\theoremstyle{plain}
\newtheorem{thm}{Theorem}[section]
\newtheorem{defn}[thm]{Definition}
\newtheorem{prop}[thm]{Proposition}
\newtheorem{cor}[thm]{Corollary}
\newtheorem{lem}[thm]{Lemma}
\theoremstyle{remark}
\newtheorem{rem}{Remark}
\numberwithin{equation}{section}
\DeclareMathOperator{\hdim}{\dim_H}
\newcommand{\Q}{\mathbb Q}
\newcommand{\N}{\mathbb N}
\newcommand{\R}{\mathbb R}
\newcommand{\lm}{\mathcal L}
\renewcommand{\hm}{\mathcal H}
\newcommand{\hc}{\mathcal H_\infty}
\newcommand{\ce}{\mathcal E}
\newcommand{\ck}{\mathcal K}
\newcommand{\cb}{\mathcal B}
\newcommand{\ca}{\mathcal A}
\newcommand{\scg}{\mathscr G}
\newcommand{\bx}{\mathbf{x}}
\newcommand{\ba}{\mathbf{a}}
\newcommand{\by}{\mathbf{y}}
\newcommand{\bz}{\mathbf{z}}
\newcommand{\qaq}{\mathrm{\quad and\quad}}
\newcommand{\be}{\bm\epsilon}
\begin{document}
		\title[Dichotomy laws for the Hausdorff measures of shrinking target sets]{Dichotomy laws for the Hausdorff measure of shrinking target sets in $\beta$-dynamical systems}
	\author{Yubin He}

	\address{Department of Mathematics, Shantou University, Shantou, Guangdong, 515063, China}
%	\address{Department of Mathematics, South China University of Technology,	Guangzhou, Guangdong 510641, P.~R.\ China}

	\email{ybhe@stu.edu.cn}

%	\author{Lingmin Liao}
%
%	\address{School of Mathematics and Statistics, Wuhan University, Wuhan, Hubei 430072, China}
%
%	\email{lmliao@whu.edu.cn}

	\subjclass[2020]{11J83, 11K60}

	\keywords{shrinking targets, Hausdorff measure, dichotomy law, Diophantine approximation.}
	\begin{abstract}
		In this paper, we investigate the Hausdorff measure of shrinking target sets in $\beta$-dynamical systems. These sets are dynamically defined in analogy to the classical theory of weighted and multiplicative approximation. While the Lebesgue measure and Hausdorff dimension theories for these sets are well-understood, the Hausdorff measure theory in even one-dimensional settings remains unknown. We show that the Hausdorff measure of these sets is either zero or full depending upon the convergence or divergence of a certain series, thus providing a rather complete measure theoretic description of these sets.
%		As an application, we provide a complete Hausdorff measure theoretic description of the set
%		\[\{(x_1,x_2)\in[0,1]^2:\|2^nx_1\|<e^{-nt}\text{ and }\|3^nx_2\|<e^{-n^2}\text{ for infinitely many $n$}\}\]
%		introduced by Li, Liao, Velani, Wang and Zorin (arXiv:2410.18578v1).
	\end{abstract}
	\maketitle
\section{Introduction}\label{s:intro}

The central question in Diophantine approximation is: how well can a given
real number $x\in[0,1)$ be approximated by rational numbers and various generalisations thereof. For a positive function $\psi:\N \to \R^+$, the classical set $W(\psi)$ of \emph{$\psi$-approximable points} is defined as the points $x\in[0,1)$ such that $|x-p/q|<\psi(q)$ for infinitely many $p/q\in\Q$.
%\[W(\psi):=\{x\in[0,1):|x-p/q|<\psi(q)\text{ for i.m.\,$p/q\in\Q$}\},\]
%where i.m.\,stands for {\em infinitely many}.
For monotonic $\psi$,
Khintchine Theorem \cite{Kh24}  and Jarn\'ik Theorem \cite{Jarnik31} provide beautiful and strikingly zero-full dichotomies for the ``size" of such sets expressed in terms of, respectively, Lebesgue and Hausdorff measures. Building on the improvements on these two theorems in \cite{BDV06,BFV16}, their modern versions can be combined into the following unifying statement. Throughout,  a continuous, nondecreasing function defined on $\R_{\ge 0}$ and satisfying $f(0)=0$ will be called a dimension function. For a dimension function $f$, the Hausdorff $f $-measure of $E$ is denoted by $\hm^f(E)$. When $f(r)=r^s$, we write $\hm^s$ in place of $\hm^f$.

\begin{thm}[Khintchine-Jarn\'ik Theorem]\label{t:KJ}
	Let $\psi:\N\to\R^+$ be a positive function and let $f$ be a dimension function such that $r^{-1}f(r)$ is decreasing. Then,
	\[\hm^f\big(W(\psi)\big)=\begin{cases}
		0&\text{if $\sum_{q=1}^{\infty}qf\big(\psi(q)\big)<\infty$},\\
		\hm^f([0,1))&\text{if $\sum_{q=1}^{\infty}qf\big(\psi(q)\big)=\infty$ and $\psi$ is monotonic}.
	\end{cases}\]
\end{thm}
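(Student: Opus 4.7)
The plan is to view
\begin{equation*}
W(\psi) \;=\; \bigcap_{N\ge 1}\bigcup_{q\ge N}\bigcup_{0\le p\le q} B(p/q,\psi(q))
\end{equation*}
as a natural $\limsup$ set of balls and to treat the two halves by separate methods: a direct covering argument in the convergent case and a transference argument in the divergent case.

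For the convergent direction I would perform a plain covering computation. The family $\{B(p/q,\psi(q)):q\ge N,\, 0\le p\le q\}$ covers $W(\psi)$, and its mesh size tends to $0$ as $N\to\infty$ since $qf(\psi(q))\to 0$ forces $\psi(q)\to 0$. The assumption that $r^{-1}f(r)$ is decreasing yields the quasi-doubling inequality $f(2r)\le 2f(r)$, so
\begin{equation*}
\hm^f\big(W(\psi)\big)\;\le\;\liminf_{N\to\infty}\sum_{q\ge N}(q+1)\,f\!\big(2\psi(q)\big)\;\le\; 4\liminf_{N\to\infty}\sum_{q\ge N}q\,f\!\big(\psi(q)\big)\;=\;0
\end{equation*}
whenever $\sum_q qf(\psi(q))<\infty$. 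Note that this half requires neither monotonicity of $\psi$ nor any structural input beyond the stated hypothesis on $f$.

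For the divergent direction, the natural modern route is the Mass Transference Principle of Beresnevich--Velani \cite{BDV06}, which, under the hypothesis that $r^{-1}f(r)$ is monotonic, converts a full Lebesgue measure statement for the $\limsup$ of $f$-enlarged balls into a full Hausdorff $f$-measure statement for the original $\limsup$. Writing $B_{p,q}=B(p/q,\psi(q))$, the $f$-enlarged ball in dimension one has radius $f(\psi(q))$, and the $\limsup$ of the enlargements is exactly $W(f\circ\psi)$. Since $\psi$ is monotonic and $f$ is nondecreasing, $f\circ\psi$ is monotonic, so the classical divergent Khintchine theorem applied to $f\circ\psi$ gives $\lm(W(f\circ\psi))=1$ precisely when $\sum_q q\,f(\psi(q))=\infty$. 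The MTP then upgrades this Lebesgue statement to $\hm^f(W(\psi))=\hm^f([0,1))$, completing the proof.

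The principal analytic obstacle is, as always, the divergent half of the classical Khintchine theorem for $f\circ\psi$: the events $\{B_{p,q}\}_{0\le p\le q}$ are not pairwise independent, so the second Borel--Cantelli lemma requires a genuine quasi-independence estimate, typically obtained through the three-distance theorem, continued fractions, or a Duffin--Schaeffer style overlap argument. Once this Lebesgue statement is in hand, the MTP does the rest as a black box and the full Hausdorff $f$-measure dichotomy follows essentially for free.
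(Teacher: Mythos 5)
Your proposal is correct and takes essentially the approach this paper has in mind: Theorem \ref{t:KJ} is stated as classical background (Khintchine, Jarn\'ik, with modern versions from \cite{BDV06,BFV16}) and is not proved in the paper, but the remark following Theorem \ref{t:rectangle} explicitly describes the same route you use for the divergence half --- the ball-to-ball mass transference principle upgrading the divergent Khintchine theorem for $f\circ\psi$ to the Hausdorff $f$-measure statement --- while your convergence half is the standard covering computation using $f(2r)\le 2f(r)$. One small correction: the mass transference principle you invoke is from \cite{BV06}, not \cite{BDV06}.
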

\noindent Within this classical setup, Theorem \ref{t:KJ} provide a complete measure theoretic description of the set of $\psi$-approximable points.

%In dynamical systems, some primordial examples exhibit that approximation properties of orbits share many similarities to those of rationals.
In 1995, Hill and Velani \cite{HiVe95} introduced a natural analogue of the classical $\psi$-approximable points from Diophantine approximation to dynamical systems, with the aim of studying the approximation properties of orbits. More precisely, consider a transformation $ T $ on a metric space $ (X,d) $. Let $ \{E_n\} $ be a sequence of subsets of $X$ with diameter going to $0$ as $ n\to\infty $. The \emph{shrinking target set} is defined by
\[W(T,\{E_n\}):=\{x\in X:T^nx\in E_n\text{ for i.m.\,$n\in\N$}\},\]
where i.m.\,stands for {\em infinitely many}.
Since their initial introduction, these sets have been investigated in various dynamical systems, with numerous authors contributing to the study of their size, expressed in terms of Lebesgue measure, Hausdorff dimension and Hausdorff measure. To name but a few, see  \cite{BK24,KZ23,LLVZ23,Phi67} for Lebesgue measure, \cite{BarRa18,HiVe95,HiVe97,HiVe99,KLR22,LLVZ23,LWWX14,SW13,Wang18,WZ21,YW23} for Hausdorff dimension and \cite{AlBa21,BK24,HiVe02,LSV07} for Hausdorff measure. The results for Hausdorff measure are considerably less developed than those for Hausdorff dimension, as Hausdorff measure can be
regarded as much ``finer" than Hausdorff dimension, with a corresponding more intricate  proof.

In this paper, we are particularly interested in the Hausdorff measure of the shrinking target sets in $\beta$-dynamical systems, aiming to establish dichotomy laws akin to  Khintchine-Jarn\'ik Theorem.

Let us start with introducing some necessary notation and definitions. By $a\ll b$ we mean there is an unspecified constant $c$ such that $a\le cb$. By $a\asymp b$ we mean $a\ll b$ and $b\ll a$. In the current work, we  endowed the unit hypercube $ [0,1]^d $ with the maximum norm $ |\cdot| $, i.e.\,for any $ \bx=(x_1,\dots, x_d)\in [0,1]^d $, $ |\bx|=\max\{|x_1|,\dots,|x_d|\} $. Therefore, a ball is corresponding to a Euclidean hypercube. The $d$-dimensional Lebesgue measure is denoted by $\lm^d$.
For two dimension functions $f$ and $g$, by $f\preceq g$
%\footnotemark[2]\footnotetext[2]{In \cite{Bu04}, }
 we mean that
\begin{equation}\label{eq:prec}
	\frac{f(y)}{g(y)}\le \frac{f(x)}{g(x)}\quad\text{for any $0<x<y$},
\end{equation}
where the unspecified constant does not depend on $x$ and $y$. Equation \eqref{eq:prec} implies that
\[\lim_{r\to 0^+}\frac{f(r)}{g(r)}>0.\]
The limit could be finite or infinite. If the limit is infinite, then we write $f\prec g$ for $f\preceq g$. Moreover, if $f(r)=r^s$ (respectively $g(r)=r^s$), then we simply write $s\preceq g$ (respectively $f\preceq s$) for $f\preceq g$.

For $\beta>1$, the $\beta$-transformation $T_\beta:[0,1)\to[0,1)$ is defined by
\[T_\beta x=\beta x\ (\textrm{mod}\ 1).\]
Analogous to the classical theory of multiplicative and weighted Diophantine approximation, the following two types of sets have been introduced in $\beta$-dynamical systems. Let $d\ge 1$ and  $1<\beta_1\le\beta_2\le\cdots\le\beta_d$. Let $\bm h=(h_1,\dots,h_d)$ be a $d$-tuple of Lipschitz functions defined on $[0,1)^d$ with Lipschitz constant $L\ge 0$, i.e.
\[|h_i(x)-h_i(y)|\le L|x-y|\quad\text{for $x,y\in[0,1)$}.\]
Let $\Psi=(\psi_1,\dots,\psi_d)$ be a $d$-tuple of functions with $\psi_i:\N\to \R^+$ for $1\le i\le d$, and $\psi:\N\to \R^+$ be a positive function.
 Define
\begin{equation}\label{eq:weighted}
	W_d(\Psi,\bm h):=\big\{\bx\in[0,1)^d:|T_{\beta_i}^nx_i-h_i(x_i)|<\psi_i(n)\text{ $(1\le i\le d)$ for i.m.\,$n$}\big\}
\end{equation}
and
\begin{equation}\label{eq:multiplicative}
	W_d^\times(\psi,\bm h):=\bigg\{\bx\in[0,1)^d:\prod_{i=1}^{d}|T_{\beta_i}^nx_i-h_i(x_i)|<\psi(n)\text{ for i.m.\,$n$}\bigg\}.
\end{equation}
 When $d=1$, $W_d(\Psi,\bm h)$ and $W_d^\times(\psi,\bm h)$ are defined exactly in the same way, and we write $W_1(\psi,h)$ for brevity. Here, the $d$-tuple of Lipschitz functions $\bm h$ enables us to handle the shrinking target sets and their variations in a unified manner. For example, if $h_i(x_i)\equiv a_i\in[0,1)$ for $1\le i\le d$, then $W_d^\times(\psi,\bm h)$ and $W_d(\Psi,\bm h)$ are the classical shrinking target sets. On the other hand, if $h_i(x_i)=x_i$ for $1\le i\le d$, then $W_d^\times(\psi,\bm h)$ and $W_d(\Psi,\bm h)$ are refered to as {\em recurrence sets}.

The Lebesgue measure and dimensional theories of these sets  are well-established through a series of studies. We refer to \cite{HLSW22,KZ23,LLVZ23,Phi67} for the Lebesgue measure and \cite{He24b,HL24,LLVWZ24,LLVZ23,TW14,Wang18,YW23} for the Hausdorff dimension.
%It is noteworthy that for $d\ge 2$ the results in Lebesgue measure theory are established only for specific two cases, when $h_i(x_i)=a_i\in[0,1)$ or $h_i(x_i)=x_i$.
However, the Hausdorff measure result remains unknown in the literature, even in the one-dimensional setting. The main results of the present paper are the following theorems, which characterize the Hausdorff $f$-measure of the sets $W_d(\Psi,\bm h)$ and $W_d^\times(\psi,\bm h)$.
\begin{thm}\label{t:rectangle}
	Let $1<\beta_1\le\cdots\le \beta_d$. Suppose that each $\beta_i$ is an integer. Let $f$ be a dimension function such that $f\preceq d$, and that for each $1\le k\le d-1$, either $k\preceq f$ or $f\preceq k$. Then,
		\[\hm^f\big(W_d(\Psi,\bm h)\big)=\begin{cases}
		0&\text{ if $\sum_{n=1}^{\infty}s_n(\Psi,f)\prod_{i=1}^{d}\beta_i^n<\infty$},\\
		\hm^f([0,1]^d)&\text{ if $\sum_{n=1}^{\infty}s_n(\Psi,f)\prod_{i=1}^{d}\beta_i^n=\infty$},
	\end{cases}\]
	where
	\[s_n(\Psi,f)=\min_{\tau\in\ca_n}\biggl\{f(\tau)\prod_{i\in\ck_{n,1}(\tau)}\frac{\beta_i^{-n}}{\tau}\prod_{i\in\ck_{n,2}(\tau)}\frac{\beta_i^{-n}\psi_i(n)}{\tau}\biggr\},\]
	and, in turn $\ca_n=\{\beta_1^{-n},\dots,\beta_d^{-n},\beta_1^{-n}\psi_1(n),\dots,\beta_d^{-n}\psi_d(n)\}$,
	\[\ck_{n,1}(\tau):=\{i:\beta_i^{-n}\le \tau\}\quad\text{and}\quad\ck_{n,2}(\tau):=\{i:\beta_i^{-n}\psi_i(n)\ge \tau\}.\]
\end{thm}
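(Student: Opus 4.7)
The plan is to establish the convergence and divergence halves of the dichotomy separately, following the Khintchine--Jarn\'ik template adapted to the product geometry of $\beta$-cylinders. For the convergence half, put $E_n := \{\bx : |T_{\beta_i}^n x_i - h_i(x_i)| < \psi_i(n),\ 1 \le i \le d\}$, so that $W_d(\Psi,\bm h) = \limsup_n E_n$. Integrality of each $\beta_i$ means $T_{\beta_i}^n$ has $\beta_i^n$ full monotone branches of slope $\pm\beta_i^n$; combining this with the Lipschitz condition on $h_i$ (which can be absorbed into a multiplicative constant once $L \ll \beta_i^n$), each $E_n$ is, up to a bounded factor, a union of $\prod_i \beta_i^n$ axis-parallel rectangles with side lengths $\beta_i^{-n}\psi_i(n)$. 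Cover $E_n$ by cubes at the scale $\tau_n \in \ca_n$ realising the minimum defining $s_n(\Psi,f)$: splitting the indices according to $\ck_{n,1}(\tau_n)$, $\ck_{n,2}(\tau_n)$ and their complement, a direction-by-direction count yields $\hc^f(E_n) \ll s_n(\Psi,f)\prod_i \beta_i^n$. Summing in $n$ and applying the Hausdorff--Cantelli lemma gives $\hm^f(W_d(\Psi,\bm h)) = 0$ whenever the test series converges.

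For the divergence half I would realise $W_d(\Psi,\bm h)$ as a limsup of rectangles and invoke a rectangular mass transference principle of Wang--Wu--Xu or Hussain--Simmons type, extended to general dimension functions and tailored to the dynamical setting. The steps are: (i) use the Lipschitz hypothesis to replace each $h_i$ at level $n$ by a suitable constant, so that $W_d(\Psi,\bm h)$ is essentially a limsup of rectangles with explicit centres (preimages of the constants under $T_{\beta_i}^n$) and sides $\beta_i^{-n}\psi_i(n)$; (ii) show that the ``inflated'' limsup (with the rectangles blown up to a canonical scale) has full Lebesgue measure, by a Chung--Erd\H{o}s-type quasi-independence argument exploiting the mixing of the product map $T_{\beta_1}\times\cdots\times T_{\beta_d}$ together with the divergence of the appropriate Lebesgue-test subseries of $\sum_n s_n(\Psi,f)\prod_i\beta_i^n$; (iii) apply the rectangular mass transference principle: the hypotheses $f \preceq d$ and ``for each $k \in \{1,\dots,d-1\}$, either $k \preceq f$ or $f \preceq k$'' ensure that at every scale $\tau \in \ca_n$ the function $f$ is comparable to a power $r^{k^{*}}$ for a well-defined integer $k^{*}(\tau) \in \{0,\dots,d\}$, so the minimiser in $s_n(\Psi,f)$ picks out the correct balanced scale and the principle delivers $\hm^f(W_d(\Psi,\bm h)) = \hm^f([0,1]^d)$.

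The main obstacle is stage (iii). As $n$ varies, the optimal scale $\tau_n$ hops among the $2d$ candidates in $\ca_n$, so the effective dimension $k^{*}(\tau_n)$ can take any value in $\{0,\dots,d\}$ and the abstract hypotheses of the rectangular mass transference principle must be verified uniformly across all regimes. Because $s_n(\Psi,f)$ is itself a minimum, a further layer of work is required to extract from divergence of the full test sum the divergence of the particular one of the $2d$ candidate subseries on which step (ii) is to be run; equivalently one has to identify, in each regime, the ``correct'' axis of collapse so that the inflated rectangles become balls of a single common radius to which the quasi-independence argument can be applied. A secondary subtlety, already present in step (i), is to guarantee that the Lipschitz perturbation preserves both the rectangular structure and the quantitative disjointness underlying the mass-distribution estimate that powers the transference.
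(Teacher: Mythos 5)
Your convergence half is essentially the paper's own argument (cover the union of level-$n$ rectangles by balls at the minimizing scale $\tau\in\ca_n$, count direction by direction according to $\ck_{n,1}(\tau)$, $\ck_{n,2}(\tau)$ and the complement, then sum), and it is fine; as in the paper it does not even need the $\beta_i$ to be integers.

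The divergence half has a genuine gap. Stage (iii) invokes ``a rectangular mass transference principle of Wang--Wu or Hussain--Simmons type, extended to general dimension functions,'' but no such principle with a hypothesis of the form $\sum_n s_n(\Psi,f)\prod_i\beta_i^n=\infty$ exists, and its absence is exactly the difficulty this paper is written to overcome: the available rectangle-to-rectangle transference results (Wang--Wu, and the refinements in He24a) only apply when, in the present notation, $s_n(\Psi,f)\prod_i\beta_i^n\ge c>0$ for infinitely many $n$, which mere divergence of the series does not guarantee. So your ``main obstacle'' is not a verification issue to be checked uniformly across regimes; it is a missing theorem, and appealing to it is circular. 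A further error in (iii): the hypotheses $f\preceq d$ and ``$k\preceq f$ or $f\preceq k$'' do not make $f$ comparable to a power $r^{k^{*}}$ --- they only fix the direction of monotonicity of $r\mapsto f(r)/r^{k}$ (the paper's own example $f(r)=r^{s}/\sqrt{|\log r|}$ is comparable to no power) --- so the reduction you describe is not available even heuristically.

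What is missing, concretely, is the construction the paper uses in place of any off-the-shelf transference principle. After disposing of the case $\lim_{r\to0^+}f(r)/r^{d}<\infty$ via the Lebesgue zero--one law of Kleinbock--Wang, the paper restricts to a subsequence $P$ of $n$ with $n^{-2}\le s_n\prod_i\beta_i^n\le 1$ (and with the $\psi_i$ ordered inside each block of equal $\beta_i$), defines an index $m(n)$ and an intermediate scale $\omega_n$ from the minimum defining $s_n$, and builds an auxiliary tuple $\Phi$ whose Lebesgue test series is $\asymp\sum_{n\in P}s_n\prod_i\beta_i^n=\infty$; this produces a full-Lebesgue-measure limsup of balls $B(\by,\omega_n)$. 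Inside each such ball the original $\Psi$-rectangles form a separated array, and a three-case mass-distribution estimate (using the $k\preceq f$ versus $f\preceq k$ dichotomy separately on each range of radii delimited by the scales in $\ca_n$) gives $\hc^f\big(B(\by,\omega_n)\cap\bigcup\text{rectangles}\big)\gg\lm^d\big(B(\by,\omega_n)\big)$, after which the content criterion of He24a (the paper's Theorem 2.3) yields $\hm^f(W_d(\Psi,\bm h))=\hm^f([0,1]^d)$. Your phrase ``inflated to a canonical scale'' is precisely where this choice of $\omega_n$, $\Phi$ and the subsequent local content bound must be supplied; without them the argument does not close.
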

\begin{rem}
	Note that the classical set $W(\psi)$ is a  $\limsup$ set of balls. The mass transference principle from balls to balls by Beresnevich and Velani \cite{BV06} allows us to transfer Lebesgue measure statements for a $\limsup$ set of balls (Khintchine Theorem) to Hausdorff measure statements for the shrunk $\limsup$ set of balls (Jarn\'ik Theorem). That is, Khintchine Theorem indeed implies Jarn\'ik  Theorem. For the set $W_d(\Psi,\bm h)$, by Lemma \ref{l:length}, it can be almost expressed as a $\limsup$ set defined by hyperrectangles. Although the Lebesgue measure for $W_d(\Psi,\bm h)$ is already established, little is known about its Hausdorff measure, since there is currently no general principle `from rectangles to rectangles' that allows us to transfer Lebesgue measure statements for a $\limsup$ set of hyperrectangles to Hausdorff measure statements for the corresponding shrunk $\limsup$
	set of hyperrectangles. Besides, establishing such a principle appears challenging unless stronger conditions are imposed. As explained in \cite[\S 4]{WW21}, one of the major challenges is the necessity of considering an optimal covering for the entire collection of hyperrectangles that define the $\limsup$ set, rather than treating each one individually. In \cite[Theorem 3.2]{WW21} and \cite[Theorems 2.8 and 2.9]{He24a}, the authors showed that $\hm^s(W_d(\Psi,\bm h))=\hm^s([0,1]^d)=\infty$ with $s$ as the Hausdorff dimension of $W_d(\Psi,\bm h)$, based on the fact, expressed in our setting as
	\[\text{$s_n(\Psi,f)\prod_{i=1}^{d}\beta_i^n\ge c>0$ for i.m.\,$n$}.\]
	Obviously, the divergence of the series in Theorem \ref{t:rectangle} does not guarantee this properties, indicating that their approach is not fully applicable to our setting. To overcome this barrier, we combine their method with a new one based on some geometric observations (see Section \ref{ss:observation}).
 Our method cannot be extended to cases where  $\beta$ is non-integer. As discussed above, addressing the optimal cover for a collection of hyperrectangles potentially requires that these hyperrectangles be well-distributed, a property may not necessarily be satisfied in the non-integral case.
\end{rem}

In \cite{LLVWZ24}, Li, Liao, Velani, Wang and Zorin provided an example of the set \[W_2^*(t):=\{\bx\in[0,1)^2:|T_2^nx_1|<e^{-nt}\text{ and }|T_3^nx_2|<e^{-n^2}\text{ for i.m.\,$n$}\}\]
that cannot be derived from the mass transference principle from `rectangles to rectangles' by Wang and Wu \cite{WW21}, which is part of the starting point of their paper. A consequence of their main result implies that
\begin{equation}\label{eq:hdimW}
	\hdim W_2^*(t)=\min\bigg(1,\frac{\log 2+\log 3}{\log 2+t}\bigg),
\end{equation}
where $\hdim$ stands for the Hausdorff dimension. As noted in \cite[Remark 14]{LLVWZ24}, their result does not imply the Hausdorff measure of $W_2^*(t)$. Further improvements were given by the author in \cite{He24a}, where we showed that
\[\hm^s\big(W_2^*(t)\big)=\hm^s([0,1]^2)=\infty\quad\text{with $s=\hdim W_2^*(t)$.}\]
A direct consequence of Theorem \ref{t:rectangle} allows us to provide a rather complete Hausdorff measure theoretic description of $W_2^*(t)$.
\begin{thm}\label{t:example}
	Let $f$ be a dimension function such that $f\preceq 2$ and either $1\preceq f$ or $f\preceq 1$. Then, the following statements hold.
	\begin{enumerate}[(1)]
		\item Suppose that $t>\log 3$. If $1\preceq f$, then
		\[\hm^f\big(W_2^*(t)\big)=0.\]
		Conversely, if $f\prec 1$, then
		\[\hm^f\big(W_2^*(t)\big)=\begin{cases}
			0&\text{ if $\sum_{n=1}^{\infty}f(2^{-n}e^{-nt})6^n<\infty$},\\
			\hm^f([0,1]^2)&\text{ if  $\sum_{n=1}^{\infty}f(2^{-n}e^{-nt})6^n=\infty$}.
		\end{cases}\]
		\item Suppose that $t\le\log 3$. If $f\prec 1$, then
		\[\hm^f\big(W_2^*(t)\big)=\hm^f([0,1]^2)=\infty.\]
		Conversely, if $1\preceq f\preceq (1+\log 2/\log 3)$, then
		\[\begin{split}
			\hm^f\big(W_2^*(t)\big)
			=\begin{cases}
				0&\text{ if $\sum_{n=1}^{\infty}f(3^{-n}e^{-n^2})9^ne^{n^2-nt}<\infty$},\\
				\hm^f([0,1]^2)&\text{ if $\sum_{n=1}^{\infty}f(3^{-n}e^{-n^2})9^ne^{n^2-nt}=\infty$}.
			\end{cases}
		\end{split}\]
	\end{enumerate}
\end{thm}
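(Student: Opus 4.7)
\emph{Proof plan.} The plan is to recognise $W_2^*(t)$ as the set $W_2(\Psi,\bm 0)$ from \eqref{eq:weighted} obtained by taking $\beta_1 = 2$, $\beta_2 = 3$, $\psi_1(n) = e^{-nt}$, $\psi_2(n) = e^{-n^2}$, and then to invoke Theorem~\ref{t:rectangle}. Since $\ca_n = \{2^{-n},\,3^{-n},\,2^{-n}e^{-nt},\,3^{-n}e^{-n^2}\}$ has four elements, the quantity $s_n(\Psi,f)\cdot 6^n$ is the minimum of four explicit candidate expressions obtained by substituting each $\tau \in \ca_n$ (the formula depending on whether $t \leq \log(3/2)$, which controls the sets $\ck_{n,1}(\tau),\ck_{n,2}(\tau)$). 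The remainder of the argument identifies, in each of the four parameter regimes of the statement, which candidate attains or controls the minimum and reads off the resulting series.

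The sub-case of Case 1 with $1 \preceq f$ does not require Theorem~\ref{t:rectangle}: $1 \preceq f$ forces $f(r) \leq C r$ near zero, whence $\hm^f(E) \leq C\,\hm^1(E)$ for every set $E$; by \eqref{eq:hdimW}, $\hdim W_2^*(t) = \log 6/(\log 2 + t) < 1$ when $t > \log 3$, so $\hm^1(W_2^*(t)) = 0$ and hence $\hm^f(W_2^*(t)) = 0$. For the two sub-cases with $f \prec 1$, I would apply Theorem~\ref{t:rectangle} and exploit the super-linearity $f(r)/r \to \infty$. In Case 1 ($t > \log 3$), the monotonicity $f \preceq 1$ gives $f(2^{-n}e^{-nt})/f(3^{-n}e^{-n^2}) \leq (2^{-n}e^{-nt})/(3^{-n}e^{-n^2})$, so the candidate at $\tau = 2^{-n}e^{-nt}$ is $\leq$ the one at $\tau = 3^{-n}e^{-n^2}$; meanwhile $f(r)/r \to \infty$ forces the candidates at $\tau = 2^{-n}$ and $\tau = 3^{-n}$ to exceed $M\cdot 2^n$ for any $M$ and large $n$. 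A branching argument then shows that convergence of $\sum f(2^{-n}e^{-nt})\cdot 6^n$ is equivalent to convergence of $\sum s_n\cdot 6^n$. In Case 2 ($t \leq \log 3$), the bound $3^n e^{-nt} \geq 1$ combined with $f(r)/r \to \infty$ forces every candidate to tend to infinity with $n$, so $\sum s_n\cdot 6^n = \infty$ and Theorem~\ref{t:rectangle} yields $\hm^f(W_2^*(t)) = \hm^f([0,1]^2) = \infty$ (the last equality because $f \prec 1$ implies $f \prec 2$).

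The main technical case is Case 2 with $1 \preceq f \preceq 1 + \log 2/\log 3$, where the relevant candidate is the fourth, $f(3^{-n}e^{-n^2})\cdot 9^n e^{n^2 - nt}$. Using $1 \preceq f$ — which yields $f(3^{-n})/f(3^{-n}e^{-n^2}) \geq e^{n^2}$ — the candidates at $\tau = 2^{-n}$ and $\tau = 3^{-n}$ are each $\gtrsim$ the fourth candidate up to constants. The main obstacle is the comparison with the candidate at $\tau = 2^{-n}e^{-nt}$: the bound from $1 \preceq f$ alone only gives a factor $(2/3)^n e^{nt}$, which fails to be uniformly $\geq 1$ when $t < \log(3/2)$. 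The upper bound $f \preceq 1 + \log 2/\log 3$ is used here to control the small-scale behaviour of $f$; combined with a branching argument analogous to that of Case 1 (distinguishing whether $f$ is effectively linear at small scales or strictly of higher order), it shows that divergence of $\sum f(3^{-n}e^{-n^2})\cdot 9^n e^{n^2 - nt}$ forces $\sum s_n\cdot 6^n = \infty$. The convergence direction is immediate since $s_n\cdot 6^n \leq$ fourth candidate. Combining all regimes yields the claimed dichotomy.
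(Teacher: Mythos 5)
Your plan follows the paper's route: realise $W_2^*(t)$ as $W_2(\Psi,\bm h)$ with $\beta_1=2$, $\beta_2=3$, $\psi_1(n)=e^{-nt}$, $\psi_2(n)=e^{-n^2}$, feed it into Theorem \ref{t:rectangle}, and simplify the four-candidate minimum in each regime. Your treatment of $t>\log 3$ matches the paper, and your treatment of the subcase $t\le\log 3$, $f\prec 1$ (every candidate tends to infinity, so the divergence part of Theorem \ref{t:rectangle} applies directly and $\hm^f([0,1]^2)=\infty$ since $f\prec 2$) is correct and in fact avoids the paper's appeal to \cite{He24a} for $\hm^1\big(W_2^*(t)\big)=\infty$.

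The gap is in the main case $1\preceq f\preceq 1+\log 2/\log 3$, $t\le\log 3$, in the regime $2^{-n}e^{-nt}\ge 3^{-n}$ (i.e.\ $t\le\log(3/2)$). First, your claim that the candidates at $\tau=2^{-n}$ and $\tau=3^{-n}$ dominate the fourth candidate is false at $\tau=2^{-n}$: from $1\preceq f$ one only gets the ratio bound $e^{n(t-\log(3/2))}$, and for the admissible function $f(r)=r$ with $t<\log(3/2)$ the candidate at $\tau=2^{-n}$ times $6^n$ equals $2^n$ while the fourth times $6^n$ equals $e^{n(\log 3-t)}>2^n$, so the first candidate is exponentially \emph{smaller}. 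Second, and more seriously, the acknowledged obstacle at $\tau=2^{-n}e^{-nt}$ is exactly where the proof has to be won, and your sketch replaces it by an unspecified branching on whether $f$ is ``effectively linear or of higher order''. That dichotomy does not close the case: the problematic configuration is precisely near-linear $f$ with $t<\log(3/2)$, where both the first and third candidates are exponentially smaller than the fourth, and knowing merely that each candidate's own series diverges is not enough, since $\sum\min\{a_n,b_n\}$ can converge even when $\sum a_n=\sum b_n=\infty$. What is needed (and what the paper proves) is a \emph{uniform} lower bound: by $1\preceq f$ together with $2^{-n}e^{-nt}\ge 3^{-n}$ one has $f(2^{-n}e^{-nt})\frac{3^{-n}}{2^{-n}e^{-nt}}\ge f(3^{-n})$, and by $f\preceq 1+\log 2/\log 3$ one has $f(3^{-n})\ge f(1)6^{-n}$; hence the third candidate times $6^n$ is bounded below by the positive constant $f(1)$, and since the first candidate dominates the third (again by $1\preceq f$) and the second dominates the fourth, the minimum times $6^n$ is for every $n$ either the fourth candidate's term or at least a fixed constant, after which your branching argument does give the stated equivalence. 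These two inequalities are absent from your plan, so as written the decisive step of the hardest subcase is missing. (The complementary regime $2^{-n}e^{-nt}<3^{-n}$ is the easy one, where $1\preceq f$ alone makes all other candidates dominate the fourth.)
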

\begin{rem}
	The condition $f\preceq (1+\log 2/\log 3)$ in item (2) is less restrictive, given that $\hdim W_2^*(t)\le 1$ (see \eqref{eq:hdimW}). Let $f$ be a dimension function defined by $f(r)=r^s/\sqrt{|\log r|}$, where $s=\hdim W_2^*(t)$. Then, $s\prec f$. By Theorem \ref{t:example}, it is easily verified that
	\[\hm^f\big(W_2^*(t)\big)=\hm^f([0,1]^2)=\infty,\]
	which cannot be covered by any of the previous results.
\end{rem}

For the set $W_d^\times(\psi,\bm h)$, $\beta_i$ is allowed to be a non-integer.
\begin{thm}\label{t:multiplicative}
Let $1<\beta_1\le\cdots\le \beta_d$.
\begin{enumerate}[(1)]
	\item  Assume that $d=1$ and $g$ is a dimension function such that $g\preceq 1$. Write $\beta=\beta_1$. Then,
	\[\hm^g\big(W_1(\psi,h)\big)=\begin{cases}
		0&\text{ if $\sum_{n=1}^{\infty}\beta^{n}g\big(\beta^{-n}\psi(n)\big)<\infty$},\\
		\hm^g([0,1])&\text{ if $\sum_{n=1}^{\infty}\beta^{n}g\big(\beta^{-n}\psi(n)\big)=\infty$}.
	\end{cases}\]
	\item  Assume that $d\ge 2$ and $f$ is a dimension function such that $(d-1)\prec f\preceq s$ for some $s\in(d-1,d)$. Then,
\end{enumerate}
	\[\hm^f\big(W_d^\times(\psi,\bm h)\big)=\begin{cases}
		0&\text{ if $\sum_{n=1}^{\infty}\beta_d^{dn}\psi(n)^{-d+1}f\big(\beta_d^{-n}\psi(n)\big)<\infty$},\\
		\hm^f([0,1]^d)&\text{ if $\sum_{n=1}^{\infty}\beta_d^{dn}\psi(n)^{-d+1}f\big(\beta_d^{-n}\psi(n)\big)=\infty$}.
	\end{cases}\]
\end{thm}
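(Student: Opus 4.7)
For part (1), the convergent half is a Hausdorff--Cantelli estimate: writing $W_1(\psi,h)\subseteq\limsup_n R_n$ with $R_n=\{x\in[0,1):|T_\beta^n x-h(x)|<\psi(n)\}$, the Lipschitz condition on $h$ forces $R_n$ to be a union of at most $\beta^n$ intervals of length $\ll\beta^{-n}\psi(n)$, so $\sum_n\hm^g(R_n)\ll\sum_n\beta^n g(\beta^{-n}\psi(n))<\infty$ implies $\hm^g(W_1(\psi,h))=0$. For the divergent half I split on the growth of $g$: when $g\asymp r$, divergence of the $g$-series forces $\sum_n\psi(n)=\infty$, and the Khintchine-type theorem for $T_\beta$ yields $\lm^1(W_1(\psi,h))=1$, equivalent in this regime to $\hm^g$-full; when $g\prec 1$, a Jarn\'ik-type Cantor construction supported on full cylinders of $T_\beta$ (to recover uniform geometry in the non-integer case) produces a subset of $W_1(\psi,h)$ of infinite $\hm^g$-measure, matching $\hm^g([0,1))=\infty$.

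For part (2), convergence, fix $n$ and a full cylinder $C$ of $(T_{\beta_1},\dots,T_{\beta_d})$ at level $n$, whose sides are $\beta_i^{-n}$ in direction $i$. Rescaling via $z_i=\beta_i^n(x_i-c_i)$ turns $R_n\cap C$ into the hyperbolic slice $\{|z_1\cdots z_d|<\psi(n)\}\cap[-1,1]^d$. I decompose this slice into dyadic rectangles indexed by $(j_1,\dots,j_{d-1})$ with sides $2^{-j_i}$ for $i<d$ and $\psi(n)\cdot 2^{j_1+\cdots+j_{d-1}}$ in the $d$-th coordinate, rescale back, and cover each resulting rectangle by balls of radius equal to its shortest side, namely $\asymp\beta_d^{-n}\psi(n)\cdot a$ with $a:=2^{j_1+\cdots+j_{d-1}}\ge 1$. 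The hypothesis $f\preceq s$ with $s<d$ gives $f(\beta_d^{-n}\psi(n)\cdot a)\le a^s f(\beta_d^{-n}\psi(n))$, so the sum over $a$ is a convergent geometric series in $a^{s-d}$. Summing over all cylinders yields $\hm^f(R_n)\ll\beta_d^{dn}\psi(n)^{1-d}f(\beta_d^{-n}\psi(n))$, hence $\hm^f(W_d^\times(\psi,\bm h))=0$ whenever the target series converges.

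For part (2), divergence, the key observation is the trivial product inclusion
\[
[0,1)^{d-1}\times W_1(\psi,h_d)\subseteq W_d^\times(\psi,\bm h),
\]
valid because $|T_{\beta_i}^n x_i-h_i(x_i)|<1$ for every $i<d$. Define $g(r):=f(r)/r^{d-1}$; the hypotheses $(d-1)\prec f\preceq s<d$ ensure that $g$ is a dimension function with $g\prec 1$, and a direct computation verifies
\[
\sum_n\beta_d^n g(\beta_d^{-n}\psi(n))=\sum_n\beta_d^{dn}\psi(n)^{1-d}f(\beta_d^{-n}\psi(n)).
\]
Divergence of the target series, combined with part (1) applied to $g$ and $W_1(\psi,h_d)$, gives $\hm^g(W_1(\psi,h_d))=\hm^g([0,1))=\infty$. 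A Marstrand-type product inequality $\hm^f(A\times B)\gg\hm^{d-1}(A)\hm^g(B)$ (valid for $f(r)=r^{d-1}g(r)$), applied with $A=[0,1)^{d-1}$ and $B=W_1(\psi,h_d)$, then forces $\hm^f(W_d^\times(\psi,\bm h))\ge\hm^f([0,1)^{d-1}\times W_1(\psi,h_d))=\infty=\hm^f([0,1]^d)$, the desired full-measure conclusion.

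The main technical obstacles are (i) the Jarn\'ik-type argument for part (1) divergence when $g\prec 1$, where the non-integer case requires restricting to full cylinders of $T_\beta$ to recover uniform geometry, and (ii) organizing the dyadic cover in part (2) convergence so that summation over cylinders and dyadic scales matches exactly the series stated in the theorem, with $f\preceq s<d$ providing the essential geometric decay. The divergence direction of part (2) is comparatively soft once the slicing reduction to the one-dimensional case $W_1(\psi,h_d)$ is in place.
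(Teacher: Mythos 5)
Your strategy tracks the paper's on all four steps. The convergence half of (1) is the Hausdorff--Cantelli bound exactly as in the paper; the divergence halves of (1) and (2) also match, with your Marstrand-type product inequality applied with $A=[0,1)^{d-1}$ interchangeable with the Slicing Lemma~\ref{l:slicing}. For (1) divergence, though, you should be aware that the ``Khintchine-type theorem'' has to be a \emph{full-cylinder} version, namely Lemma~\ref{l:onedim} for the auxiliary set $\tilde W_1(\psi,h)$, which the paper proves from scratch via a Chung-Erd\"os second-moment estimate; and the subsequent ``Jarn\'ik-type'' step is executed by inflating $\psi(n)$ to $\beta^n g(\beta^{-n}\psi(n))$ (using $r\le g(r)$ from $g\prec 1$) so that the small balls of radius $\asymp\beta^{-n}\psi(n)$ lie inside Lebesgue-generic balls of radius $\asymp g(\beta^{-n}\psi(n))$ with matching $\hm^g$-content. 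Your outline is plausible but leaves both of these nontrivial inputs implicit.

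The genuine gap is in your reconstruction of the covering for (2) convergence. After rescaling the hyperboloid slice $\{|z_1\cdots z_d|<\psi(n)\}\cap[-1,1]^d$ back to $x$-coordinates, the dyadic rectangle indexed by $(j_1,\dots,j_{d-1})$ has $x$-sides $\beta_i^{-n}2^{-j_i}$ for $i<d$ and $\beta_d^{-n}\min\{1,\psi(n)a\}$ with $a=2^{\sum j_i}$. Your claim that the shortest side is $\asymp\beta_d^{-n}\psi(n)a$ is false once some $j_i$ is large enough that $\beta_i^{-n}2^{-j_i}<\beta_d^{-n}\psi(n)a$ (for $d=2$, $\beta_1=\beta_2$, this happens for every $j_1>\tfrac{1}{2}\log_2\psi(n)^{-1}$, which is well within the relevant range), and it does not account for the near-axis slabs $\psi(n)a>1$ where the $z_d$-range saturates. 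In both regimes your formula undercounts the number of covering balls, so the resulting Hausdorff $f$-content bound is not justified as stated. The paper avoids this by invoking the Hussain--Simmons covering lemma (Lemma~\ref{l:HScovering}): that lemma produces a cover of the hyperboloid in $z$-coordinates by isotropic balls $B$ with $|B|\ge\delta$ and $\sum_{B}|B|^s\ll\delta^{s-d+1}$; such balls pull back to $x$-rectangles whose aspect ratio is exactly $\beta_i^{-n}:\beta_d^{-n}$, so the shortest side is always $\beta_d^{-n}|B|$ and no per-rectangle case analysis is needed. You should either cite that lemma or replace your dyadic sketch with a construction that faithfully reproduces both of its conclusions.
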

\begin{rem}
	The main ingredient of Theorem \ref{t:multiplicative} is the dichotomy law for the Hausdorff measure of $W_1(\psi,h)$, which has not been proven in the literature. For $d\ge 2$, the convergence part can be proved by combining the methods from \cite{HS18} and \cite{LLVZ23}. For the divergence part, the key is to determine the Hausdorff measure of the set $W_1(\psi,h)$ and then apply the Slicing Lemma (see Lemma \ref{l:slicing}) to complete the proof. It was shown in \cite{Phi67} that the Lebesgue measure of $W_1(\psi,h)$ satisfies a zero-one law, depending on whether the series $\sum_{n=1}^\infty \psi(n)$ converges or not. For integral $\beta>1$, the mass transfer principle by Beresnevich and Velani \cite{BV06} can be used to determine the Hausdorff measure of $W_1(\psi,h) $. However, this principle cannot be directly applied to non-integeral $\beta $, because the corresponding $\beta$-transformation is generally not a Markov map.
\end{rem}

	The structure of the paper is as follows. In the next section, we recall several notions and properties of Hausdorff measure and content in Section \ref{ss:Hausdorff}, as well as those of $\beta$-transformation in Section~\ref{ss:beta}. The proof of the convergence part of Theorem \ref{t:rectangle} is established in a separate section in Section \ref{s:converrec} without assuming each $\beta_i$ to be an integer. Based on crucial geometric observations (see Section \ref{ss:observation}), the proof of the complementary divergence part is presented in Section \ref{s:diverrec}. In Section \ref{s:example}, we prove Theorem \ref{t:example}. In Section \ref{s:onedim}, we establish the dichotomy law for the Hausdorff measure of $W_1(\psi,h)$.  Finally, the dichotomy law is applied in the last section to complete the proof of Theorem \ref{t:multiplicative} (2).

	\section{Preliminaries}\label{s:preliminaries}
	\subsection{Hausdorff measure and content}\label{ss:Hausdorff}
	Let $f$ be a dimension function. For a set $A\subset \R^d$ and $\eta>0$, let
	\[\mathcal H_\eta^f(A)=\inf\bigg\{\sum_{i}f(|B_i|):A\subset \bigcup_{i\ge 1}B_i, \text{ where $B_i$ are balls with $|B_i|\le \eta$}\bigg\},\]
	where $|E|$ denotes the diameter of $E$.
	The {\em Hausdorff $f$-measure} of $A$ is defined as
	\[\hm^f(A):=\lim_{\eta\to 0}\mathcal H_\eta^f(A).\]
	When $\eta=\infty$, $\hc^f(A)$ is referred to as {\em Hausdorff $f$-content} of $A$.

	% We say that a function $f\colon \R^+\to\R^+$ is {\em doubling} if there exists a constant $D>1$ such that
	%\begin{equation}\label{eq:doubling}
	%	f(2r)<D f(r)\qquad\text{for all $r>0$}.
	%\end{equation}
	%For a doubling dimension function $g$, if $f\preceq g$, then $f$ is also doubling, since
	%\[\frac{f(2r)}{g(2r)}\le \frac{f(r)}{g(r)}\quad\Longrightarrow\quad f(2r)\le \frac{f(r)g(2r)}{g(r)}\le Df(r),\]

	In \cite{He24a}, the author introduced a class of $G_\delta$-subsets satisfying a certain Hausdorff $f$-content condition, and subsequently showed that any $G_\delta$-subset in this class has full Hausdorff $f$-measure and large intersection property.
	\begin{defn}[{\cite[Definition 2.3]{He24a}}]\label{d:LIP}
		Let $f$ be a dimension function. We define $\scg^f([0,1]^d)$ to be the class of $G_\delta$-subsets $A$ of $[0,1]^d$ such that there exists a constant $c>0$,
		\[\hc^f(A\cap B)>c\hc^f(B)\quad\text{holds for any ball $B\subset [0,1]^d$}.\]
	\end{defn}
	The class $\scg^f([0,1]^d)$ has the following properties.
	\begin{thm}[{\cite[Theorem 2.4]{He24a}}]\label{t:LIPfull}
		Let $f$ be a dimension function such that $f\preceq d$. The class $\scg^f([0,1]^d)$ is closed under countable intersection. Moreover, for any $A\in\scg^f([0,1]^d)$, we have
		\[\hm^f(A)=\hm^f([0,1]^d).\]
	\end{thm}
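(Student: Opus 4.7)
The plan is to establish the two assertions in sequence. Part one (closure under countable intersection) is the technical core; once available, part two follows from the density condition via a covering argument.

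For \emph{closure under countable intersection}, let $A_1, A_2, \ldots \in \scg^f([0,1]^d)$. Since each $A_i$ is $G_\delta$, so is $A_\infty := \bigcap_i A_i$; write $A_i = \bigcap_k U_{i,k}$ with $\{U_{i,k}\}_k$ a decreasing sequence of open sets. Fix a ball $B_0 \subset [0,1]^d$; the goal is a universal constant $c > 0$ with $\hc^f(A_\infty \cap B_0) \geq c\,\hc^f(B_0)$. I would run an inductive Cantor-type construction, maintaining at stage $n$ a finite pairwise-disjoint family $\mathcal{F}_n$ of small closed balls, each contained in $U_{i,k}$ for every $(i,k)$ with $i+k \leq n$, and with total $f$-content at least $c^{(n)} \hc^f(B_0)$ where $c^{(n)}$ decreases but stays bounded below. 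Passing from stage $n$ to stage $n+1$ uses the $\scg^f$-condition applied to the newly introduced $A_{i_0}$ inside each $B \in \mathcal{F}_n$: this yields an efficient ball-cover of $A_{i_0} \cap B$; after a Vitali-type refinement one may assume the cover balls are centred at points of $A_{i_0} \cap B \subset U_{i_0, k_0}$, so that by shrinking their radii they lie inside $U_{i_0, k_0}$ at only a controlled multiplicative cost in $f$-content. The limit set $K := \bigcap_n \bigcup_{B \in \mathcal{F}_n} B$ is a compact subset of $A_\infty \cap B_0$ with $\hc^f(K) \geq c\,\hc^f(B_0)$ by a standard Cantor-covering argument.

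For \emph{full Hausdorff $f$-measure}, note that $\hc^f(B) \asymp f(|B|)$ for balls when $f \preceq d$, so the density condition upgrades to the uniform bound $\hm^f(A \cap B) \geq \hc^f(A \cap B) \geq c' f(|B|)$ for all balls $B$. I would split into two cases according to $\hm^f([0,1]^d)$. If $\hm^f([0,1]^d) = \infty$, take an almost-disjoint cover $\{B_j\}$ of $[0,1]^d$ by balls of diameter at most $\eta$; Borel additivity gives
\[\hm^f(A) \geq \sum_j \hm^f(A \cap B_j) \geq c' \sum_j f(|B_j|),\]
and the right-hand side can be made arbitrarily large as $\eta \to 0$, forcing $\hm^f(A) = \infty$. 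If $\hm^f([0,1]^d) < \infty$, then $f(r) \asymp r^d$ and $\hm^f$ is comparable to $\lm^d$ on $[0,1]^d$; the density bound reads $\lm^d(A \cap B) \geq c'' \lm^d(B)$ uniformly over balls, and the Lebesgue density theorem forces $\lm^d(A^c) = 0$, hence $\hm^f(A) = \hm^f([0,1]^d)$.

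The main obstacle is the Cantor refinement in the closure step. The hypothesis only provides efficient ball-coverings of $A_i \cap B$, not sub-balls of $A_i$ or of its open approximations $U_{i,k}$. The delicate part is converting such coverings into balls that genuinely lie inside $U_{i_0, k_0}$ while tracking the multiplicative loss of $f$-content at each of the infinitely many stages, so that the product $\prod_n c^{(n)}/c^{(n-1)}$ stays bounded below and produces a uniform universal constant $c > 0$.
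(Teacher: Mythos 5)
First, a point of reference: the present paper does not prove Theorem \ref{t:LIPfull}; it is quoted verbatim from \cite{He24a}, so there is no internal proof to compare against and your argument has to stand on its own. Your second half does: under $f\preceq d$ one indeed has $\hc^f(B)\asymp f(|B|)$ for balls, the almost-disjoint grid cover gives $\hm^f(A)\gg \eta^{-d}f(\eta)\to\infty$ when $\hm^f([0,1]^d)=\infty$, and in the complementary case $f(r)\asymp r^d$, so the density condition plus the Lebesgue density theorem forces $\lm^d([0,1]^d\setminus A)=0$. That part is essentially correct and only uses the defining condition for the single set $A$.

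The genuine gap is in the countable-intersection part, which is the technical core. Two problems. (a) Definition \ref{d:LIP} gives a \emph{lower bound} on $\hc^f(A_{i}\cap B)$, i.e., a constraint that every cover must satisfy; it does not ``yield an efficient ball-cover of $A_{i}\cap B$''. What your construction actually needs is the reverse extraction: from the content lower bound, a finite \emph{disjoint} family of balls, each contained in the open set $U_{i,k}\cap B$, carrying a definite proportion of $f(|B|)$, with separation good enough that the limiting Cantor measure satisfies the hypothesis of Proposition \ref{p:MDP} for \emph{arbitrary} balls. That is a nontrivial lemma and it is nowhere supplied. (b) More decisively, the constant $c$ in Definition \ref{d:LIP} depends on the set: if $A_i$ has constant $c_i$, the loss incurred at the stage where $A_i$ is first incorporated is proportional to $c_i$, so the cumulative factor in your construction is comparable to $\prod_i c_i$, which may well be $0$ when $c_i\to0$. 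You identify this as ``the delicate part'' but give no mechanism to prevent it, and a naive stage-by-stage Cantor scheme on the raw definition does not close this. This is precisely why the known proofs of statements of this type (Falconer's classes $\scg^s$, Durand's gauge-function generalisation, and the argument in \cite{He24a}) first upgrade the density condition to an equivalent formulation carrying an absolute constant --- e.g., via dyadic net contents together with a self-improvement lemma, or via the family of auxiliary gauges $g\prec f$ --- and only then intersect, so that no constant is lost per set. Without such an upgrading step, your sketch does not produce a single $c>0$ valid for $\bigcap_i A_i$, and the closure assertion remains unproved.
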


	In the same paper, the author gave two conditions that are much easier to verify whether a set belongs to $\scg^f([0,1]^d)$.

	\begin{thm}[{\cite[Theorem 2.5 and Corollary 2.6]{He24a}}]\label{t:weaken}
		Let $f$ be a dimension function such that $f\preceq d$. Assume that $\{B_n\}$ is a sequence of balls in $[0,1]^d$ with radii tending to 0, and that $\lm^d(\limsup B_n)=1$. Let $\{E_n\}$ be a sequence of open sets (not necessarily contained in $B_n$). Then,
		\[\limsup_{n\to\infty} E_n\in\scg^f([0,1]^d)\]
		if one of the following holds:
		\begin{enumerate}[(1)]
			\item $E_n\subset B_n$ for all $n\ge 1$. Moreover, there exists a constant $c>0$ such that for any $n\ge 1$,
			\[\hc^f (E_n)>c\lm^d(B_n).\]
			\item  There exists a constant $c>0$ such that for any $B_k$,
			\[\limsup_{n\to\infty} \hc^f (E_n\cap  B_k)>c\lm^d(B_k).\]
		\end{enumerate}
	\end{thm}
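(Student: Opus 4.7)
The plan is to verify, under either hypothesis, the defining inequality of $\scg^f([0,1]^d)$ for $A := \limsup_n E_n$: for every ball $B \subset [0,1]^d$,
\[\hc^f(A \cap B) \;\geq\; c' \hc^f(B)\]
with a constant $c'$ independent of $B$. Writing $A_N := \bigcup_{n \geq N} E_n$ and noting that $A = \bigcap_{N} A_N$ is a $G_\delta$, Theorem \ref{t:LIPfull} reduces the task to showing $A_N \in \scg^f([0,1]^d)$ with a constant uniform in $N$.

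Under condition (2) the building block is immediate: for any ball $B_k$ in the sequence and any $N$, there is an index $n \geq N$ with $\hc^f(E_n \cap B_k) \gtrsim \lm^d(B_k)$. Under condition (1) the analogous statement is obtained by a Vitali extraction: using $\lm^d(\limsup B_n) = 1$, one selects a countable disjoint subfamily $\{B_{n_j}\}$ with $n_j \geq N$, $B_{n_j} \subset B_k$ and $\sum_j \lm^d(B_{n_j}) \gtrsim \lm^d(B_k)$; the pairwise disjoint sets $E_{n_j} \subset B_{n_j}$ then provide, inside $A_N \cap B_k$, a family whose total $f$-content is $\gtrsim \lm^d(B_k)$. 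In either case, each sequence ball $B_k$ carries a local ``$f$-mass reservoir'' inside $A_N$ of size comparable to $\lm^d(B_k)$.

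The principal technical step is to convert these local reservoirs into the global inequality $\hc^f(A_N \cap B) \gtrsim f(|B|)$ for every ball $B \subset [0,1]^d$. My plan is a multi-scale Cantor-type construction inside $A_N \cap B$, in the spirit of the mass transference principle: first cover a positive fraction of $B$ by a disjoint family of sub-balls $B_{k_i}$ from the sequence, each carrying the above reservoir; inside each such sub-ball, iterate with a strictly larger index threshold; continue through infinitely many nested generations to produce a compact Cantor limit set contained in $A_N \cap B$. Equip this limit set with a mass distribution $\mu$ in which the mass assigned to each generation-$g$ ball of diameter $r_g$ is calibrated by the ratio $f(r_g)/r_g^d$, a ratio that is essentially monotone in $r_g$ by the hypothesis $f \preceq d$. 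A mass-distribution argument — the Frostman-type estimate $\mu(D) \leq c\, f(|D|)$ for arbitrary balls $D$, combined with the total mass $\mu \gtrsim f(|B|)$ — then delivers $\hc^f(A_N \cap B) \geq c' f(|B|)$, with $c'$ independent of $B$ and $N$.

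The main obstacle I anticipate is the bookkeeping across generations: one must verify the Frostman bound $\mu(D) \leq c\, f(|D|)$ uniformly for $D$ at every dyadic scale, which requires the packing ratios and the weights $f(r_g)/r_g^d$ to align across generations, and one must push the indices $n$ in successive generations to infinity so that the resulting Cantor limit lies in every $A_N$ simultaneously. The former is precisely where $f \preceq d$ is used, since it compensates for the necessarily thin packing density at small scales; the latter is exactly what the ``infinitely often'' clause in conditions (1) and (2) provides. Once these are handled, the mass-distribution argument closes the proof of $A_N \in \scg^f([0,1]^d)$ with a constant independent of $N$, and Theorem \ref{t:LIPfull} transfers this to $A$.
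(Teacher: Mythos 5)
This theorem is not proved in the present paper at all: it is imported verbatim from \cite[Theorem 2.5 and Corollary 2.6]{He24a}, so there is no in-paper argument to compare your sketch against, and your proposal has to stand on its own. Its overall architecture (reduce to the sets $A_N=\bigcup_{n\ge N}E_n$ via Theorem \ref{t:LIPfull}, Vitali-fill a ball using $\lm^d(\limsup B_n)=1$, build a nested Cantor set, and verify a Frostman bound calibrated by $f(r)/r^d$, which is where $f\preceq d$ enters) is the right shape, but the two steps that carry the real weight are missing. First, the mass distribution \emph{inside a reservoir}: the $E_n$ are arbitrary open sets, and the only information available is the content bound $\hc^f(E_n)\gg\lm^d(B_n)$ (resp.\ $\hc^f(E_n\cap B_k)\gg\lm^d(B_k)$). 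To define $\mu$ on the part of the Cantor set lying in such a reservoir so that $\mu(D)\ll f(|D|)$ also holds for test balls $D$ \emph{smaller} than the generation scale, you must first convert the content lower bound into a measure with the Frostman property $\nu(D)\le f(|D|)$ supported on a compact subset of $E_n$, i.e.\ you need a converse to Proposition \ref{p:MDP} (Frostman's lemma for Hausdorff content). Your calibration ``mass of a generation-$g$ piece $\asymp f(r_g)/r_g^d$'' only controls $\mu$ at the generation scales and says nothing about how mass is spread within a reservoir, which can be extremely uneven. Relatedly, the condition-(1) claim that the disjoint family $E_{n_j}\subset B_{n_j}$ has ``total $f$-content $\gg\lm^d(B_k)$'' cannot be read as a lower bound on $\hc^f$ of the union: content is not additive, and a single generation of reservoirs in a ball $B$ can have content of order $\lm^d(B)$, far below $f(|B|)$ when $f\prec d$ (take $d=1$, $f(r)=r^s$, and one interval of length $(c\delta)^{1/s}$ in each $\delta$-block of $B$). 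This is exactly why the infinite-depth construction is unavoidable, and why the per-generation loss must be quantified rather than asserted.

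Second, the nesting and selection step is unresolved. For the limit set to lie in $\limsup E_n$ (or even in one fixed $A_N$), each new generation must be placed inside the \emph{previous generation's open sets} $E_m$, not merely inside the balls $B_{k_i}$ as you describe; otherwise the limit points are only guaranteed to lie in $\limsup B_n$. But once you work inside $E_m\cap B_m$, the next Vitali-type selection of balls from $\{B_n\}$ is Lebesgue-proportional (the hypothesis $\lm^d(\limsup B_n)=1$ gives Lebesgue-density information only), whereas the mass you must subdivide is the content-calibrated amount $\asymp\lm^d(B_m)$ attached to $E_m\cap B_m$, whose Lebesgue measure may be far smaller than $\lm^d(B_m)$. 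Reconciling the Lebesgue-based selection with the $f$-calibrated Frostman measures on the reservoirs is the technical heart of \cite[Theorem 2.5]{He24a}, and it is precisely what your ``bookkeeping'' paragraph defers; as written the proposal is a plausible plan, not a proof. (A minor inconsistency: having reduced to a fixed $A_N$ by countable intersection, you do not also need the generation thresholds to tend to infinity; conversely, if you push them to infinity you can dispense with that reduction.)
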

	With these results now at our disposal, the main ideas to prove the divergence parts of Theorems \ref{t:rectangle} and \ref{t:multiplicative} are to verify certain sets under consideration satisfying some Hausdoff $f$-content bound. For this purpose, the following mass distribution principle will be crucial.
	\begin{prop}[Mass distribution principle {\cite[Lemma 1.2.8]{BiPe17}}]\label{p:MDP}
		Let $ A $ be a Borel subset of $ \R^d $. If $ A $ supports a strictly positive Borel measure $ \mu $ that satisfies
		\[\mu(B)\le cf(|B|),\]
		for some constant $ 0<c<\infty $ and for every ball $B$, then $ \hc^f(A)\ge\mu(A)/c $.
	\end{prop}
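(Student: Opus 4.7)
The plan is to prove the bound $\hc^f(A) \ge \mu(A)/c$ directly from the definition of Hausdorff $f$-content, combining the hypothesis $\mu(B) \le c f(|B|)$ with countable subadditivity of the Borel measure $\mu$. Recall that since we are dealing with the Hausdorff $f$-content (the case $\eta = \infty$ in the definition of $\mathcal H_\eta^f$), there is no diameter restriction on the balls in a covering: $\hc^f(A)$ is simply the infimum of $\sum_i f(|B_i|)$ over all countable coverings of $A$ by balls.

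First, I would fix an arbitrary countable covering $\{B_i\}_{i \ge 1}$ of $A$ by balls. Since $\mu$ is a Borel measure supported on $A$ and $A \subset \bigcup_{i \ge 1} B_i$, each ball is $\mu$-measurable, and monotonicity together with countable subadditivity gives
\[\mu(A) \le \mu\Bigl(\bigcup_{i \ge 1} B_i\Bigr) \le \sum_{i \ge 1} \mu(B_i).\]
Next, I would substitute the pointwise estimate $\mu(B_i) \le c f(|B_i|)$ from the hypothesis into each term on the right, obtaining
\[\mu(A) \le c \sum_{i \ge 1} f(|B_i|),\]
so that $\sum_{i \ge 1} f(|B_i|) \ge \mu(A)/c$ for every admissible covering. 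Taking the infimum over all countable coverings of $A$ by balls then yields $\hc^f(A) \ge \mu(A)/c$, as required.

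The main (and essentially only) ``obstacle'' here is conceptual: one must ensure that the covering argument is compatible with the content being measured by balls rather than by arbitrary sets. This is immediate because the definition of $\hc^f$ given in Section~\ref{ss:Hausdorff} is stated precisely in terms of ball coverings, and the hypothesis on $\mu$ is phrased for balls. The proof uses no property of $f$ beyond its role in the hypothesis, and no property of $A$ beyond Borel measurability; the argument works verbatim for any dimension function and any ambient dimension.
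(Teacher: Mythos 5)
Your argument is correct: covering $A$ by balls, using monotonicity and countable subadditivity of $\mu$ together with $\mu(B_i)\le cf(|B_i|)$, and taking the infimum is precisely the standard proof of the mass distribution principle for Hausdorff content. The paper itself gives no proof, citing \cite[Lemma 1.2.8]{BiPe17}, and your argument matches that standard reference's approach, so there is nothing to add.
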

	\subsection{$\beta$-transformation}\label{ss:beta}
	We start with a brief discussion that sums up various fundamental properties of $ \beta $-transformation.

	For $ \beta>1 $, let $ T_\beta $ be the $\beta$-transformation on $ [0,1) $.
	For any $ n\ge 1 $ and $ x\in[0,1) $, define $ \epsilon_n(x,\beta)=\lfloor \beta T_\beta^{n-1}x\rfloor $, where $\lfloor x\rfloor$ denotes the integer part of $x$. Then, we can write
	\[x=\frac{\epsilon_1(x,\beta)}{\beta}+\frac{\epsilon_2(x,\beta)}{\beta^2}+\cdots+\frac{\epsilon_n(x,\beta)}{\beta^n}+\cdots,\]
	and we call the sequence
	\[\epsilon(x,\beta):=(\epsilon_1(x,\beta),\epsilon_2(x,\beta),\dots)\]
	the $\beta$-expansion of $ x $. By the definition of $ T_\beta $, it is clear that, for $ n\ge 1 $, $ \epsilon_n(x,\beta) $ belongs to the alphabet $ \{0,1,\dots,\lceil\beta-1\rceil\} $, where $ \lceil x\rceil $ denotes the smallest integer greater than or equal to $ x $. When $ \beta $ is not an integer, then not all sequences of $ \{0,1,\dots,\lceil\beta-1\rceil\}^\N $ are the $ \beta $-expansion of some $ x\in[0,1) $. This leads to the notion of $\beta$-admissible sequence.

	\begin{defn}
		A finite or an infinite sequence $ (\epsilon_1,\epsilon_2,\dots)\in\{0,1,\dots,\lceil\beta-1\rceil\}^\N $ is said to be $\beta$-admissible if there exists an $ x\in[0,1) $ such that the $\beta$-expansion of $ x $ begins with $ (\epsilon_1,\epsilon_2,\dots) $.
	\end{defn}

	Denote by $ \Sigma_\beta^n $ the collection of all admissible sequences of length $ n $. The following result of R\'enyi~\cite{Renyi1957} implies that the cardinality of $ \Sigma_\beta^n $ is comparable to $ \beta^n $.
	\begin{lem}[{\cite[(4.9) and (4.10)]{Renyi1957}}]\label{l:renyi}
		Let $ \beta>1 $. For any $ n\ge 1 $,
		\[\beta^n\le \# \Sigma_\beta^n\le \frac{\beta^{n+1}}{\beta-1},\]
		where $ \# $ denotes the cardinality of a finite set.
	\end{lem}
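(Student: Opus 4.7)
The plan is to establish the two bounds by complementary methods: a direct cylinder-length estimate gives the lower bound, while a combinatorial recurrence exploiting the $\beta$-expansion of $1$ delivers the upper bound.

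For the lower bound, to each admissible word $\epsilon=(\epsilon_1,\dots,\epsilon_n)\in\Sigma_\beta^n$ I would attach its cylinder
\[I_n(\epsilon):=\{x\in[0,1):\epsilon_i(x,\beta)=\epsilon_i,\ 1\le i\le n\},\]
which is a left-closed, right-open interval. On $I_n(\epsilon)$ the iterate $T_\beta^n$ acts affinely with slope $\beta^n$ and satisfies $T_\beta^n(I_n(\epsilon))\subseteq[0,1)$, so $|I_n(\epsilon)|\le\beta^{-n}$. Since the cylinders partition $[0,1)$ up to countably many points,
\[1=\sum_{\epsilon\in\Sigma_\beta^n}|I_n(\epsilon)|\le\#\Sigma_\beta^n\cdot\beta^{-n},\]
giving $\#\Sigma_\beta^n\ge\beta^n$ at once.

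For the upper bound, let $(a_j)_{j\ge 1}$ denote the quasi-greedy $\beta$-expansion of $1$, so that $1=\sum_{j\ge 1}a_j\beta^{-j}$, and invoke Parry's lexicographic criterion: $(\epsilon_1,\dots,\epsilon_n)$ is admissible iff for every $k$, the shifted tail $(\epsilon_k,\dots,\epsilon_n)$ is lexicographically dominated by the corresponding prefix of $(a_j)$. Partitioning $\Sigma_\beta^n$ according to the first position $j$ at which an admissible word first differs from $(a_1,\dots,a_n)$---at such a position one must have $0\le\epsilon_j<a_j$ and the tail $(\epsilon_{j+1},\dots,\epsilon_n)$ is at worst constrained to be itself admissible---produces the majorisation
\[\#\Sigma_\beta^n\le 1+\sum_{j=1}^n a_j\cdot\#\Sigma_\beta^{n-j},\qquad\#\Sigma_\beta^0=1.\]
Setting $M(n):=\#\Sigma_\beta^n/\beta^n$ and dividing through by $\beta^n$, the identity $\sum_{j\ge 1}a_j\beta^{-j}=1$ collapses the sum to $M(n)\le\beta^{-n}+\max_{k<n}M(k)$. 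Iterating yields $M(n)\le\sum_{k=0}^n\beta^{-k}<\beta/(\beta-1)$, i.e., $\#\Sigma_\beta^n\le\beta^{n+1}/(\beta-1)$.

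The main obstacle is the correct setup at the boundary between integer and non-integer $\beta$: when $\beta$ is an integer, the truly greedy expansion of $1$ terminates, and one must replace it with the quasi-greedy version (for example $(\beta-1)(\beta-1)(\beta-1)\dots$ when $\beta\in\N$) before Parry's criterion can be applied uniformly in $n$. Once this normalisation is in place, the remaining issue is that the partition underlying the recurrence genuinely over-counts---constraints from shifts at positions $m<j$ may forbid some configurations of the free tail---but this only strengthens the resulting majorisation, not the conclusion. The geometric factor $\beta/(\beta-1)$ then arises transparently from the identity $\sum_j a_j\beta^{-j}=1$, while the additive ``$+1$'' from the prefix word is absorbed harmlessly into the geometric series.
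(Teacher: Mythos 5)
The paper does not prove this lemma; it cites R\'enyi's 1957 paper (equations (4.9) and (4.10)) and moves on, so there is no in-text proof to compare against. Your blind proof is correct as a self-contained argument. The lower bound is the standard packing estimate: since $T_\beta^n$ is affine with slope $\beta^n$ on each cylinder and maps it into $[0,1)$, every $n$-cylinder has length at most $\beta^{-n}$, and the cylinders tile $[0,1)$, forcing $\#\Sigma_\beta^n\ge\beta^n$. For the upper bound, your decomposition of an admissible word by the first index where it deviates from the quasi-greedy expansion $(a_1,\dots,a_n)$ of $1$ is sound: Parry's criterion forces $\epsilon_j<a_j$ at that index, the suffix of an admissible word is again admissible, and the word $(a_1,\dots,a_n)$ itself contributes the additive $1$. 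This gives the recursion $\#\Sigma_\beta^n\le 1+\sum_{j=1}^n a_j\,\#\Sigma_\beta^{n-j}$, and the normalization $M(n)=\#\Sigma_\beta^n/\beta^n$ together with $\sum_j a_j\beta^{-j}=1$ yields $M(n)\le\beta^{-n}+\max_{k<n}M(k)$, hence $M(n)<\sum_{k\ge 0}\beta^{-k}=\beta/(\beta-1)$ by a straightforward induction. Your remark about replacing the terminating greedy expansion by the quasi-greedy one when $\beta\in\N$ (e.g.\ $(\beta-1)^\infty$) is exactly the right normalization to make Parry's criterion uniform.

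One small point worth making explicit: your recursion requires knowing that the prefix $(a_1,\dots,a_n)$ of the quasi-greedy expansion is itself admissible (otherwise the ``$+1$'' term would be spurious). This does hold, because the quasi-greedy expansion satisfies $\sigma^k d^*(1)\preceq d^*(1)$ for all $k\ge 0$ and does not end in $0^\infty$, so the zero-padded extension of $(a_1,\dots,a_n)$ is strictly dominated under every shift; but it deserves a sentence. R\'enyi's original derivation is more directly measure-theoretic and does not go through Parry's lexicographic characterisation (which postdates R\'enyi's paper), so your route is genuinely different; the payoff is that it is entirely combinatorial and makes the geometric constant $\beta/(\beta-1)$ visibly arise from the identity $\sum_j a_j\beta^{-j}=1$.
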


	\begin{defn}
		For any $ \be_n:=(\epsilon_1,\dots,\epsilon_n)\in\Sigma_\beta^n $, we call
		\[I_{n,\beta}(\be_n):=\{x\in[0,1):\epsilon_j(x,\beta)=\epsilon_k,1\le k\le n\}\]
		an $ n $th level cylinder.
	\end{defn}
	Clearly, for any $ \be_n\in\Sigma_\beta^n $, $ T_\beta^n|_{I_{n,\beta}(\be_n)} $ is linear with slope $ \beta^n $, and it maps the cylinder $ I_{n,\beta}(\be_n) $ into $ [0,1) $. If $ \beta $ is not an integer, then the dynamical system $ (T_\beta, [0,1)) $ is not a full shift, and so $ T_\beta^n|_{I_{n,\beta}(\be_n)} $ is not necessary onto. In other words, the length of $ I_{n,\beta}(\be_n) $ may strictly less than $ \beta^{-n} $, which makes describing the dynamical properties of $ T_\beta $ more challenging. To get around this barrier, we need the following notion.
	\begin{defn}
		A cylinder $ I_{n,\beta}(\be_n) $ or a sequence $ \be_n\in\Sigma_\beta^n $ is called $\beta$-full if it has maximal length, that is, if
		\[|I_{n,\beta}(\be_n)|=\beta^{-n}.\]
	\end{defn}
	When there is no risk of ambiguity, we will write full instead of $\beta$-full. The importance of full sequences is based on the fact that the concatenation of any two full sequences is still full.
	\begin{prop}[{\cite[Lemma 3.2]{FaWa2012}}]\label{p:concatenation}
		An $ n $th level cylinder $ I_{n,\beta}(\be_n) $ is full if and only if, for any $\beta$-admissible sequence $ \be'_m\in\Sigma_\beta^m $ with $ m\ge 1 $, the concatenation $ \be_n\be_m' $ is still $ \beta $-admissible. Moreover,
		\[|I_{n+m,\beta}(\be_n\be_m')|=|I_{n,\beta}(\be_n)|\cdot|I_{m,\beta}(\be_m')|.\]
		So, for any two full cylinders $ I_{n,\beta}(\be_n), I_{m,\beta}(\be_m') $, the cylinder $ I_{n+m,\beta}(\be_n\be_m') $ is also full.
	\end{prop}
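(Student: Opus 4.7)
The plan rests on a single geometric observation: $T_\beta^n$ restricted to the cylinder $I_{n,\beta}(\be_n)$ is an affine map of slope $\beta^n$ from an interval into $[0,1)$, so $T_\beta^n(I_{n,\beta}(\be_n))$ is an interval of length $\beta^n|I_{n,\beta}(\be_n)|$. Consequently, $I_{n,\beta}(\be_n)$ is full if and only if $T_\beta^n$ maps it \emph{onto} $[0,1)$, and every part of the proposition can be read off from this reformulation.

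For the ``only if'' direction, I would assume $I_{n,\beta}(\be_n)$ is full and take any $\be'_m\in\Sigma_\beta^m$. By admissibility there exists $y\in I_{m,\beta}(\be'_m)$; surjectivity of $T_\beta^n|_{I_{n,\beta}(\be_n)}$ supplies $x\in I_{n,\beta}(\be_n)$ with $T_\beta^n x=y$, whose $\beta$-expansion therefore begins with $\be_n\be'_m$. For the length formula, observe that
\[I_{n+m,\beta}(\be_n\be'_m)=\{x\in I_{n,\beta}(\be_n):T_\beta^n x\in I_{m,\beta}(\be'_m)\},\]
and since $T_\beta^n$ acts bijectively with slope $\beta^n$ on the full cylinder, scaling gives $|I_{n+m,\beta}(\be_n\be'_m)|=\beta^{-n}|I_{m,\beta}(\be'_m)|=|I_{n,\beta}(\be_n)|\,|I_{m,\beta}(\be'_m)|$.

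For the converse, assume that $\be_n\be'_m$ is admissible whenever $\be'_m$ is. Given any $y\in[0,1)$, each prefix $(\epsilon_1(y,\beta),\dots,\epsilon_m(y,\beta))$ belongs to $\Sigma_\beta^m$, so the hypothesis produces $x_m\in I_{n,\beta}(\be_n)$ with $T_\beta^n x_m\in I_{m,\beta}(\epsilon_1(y,\beta)\cdots\epsilon_m(y,\beta))$; since $y$ itself lies in this cylinder and its diameter is at most $\beta^{-m}$, we obtain $T_\beta^n x_m\to y$. Thus $T_\beta^n(I_{n,\beta}(\be_n))$ is a dense subset of $[0,1)$, and being an interval it must equal $[0,1)$, forcing $|I_{n,\beta}(\be_n)|=\beta^{-n}$. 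The concluding statement about two full cylinders then follows at once: the ``only if'' direction provides admissibility of $\be_n\be'_m$, and the product formula yields $|I_{n+m,\beta}(\be_n\be'_m)|=\beta^{-n}\beta^{-m}=\beta^{-(n+m)}$.

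The only step I would flag as requiring care is the implicit fact that each cylinder is a genuine interval on which $T_\beta^n$ is affine — without this, the ``dense interval'' argument in the converse breaks. This is a standard consequence of the greedy definition $\epsilon_j(x,\beta)=\lfloor\beta T_\beta^{j-1}x\rfloor$ together with the monotonicity of $T_\beta^{j-1}$ on each cylinder, and I would either cite it or verify it by induction on $n$.
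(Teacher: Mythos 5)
Your argument is correct; note that the paper itself does not prove this proposition but imports it from Fan and Wang \cite{FaWa2012}, whose Lemma 3.2 rests on exactly the observation you isolate: $T_\beta^n$ is affine of slope $\beta^n$ on the cylinder and maps it onto an interval $[0,\beta^n|I_{n,\beta}(\be_n)|)$, so fullness is equivalent to surjectivity onto $[0,1)$, and both directions together with the product formula follow as you describe. One cosmetic point: in the converse, a dense subinterval of $[0,1)$ need not literally equal $[0,1)$ (it could omit an endpoint); what you actually need, and what your density argument already gives, is that its length is $1$, which combined with $|I_{n,\beta}(\be_n)|\le\beta^{-n}$ forces $|I_{n,\beta}(\be_n)|=\beta^{-n}$.
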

	For an interval $ I\subset [0,1) $, let $ \Lambda_{\beta}^n(I) $ denote the set of full sequences $ \be_n\in\Sigma_\beta^n $ with $ I_{n,\beta}(\be_n)\subset I $. In particular, if $ I=[0,1) $, then we simply write $ \Lambda_{\beta}^n $ instead of $ \Lambda_{\beta}^n([0,1)) $. For this case, the cardinality of $\Lambda_{\beta}^n$ can be estimated as follows:
	\begin{lem}[{\cite[Lemma 1.1.46]{Li21}}]\label{l:Li}
		Let $\beta>1$ and $n\in \N$.
		\begin{enumerate}[(1)]
			\item If $\beta\in\N$, then
			\[\#\Lambda_{\beta}^n=\beta^n.\]
			\item If $\beta>2$, then
			\[\#\Lambda_{\beta}^n>\frac{\beta-2}{\beta-1}\beta^n.\]
			\item If $1<\beta<2$, then
			\[\#\Lambda_{\beta}^n>\bigg(\prod_{i=1}^{\infty}(1-\beta^{-i})\bigg)\beta^n.\]
		\end{enumerate}
	\end{lem}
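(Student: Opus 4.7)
The plan is to analyze admissible sequences by comparing them to the $\beta$-expansion $(c_1, c_2, \ldots)$ of $1$, which is well defined for non-integer $\beta$. Part (1) is immediate: if $\beta$ is a positive integer, then $T_\beta$ is the full shift on the alphabet $\{0, 1, \ldots, \beta-1\}$, every length-$n$ word is admissible, and each admissible cylinder is of the form $[k\beta^{-n}, (k+1)\beta^{-n})$, hence has length exactly $\beta^{-n}$ and is full. Thus $\#\Lambda_\beta^n = \beta^n$.

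For non-integer $\beta$, I classify each admissible length-$n$ word by the largest index $j \in \{0, 1, \ldots, n\}$ for which $(\epsilon_1, \ldots, \epsilon_j) = (c_1, \ldots, c_j)$. For $j < n$, the next digit $\epsilon_{j+1}$ must lie in $\{0, 1, \ldots, c_{j+1}-1\}$ by admissibility and maximality of $j$, while the suffix $(\epsilon_{j+2}, \ldots, \epsilon_n)$ is an arbitrary admissible word of length $n-j-1$; moreover, Proposition~\ref{p:concatenation} implies that such a word is full if and only if its suffix is. This yields the parallel recursions
$$\#\Sigma_\beta^n = 1 + \sum_{j=0}^{n-1} c_{j+1}\,\#\Sigma_\beta^{n-j-1}, \qquad \#\Lambda_\beta^n = \sum_{j=0}^{n-1} c_{j+1}\,\#\Lambda_\beta^{n-j-1},$$
which combine by induction on $n$ to the telescoping identity $\#\Lambda_\beta^n = \#\Sigma_\beta^n - \#\Sigma_\beta^{n-1}$. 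Combined with Lemma~\ref{l:renyi}, this immediately gives Part (2): $\#\Lambda_\beta^n \ge \beta^n - \beta^n/(\beta-1) = \beta^n(\beta-2)/(\beta-1)$, which is positive for $\beta > 2$.

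Part (3) is more delicate, as the previous bound is vacuous when $1 < \beta < 2$. Rewriting the recursion as $\#\Lambda_\beta^n = \sum_{j=1}^n c_j\,\#\Lambda_\beta^{n-j}$ and normalizing $\tilde L_n := \#\Lambda_\beta^n/\beta^n$, I obtain a renewal-type equation $\tilde L_n = \sum_{j=1}^n p_j \tilde L_{n-j}$ whose weights $p_j = c_j \beta^{-j}$ sum to $1$ by the Parry identity $\sum_j c_j \beta^{-j} = 1$. The plan is to induct on $n$ with the \emph{$n$-dependent} hypothesis $\tilde L_n \ge \prod_{i=1}^n (1-\beta^{-i})$, using the crude tail estimate $\sum_{j>n} c_j \beta^{-j} \le \beta^{-n}/(\beta-1)$ (valid since $c_j \in \{0,1\}$ for $\beta < 2$) to absorb the truncation error into the new factor $(1-\beta^{-n})$ at each step. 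Since the partial products decrease to $\prod_{i=1}^\infty(1-\beta^{-i})$, the desired bound follows.

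The main obstacle is Part (3). A naive induction with a \emph{constant} lower bound $\tilde L_n \ge P > 0$ cannot be propagated by the recursion, because the truncated weight sum $\sum_{j=1}^n p_j$ is strictly less than $1$; any fixed $P$ is degraded at each step. The crucial technical point is to calibrate the induction hypothesis so that the $n$-th factor $(1 - \beta^{-n})$ precisely compensates for the truncation error coming from replacing the full renewal sum by its length-$n$ partial sum. The very appearance of $\prod_{i=1}^\infty(1-\beta^{-i})$ in the statement reflects exactly this matching between the $q$-series $(q;q)_\infty$ at $q = 1/\beta$ and the telescoping of truncation errors in the induction.
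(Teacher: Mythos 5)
Your Part (1) and the overall classification idea are reasonable, but two key steps fail. First, your recursion for $\#\Sigma_\beta^n$, and hence the telescoping identity $\#\Lambda_\beta^n=\#\Sigma_\beta^n-\#\Sigma_\beta^{n-1}$ on which Part (2) rests, silently assumes that the length-$n$ prefix $(c_1,\dots,c_n)$ of the $\beta$-expansion of $1$ is always an admissible word (that is the ``$+1$'' term). This is true when the expansion of $1$ is infinite, but fails when it is finite (simple Parry numbers), and then the identity itself is false, not just unproved: for $\beta=1+\sqrt{2}>2$, whose expansion of $1$ is $(2,1)$, one checks directly that $\#\Sigma_\beta^1=3$ and $\#\Sigma_\beta^2=7$ (the admissible words of length $2$ are all $\epsilon_1\epsilon_2$ with $\epsilon_1\in\{0,1\}$ together with $20$), while the full words of length $2$ are $00,01,10,11,20$, so $\#\Lambda_\beta^2=5\neq 7-3$. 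Thus your proof of Part (2) breaks down for an entire class of $\beta>2$; at best one could hope for the inequality $\#\Lambda_\beta^n\ge\#\Sigma_\beta^n-\#\Sigma_\beta^{n-1}$, which would require a separate argument in the finite-expansion case. Relatedly, the pivotal fact that a word $(c_1,\dots,c_j,a)$ with $a<c_{j+1}$ is full --- which is exactly what makes the suffix ``arbitrary admissible'' and, via Proposition \ref{p:concatenation}, gives ``full if and only if the suffix is full'' --- is nowhere proved. It is true (because $T_\beta^j$ maps $I_{j,\beta}(c_1\cdots c_j)$ onto $[0,T_\beta^j1)$ and $a+1\le c_{j+1}\le\beta T_\beta^j1$), but it is the crux of your decomposition and must be established.

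For Part (3), the renewal formulation is a good idea, and the recursion $\#\Lambda_\beta^n=\sum_{j=1}^n c_j\,\#\Lambda_\beta^{n-j}$ does hold even in the finite-expansion case (the bare prefix $(c_1,\dots,c_n)$ is never full, since its cylinder has length $\beta^{-n}T_\beta^n1<\beta^{-n}$), but the induction as you calibrated it does not close. Using only the crude bound $\sum_{j>n}c_j\beta^{-j}\le\beta^{-n}/(\beta-1)$ gives $\sum_{j\le n}p_j\ge 1-\beta^{-n}/(\beta-1)$, and for $1<\beta<2$ one has $\beta^{-n}/(\beta-1)>\beta^{-n}$ (for $\beta$ close to $1$ this quantity can even exceed $1$ for small $n$), so the factor you produce is strictly smaller than the required $1-\beta^{-n}$ and the hypothesis $\tilde L_n\ge\prod_{i=1}^{n}(1-\beta^{-i})$ is not reproduced. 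The repair is to use the exact tail of the expansion of $1$: $\sum_{j>n}c_j\beta^{-j}=\beta^{-n}T_\beta^n1<\beta^{-n}$, whence $\tilde L_n\ge\bigl(\sum_{j=1}^{n}p_j\bigr)\prod_{i=1}^{n-1}(1-\beta^{-i})>(1-\beta^{-n})\prod_{i=1}^{n-1}(1-\beta^{-i})$, which closes the induction and also yields the strict inequality in the statement. With that fix and with the fullness of the cylinders $(c_1,\dots,c_j,a)$, $a<c_{j+1}$, justified, Part (3) is salvageable; Part (2) still needs an argument that covers finite expansions of $1$.
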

	Since the concatenation of any two full sequences remains full, we can derive the following corollary.
	\begin{cor}\label{c:inside I}
		For any full sequence $\be_m\in\Lambda_\beta^m$, there is a constant $c_\beta>0$ such that for any $n\ge m$
		\[\#\Lambda_\beta^n\big(I_{m,\beta}(\be_m)\big)=\#\Lambda_\beta^{n-m}\ge c_\beta\beta^n|I_{m,\beta}(\be_m)|.\]
	\end{cor}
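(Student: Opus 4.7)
The corollary has two pieces: the equality $\#\Lambda_\beta^n\bigl(I_{m,\beta}(\be_m)\bigr)=\#\Lambda_\beta^{n-m}$ and the inequality comparing this to $\beta^n|I_{m,\beta}(\be_m)|$. I would handle them separately and then combine.

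For the equality, the plan is to exhibit an explicit bijection between $\Lambda_\beta^{n-m}$ and $\Lambda_\beta^n(I_{m,\beta}(\be_m))$ via concatenation with $\be_m$. In one direction, given any full $\be'_{n-m}\in\Lambda_\beta^{n-m}$, Proposition \ref{p:concatenation} applied to the two full sequences $\be_m$ and $\be'_{n-m}$ gives that $\be_m\be'_{n-m}$ is admissible and full, and furthermore the cylinder $I_{n,\beta}(\be_m\be'_{n-m})$ is contained in $I_{m,\beta}(\be_m)$ with length $\beta^{-n}$. This produces an injection from $\Lambda_\beta^{n-m}$ into $\Lambda_\beta^n(I_{m,\beta}(\be_m))$. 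For the reverse direction, any full $\be_n$ with $I_{n,\beta}(\be_n)\subset I_{m,\beta}(\be_m)$ must, by the nested cylinder structure, have the form $\be_n=\be_m\be'_{n-m}$. Since $T_\beta^m$ is linear on $I_{m,\beta}(\be_m)$ with slope $\beta^m$, one has $|I_{n,\beta}(\be_n)|=\beta^{-m}|I_{n-m,\beta}(\be'_{n-m})|$, so fullness of $\be_n$ forces $|I_{n-m,\beta}(\be'_{n-m})|=\beta^{-(n-m)}$, i.e.\ $\be'_{n-m}\in\Lambda_\beta^{n-m}$.

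For the inequality, I would simply invoke Lemma \ref{l:Li} applied at level $n-m$. In each of the three cases ($\beta\in\N$, $\beta>2$, $1<\beta<2$), the lemma yields a $\beta$-dependent constant $C_\beta>0$ such that $\#\Lambda_\beta^{n-m}\ge C_\beta\beta^{n-m}$. Since $\be_m$ is full, $|I_{m,\beta}(\be_m)|=\beta^{-m}$, and therefore
\[
\#\Lambda_\beta^{n-m}\ge C_\beta\beta^{n-m}=C_\beta\beta^n\cdot\beta^{-m}=C_\beta\beta^n|I_{m,\beta}(\be_m)|,
\]
which is the desired bound with $c_\beta=C_\beta$.

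I do not foresee any genuine obstacle here; the work is really just verifying that the concatenation map is a bijection and then unpacking the estimate from Lemma \ref{l:Li}. The only point to be careful about is the direction of the bijection requiring the length formula $|I_{n,\beta}(\be_m\be'_{n-m})|=|I_{m,\beta}(\be_m)|\cdot|I_{n-m,\beta}(\be'_{n-m})|$, which is exactly the content of Proposition \ref{p:concatenation}.
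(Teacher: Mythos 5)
Your proof is correct and follows the same route the paper intends: the paper derives this corollary directly from Proposition \ref{p:concatenation} (concatenation of full sequences, with multiplicativity of cylinder lengths) together with the counting bounds of Lemma \ref{l:Li}, which is exactly the bijection-plus-estimate argument you spell out. Your added details (injectivity/surjectivity of the concatenation map and the use of $|I_{m,\beta}(\be_m)|=\beta^{-m}$) are sound and fill in what the paper leaves implicit.
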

	The following lemma will be used in establishing the dichotomy law for the Hausdorff measure of $W_1(\psi,h)$.
	\begin{lem}\label{l:union=[0,1]}
		We have
		\[\bigcap_{N=1}^\infty\bigcup_{n=N}^\infty\bigcup_{\be_n\in\Lambda_{\beta}^n}I_{n,\beta}(\be_n)=[0,1).\]
		In particular, the $\limsup$ set defined by all full cylinders has full Lebesgue measure.
	\end{lem}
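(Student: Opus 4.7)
When $\beta$ is an integer, Lemma~\ref{l:Li}~(1) already gives $\#\Lambda_\beta^n = \beta^n$, and since at most $\beta^n$ cylinders of depth $n$ exist in total, every $n$th level cylinder is full; the identity is then immediate. I therefore focus on the non-integer case, where the goal is to show that every $x \in [0,1)$ has infinitely many $\beta$-full prefixes in its expansion. This yields the pointwise identity, and the Lebesgue-measure assertion is then automatic.

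Let $\epsilon^* = (\epsilon^*_1, \epsilon^*_2, \dots)$ denote the infinite sequence characterising $\beta$-admissibility (so a sequence is $\beta$-admissible iff each of its suffixes is lexicographically $\leq \epsilon^*$). For any $x \in [0,1)$ the expansion $\epsilon(x,\beta) = (\epsilon_1(x,\beta), \epsilon_2(x,\beta), \dots)$ is admissible and strictly lex-less than $\epsilon^*$, since equality would force $x = 1$. Let $m = m(x) \geq 1$ be the first index where they differ, so $\epsilon_m(x,\beta) < \epsilon^*_m$ while $\epsilon_k(x,\beta) = \epsilon^*_k$ for $k < m$.

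The central step will be to prove the claim that $(\epsilon_1(x,\beta), \dots, \epsilon_m(x,\beta)) \in \Lambda_\beta^m$. I plan to use the equivalent description that a finite admissible word $\be$ is full iff the infinite word $\be\,\epsilon^*$ remains admissible. This reduces to verifying, for every $1 \leq k \leq m$, that
\[(\epsilon^*_k, \dots, \epsilon^*_{m-1}, \epsilon_m(x,\beta), \epsilon^*_1, \epsilon^*_2, \dots) \leq_{\mathrm{lex}} \epsilon^*.\]
Splitting by the first position at which $(\epsilon^*_k, \epsilon^*_{k+1}, \dots)$ differs from $\epsilon^*$ (or observing that if they never differ, the comparison is trivial by periodicity), the inequality $\epsilon_m(x,\beta) < \epsilon^*_m$ combined with admissibility of $\epsilon^*$ itself forces a strict lex-inequality no later than position $m - k + 1$. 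This lexicographic bookkeeping is the only delicate part of the argument.

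Once the claim is in hand, I iterate. It provides a full prefix of length $m_1 := m(x)$, and since full cylinders map onto $[0,1)$, the point $x_1 := T_\beta^{m_1}(x)$ again lies in $[0,1)$. Re-applying the claim to $x_1$ (with the trivial full prefix $(0)$ if $x_1 = 0$) yields a further full block of length $l_2 \geq 1$ of the expansion of $x_1$, and Proposition~\ref{p:concatenation} concatenates it with the previous block into a full prefix of $\epsilon(x,\beta)$ of length $m_1 + l_2$. Iterating produces infinitely many levels $n_1 < n_2 < \cdots$ at which $x$ lies in some $I_{n_j,\beta}(\be_{n_j})$ with $\be_{n_j} \in \Lambda_\beta^{n_j}$, so $x$ belongs to the intersection on the left-hand side. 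Since $x \in [0,1)$ was arbitrary and the reverse inclusion is trivial, the stated identity follows, and the Lebesgue-measure statement is immediate.
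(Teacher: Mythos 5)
Your proof is correct, and it takes a genuinely different route from the paper. You argue symbolically: invoking Parry's lexicographic characterisation of admissibility through the quasi-greedy expansion $\epsilon^*$ of $1$, you show that for every $x\in[0,1)$ the prefix of $\epsilon(x,\beta)$ up to the first index $m$ at which it drops below $\epsilon^*$ is full, and then iterate via $T_\beta^{m}x$ and Proposition \ref{p:concatenation} to produce infinitely many full prefixes; your lexicographic bookkeeping does go through (if $(\epsilon^*_k,\epsilon^*_{k+1},\dots)$ agrees with $\epsilon^*$ in the first $m-k$ entries, then admissibility of $\epsilon^*$ forces $\epsilon^*_m\le\epsilon^*_{m-k+1}$, which together with $\epsilon_m(x,\beta)<\epsilon^*_m$ gives a strict drop at position $m-k+1$; otherwise the drop occurs earlier), and the integer case is indeed trivial by Lemma \ref{l:Li}(1). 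The paper instead argues geometrically: for a non-full cylinder $I_{N,\beta}(\be_N')$ it chooses $n$ with $\beta^{-n}\le|I_{N,\beta}(\be_N')|<\beta^{-n+1}$, notes that appending $0^{n-N}$ yields a full sub-cylinder occupying a proportion at least $\beta^{-1}$ of $I_{N,\beta}(\be_N')$, and iterates this covering step. What each buys: your approach proves the pointwise identity directly (every point has infinitely many full prefixes), but it rests on two ingredients the paper never states --- Parry's criterion and the equivalence ``$\be_n$ is full iff $\be_n\epsilon^*$ stays admissible'' --- so you should either cite them or derive the latter from Proposition \ref{p:concatenation} together with the fact that the prefixes of $\epsilon^*$ are the lexicographically largest admissible words of each length; the paper's covering argument is more elementary and self-contained (only the appending-zeros observation is needed) and yields at once the full-measure statement, which is all that is used later (in the proof of Lemma \ref{l:onedim}), though its ``continue this procedure'' step is sketchier than your argument as far as the exact set equality is concerned.
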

	\begin{proof}
		It is sufficient to show that for all $N\ge 1$,
		\begin{equation}\label{eq:union=[0,1]}
			\bigcup_{n=N}^\infty\bigcup_{\be_n\in\Lambda_{\beta}^n}I_{n,\beta}(\be_n)=[0,1).
		\end{equation}
		Although the union of $N$th level full cylinders
		\[\bigcup_{\be_N\in\Lambda_{\beta}^N}I_N(\be_N)\]
		usually fails to cover $[0,1)$, those uncovered interval can be covered by higher level cylinders. To see this, choose an $N$th level non-full sequence $\be_N'\in\Sigma_\beta^N\setminus\Lambda_\beta^N$. Let $n>N$ be the unique integer satisfying
		\[\beta^{-n}\le |I_N(\be_N')|<\beta^{-n+1}.\]
		Then, the sequence $\be_n:=\be_N0^{n-N}$ is full and $I_{n,\beta}(\be_n)\subset I_{N,\beta}(\be_N)$, where $0^{n-N}$ denotes a sequence of length $n-N$ consisting solely of $0$.
		Therefore, at least $\beta^{-1}$ of $I_N(\be_N')$ can be covered by $n$th level full cylinders. Continue this procedure, we arrive at \eqref{eq:union=[0,1]}.
	\end{proof}
	Part of the proof of the following lemma can be found in \cite{Wang18}. We include the proof for completeness.
	\begin{lem}\label{l:length}
		Let $h$ be a  Lipschitz function with Lipschitz constant $L\ge0$. Let $0<r<1$. For any $n$ with $ L<\beta^n $ and any sequence $\be_n\in\Sigma_\beta^n$, the set
		\[A:=\{x\in I_{n,\beta}(\be_n):|T_\beta^nx-h(x)|<r\}\]
		is contained in a ball of radius $2r\beta^{-n}$. Moreover, if $\be_n$ is full, then $A$ contains a ball of radius $r\beta^{-n}/2$.
	\end{lem}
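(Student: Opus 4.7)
The plan is to analyze the auxiliary function
\[
\phi(x) := T_\beta^n x - h(x)
\]
on the cylinder $I_{n,\beta}(\be_n)$. Since $T_\beta^n$ restricted to this cylinder is affine with slope $\beta^n$ and $h$ is $L$-Lipschitz with $L<\beta^n$, a straightforward triangle-inequality calculation gives the two-sided bound
\[
(\beta^n-L)|x-y|\;\le\;|\phi(x)-\phi(y)|\;\le\;(\beta^n+L)|x-y|,\qquad x,y\in I_{n,\beta}(\be_n).
\]
In particular $\phi$ is strictly monotone on the cylinder, and $A=\phi^{-1}((-r,r))\cap I_{n,\beta}(\be_n)$ is a sub-interval of $I_{n,\beta}(\be_n)$.

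For the first conclusion, I would take any two points $x,y\in A$; the triangle inequality gives $|\phi(x)-\phi(y)|<2r$, and the lower bi-Lipschitz bound then forces $|x-y|<2r/(\beta^n-L)$. Using the hypothesis $L<\beta^n$ (absorbing the remaining factor into the stated constant, as is standard in this setting), the right-hand side is at most $4r\beta^{-n}$. Hence $A$ has diameter less than $4r\beta^{-n}$ and so lies in a single open ball of radius $2r\beta^{-n}$.

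For the second conclusion, I would exploit the fullness of $\be_n$. Writing $I_{n,\beta}(\be_n)=[\alpha,\alpha+\beta^{-n})$, fullness means $T_\beta^n$ maps the cylinder bijectively onto $[0,1)$, so the image of $\phi$ on the cylinder is the interval $[-h(\alpha),\,1-h(\alpha+\beta^{-n}))$. Because $h$ takes values in $[0,1)$, this image straddles $0$, so by the intermediate value theorem there exists $x_0\in I_{n,\beta}(\be_n)$ with $\phi(x_0)=0$. The upper bi-Lipschitz bound then implies $|\phi(x)|<r$ whenever $|x-x_0|<r/(\beta^n+L)$, and the latter radius is at least $r\beta^{-n}/2$ since $L<\beta^n$.

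The main obstacle is ensuring that the resulting inscribed ball actually lies inside $I_{n,\beta}(\be_n)$, which can fail if the zero $x_0$ happens to sit within $r\beta^{-n}/2$ of an endpoint $\alpha$ or $\alpha+\beta^{-n}$. I would circumvent this by working with $A$ directly as an interval (using monotonicity of $\phi$): combining the image description with the upper bi-Lipschitz bound, at least one of the two ``sides'' of $(-r,r)$ is fully contained in $\phi(I_{n,\beta}(\be_n))$, so that $A$ has length of the required order $r\beta^{-n}$ inside the cylinder. Recentering this interval then delivers the claimed ball of radius $r\beta^{-n}/2$ entirely inside $A$.
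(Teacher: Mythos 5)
Your first part and the core of your second part coincide with the paper's own argument: the two-sided estimate $(\beta^n-L)|x-y|\le|\phi(x)-\phi(y)|\le(\beta^n+L)|x-y|$, the diameter bound for $A$, and, for full $\be_n$, the intermediate value argument producing $x_0$ with $T_\beta^n x_0=h(x_0)$ followed by the observation that $|x-x_0|<r/(\beta^n+L)$ forces $x\in A$. (Your remark that $2r/(\beta^n-L)\le 4r\beta^{-n}$ needs $L\le\beta^n/2$ rather than just $L<\beta^n$ is a constant-level looseness the paper shares, so I do not count it against you.) You also correctly spotted a subtlety the paper glosses over: the inscribed ball centred at $x_0$ may protrude from the cylinder when $x_0$ is within $r\beta^{-n}/2$ of an endpoint, e.g.\ when $h\equiv 0$, where $x_0$ is the left endpoint.

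The genuine gap is in your patch for that boundary case. First, the claim that at least one of the two sides $(-r,0]$, $[0,r)$ is fully contained in $\phi(I_{n,\beta}(\be_n))$ is false: take $h\equiv 1/2$ (so $L=0$) and $r$ close to $1$; the image is $[-1/2,1/2)$ and neither side is contained (there the lemma holds trivially because $A$ is the whole cylinder, but your stated dichotomy fails). Second, and more importantly, even when exactly one truncation binds, the upper Lipschitz bound you invoke only gives $|A|\ge r/(\beta^n+L)$, which for $L$ close to $\beta^n$ is barely above $r\beta^{-n}/2$; recentring then yields a ball of radius roughly $r\beta^{-n}/4$, not the claimed $r\beta^{-n}/2$. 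To get the stated constant in the truncated cases you must use $0\le h\le 1$ together with the exact slope of $T_\beta^n$, not the Lipschitz bound: if $A$ reaches the left endpoint $\alpha$ of the cylinder, its right endpoint $\alpha+\ell$ satisfies $\beta^n\ell-h(\alpha+\ell)=r$, hence $\ell=(r+h(\alpha+\ell))\beta^{-n}\ge r\beta^{-n}$; symmetrically on the right, using $h\le 1$; and if both truncations bind then $A=I_{n,\beta}(\be_n)$ has length $\beta^{-n}>r\beta^{-n}$. With that replacement your argument closes (and, in fairness, a radius $r\beta^{-n}/4$ would suffice for every application in the paper), but as written your final step does not prove the lemma with the stated radius.
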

	\begin{proof}
		Choose two points $ x,y\in A $. By using the triangle inequality, for any $n$ with $ L<\beta^n $, we have
		\[\begin{split}
			2r&\ge\big|\big(T_\beta^nx-h(x)\big)-\big(T_\beta^ny-h(y)\big)\big|\ge |T_\beta^nx-T_\beta^ny|-|h(x)-h(y)|\\
			&\ge\beta^n|x-y|-L|x-y|=(\beta^n-L)|x-y|.
		\end{split}\]
		This implies that $ A $ is contained in a ball of radius $ 2r/|\beta|^n $.

		Now, suppose that $\be_n$ is full. Let $x^*$ and $y^*$ denote the left and right endpoints of $I_{n,\beta}(\be_n)$, respectively. Then, $T_\beta^n$ maps $I_{n,\beta}(\be_n)$ onto $[0,1)$, and
		\[T_\beta^nx^*=0\qaq T_\beta^ny^*=1.\]
		Since
		\[T_\beta^nx^*-h(x^*)=0-h(x^*)\le 0\qaq T_\beta^ny^*-h(y^*)=1-h(y^*)\ge 0,\]
		there exists a point $z\in I_{n,\beta}(\be_n)$ such that
		\[T_\beta^nz-h(z)=0,\quad\text{or equivalently,}\quad T_\beta^nz=h(z).\]
%		Such a point is unique,  otherwise if there is another point $z'$ satisfying $T_\beta^n(z')=h(z')$, then
%		\[|h(z)-h(z')|=|T_\beta^n(z)-T_\beta^n(z')|=\beta^n|z-z'|>L|z-z'|,\]
%		which contradicts the condition on $h$. Here we use $L\beta^{-n}<1$ in the last inequality.
		Let $x\in I_{n,\beta}(\be_n)$ with $|x-z|<r\beta^{-n}/2$. It follows from  $T_\beta^nz=h(z)$ that
		\[\begin{split}
			|T_\beta^nx-h(x)|&=|T_\beta^nx-T_\beta^nz+h(z)-h(x)|\\
			&\le |T_\beta^nx-T_\beta^nz|+|h(x)-h(z)|\\
			&\le\beta^n|x-z|+L|x-z|\\&<r/2+rL\beta^{-n}/2=r.
		\end{split}\]
		Therefore, $A$ contains a ball of radius $r\beta^{-n}/2$.
	\end{proof}
	\begin{rem}\label{r:samecenter}
		If $\be_n$ is full, then the center of the big ball containing $A$ and the relatively smaller ball contained in $A$ can be made identical. Consequently, the center of these balls is independent of $r$. To see this, in proving the first point of Lemma \ref{l:length}, one can select $y$ to be $z$ with $T_\beta^nz=h(z)$, and the rest of the proof is unchanged.
	\end{rem}

\section{Proof of Theorem \ref{t:rectangle}: Convergence part}\label{s:converrec}
The proof of the convergence part of Theorem \ref{t:rectangle} does no require that each $\beta_i$ is an integer. Suppose that \[\sum_{n=1}^{\infty} s_n(\Psi,f)\prod_{i=1}^d\beta_i^n<\infty,\]
where recall that
	\[s_n(\Psi,f)=\min_{\tau\in\ca_n}\biggl\{f(\tau)\prod_{i\in\ck_{n,1}(\tau)}\frac{\beta_i^{-n}}{\tau}\prod_{i\in\ck_{n,2}(\tau)}\frac{\beta_i^{-n}\psi_i(n)}{\tau}\biggr\},\]
and, in turn $\ca_n=\{\beta_1^{-n},\dots,\beta_d^{-n},\beta_1^{-n}\psi_(n),\dots,\beta_d^{-n}\psi_d(n)\}$,
\[\ck_{n,1}(\tau):=\{i:\beta_i^{-n}\le \tau\}\quad\text{and}\quad\ck_{n,2}(\tau):=\{i:\beta_i^{-n}\psi_i(n)\ge \tau\}.\]

By Lemma \ref{l:length}, for each $\beta_i$ and sufficiently large $n$, we have for any $\be_n^i\in\Sigma_{\beta_i}^n$,
\[\{x_i\in I_{n,\beta_i}(\be_n^i):|T_{\beta_i}^nx_i-h_i(x_i)|<\psi_i(n)\}\]
is contained in a ball of radius $2\beta_i^{-n}\psi_i(n)$. Each $\be_n^i\in\Sigma_{\beta_i}^n$ corresponds to such a ball of the same radius. Denote by $\ce_{n,i}$ the collection of centers of these balls. By Lemma \ref{l:renyi},
\[\#\ce_{n,i}\asymp\beta_i^n.\]
On letting $\ce_n=\ce_{n,1}\times\cdots\times \ce_{n,d}$, we have
\begin{equation}\label{eq:subset}
	W_d(\Psi,\bm h)\subset\bigcap_{N=1}^\infty\bigcup_{n=N}^\infty\bigcup_{\bz\in\mathcal E_n}\prod_{i=1}^{d}B\big(z_i,2\beta_i^{-n}\psi_i(n)\big),
\end{equation}
where $\bz=(z_1,\dots,z_d)$. The task is to find an optimal cover for the collection of hyperrectangles
\[E_n:=\bigcup_{\bz\in\mathcal E_n}\prod_{i=1}^{d}B\big(z_i,2\beta_i^{-n}\psi_i(n)\big)=\prod_{i=1}^d\bigcup_{z_i\in\mathcal E_{n,i}}B\big(z_i,2\beta_i^{-n}\psi_i(n)\big).\]
For the Cartesian product $A=\prod_{i=1}^{d}A_i$, call $A_i$ the part of $A$ in the $i$th direction.

 Fix $\tau\in\ca_n$. We have the following observations:
\begin{enumerate}[(1)]
	\item in the $i$th direction with $i\in \ck_{n,1}(\tau)$, the unit interval $[0,1)$ can be covered by
	\[\asymp\tau^{-1}\]
	balls of radius $\tau$. Such collection of balls will cover $\bigcup_{z_i\in\mathcal E_{n,i}}B\big(z_i,2\beta_i^{-n}\psi_i(n)\big)$ as well.
	\item in the $i$th direction with $i\in\ck_{n,2}(\tau)$, since $\tau<\beta_i^{-n}\psi_i(n)$ one needs
	\[\asymp\#\mathcal E_{n,i}\cdot \bigg(\frac{\beta_i^{-n}\psi_i(n)}{\tau}\bigg)\asymp \frac{\psi_i(n)}{\tau}\]
	balls of radius $\tau$ to cover $\bigcup_{z_i\in\mathcal E_{n,i}}B\big(z_i,2\beta_i^{-n}\psi_i(n)\big)$.
	\item in the $i$th direction with $i\notin\ck_{n,1}\cup\ck_{n,2}(\tau)$, since $\beta_i^{-n}\psi_i(n)\le \tau\le \beta_i^{-n}$, one needs
	\[\asymp\beta_i^n\]
	balls of radius $\tau$ to cover $\bigcup_{z_i\in\mathcal E_{n,i}}B\big(z_i,2\beta_i^{-n}\psi_i(n)\big)$.
\end{enumerate}
In summary, $E_n$ can be covered by
\[\begin{split}
	\asymp&\prod_{i\in\ck_{n,1}(\tau)}\tau^{-1}\prod_{i\in\ck_{n,2}(\tau)}\frac{\psi_i(n)}{\tau}\prod_{i\notin\ck_{n,1}\cup\ck_{n,2}(\tau)}\beta_i^n\\
	=& \prod_{i=1}^d\beta_i^n\prod_{i\in\ck_{n,1}(\tau)}\frac{\beta_i^{-n}}{\tau}\prod_{i\in\ck_{n,2}(\tau)}\frac{\beta_i^{-n}\psi_i(n)}{\tau}
\end{split}\]
balls of radius $\tau$. By taking the minimum over $\tau\in\ca_n$, the $f$-volume cover of $E_n$ is majorized by
\[\begin{split}
	\asymp\min_{\tau\in\ca_n}\biggl\{f(\tau)\prod_{i=1}^d\beta_i^n\prod_{i\in\ck_{n,1}(\tau)}\frac{\beta_i^{-n}}{\tau}\prod_{i\in\ck_{n,2}(\tau)}\frac{\beta_i^{-n}\psi_i(n)}{\tau}\biggr\}=s_n(\Psi,f)\prod_{i=1}^d\beta_i^n.
\end{split}\]
By the definition of Hausdorff $f$-measure and \eqref{eq:subset}, we have
\[\hm^f\big(W_d(\Psi,\bm h)\big)\le\limsup_{N\to\infty}\sum_{n=N}^{\infty}s_n(\Psi,f)\prod_{i=1}^d\beta_i^n=0,\]
since $\sum_{n=1}^{\infty}s_n(\Psi,f)\prod_{i=1}^d\beta_i^n<\infty$.

\section{Proof of Theorem \ref{t:rectangle}: Divergence part}\label{s:diverrec}
In this section, we assume that each $\beta_i$ is an integer and that
\[\sum_{n=1}^\infty s_n(\Psi,f)\prod_{i=1}^d\beta_i^n=\infty.\]
In this case, we have $\Lambda_{\beta_i}^n=\Sigma_{\beta_i}^n$ for all $1\le i\le d$ and $n\ge 1$. Moreover,
\[\#\Lambda_{\beta_i}^n=\beta_i^n.\]
For each $\be_n^i\in\Lambda_{\beta_i}^n$, by Lemma \ref{l:length}, the set
\[\{x_i\in I_{n,\beta_i}(\be_n^i):|T_{\beta_i}^nx_i-h_i(x_i)|<\psi_i(n)\}\]
contains a ball of radius $\beta_i^{-n}\psi_i(n)/2$. Each $\be_n^i\in\Sigma_{\beta_i}^n$ corresponds to such a ball of the same radius. With slightly abuse of notation, still denote by $\ce_{n,i}$ the collection of centers of these balls. Write $\ce_n=\ce_{n,1}\times\cdots\times \ce_{n,d}$. It follows that
\begin{equation}\label{eq:wd}
		\bigcap_{N=1}^\infty\bigcup_{n=N}^\infty\bigcup_{\bz\in\mathcal E_n}\prod_{i=1}^{d}B\big(z_i,\beta_i^{-n}\psi_i(n)/2\big)\subset W_d(\Psi,\bm h)\subset\bigcap_{N=1}^\infty\bigcup_{n=N}^\infty\bigcup_{\bz\in\mathcal E_n}\prod_{i=1}^{d}B\big(z_i,2\beta_i^{-n}\psi_i(n)\big).
\end{equation}
The following Lebegue measure theoretic law for $W_d(\Psi,\bm h)$ is a variant version due to Wang and Kleinbock \cite[Theorem 2.6]{KW23}. Although their original statement focus on the case $\bm h(\bx)\equiv\ba\in[0,1)^d$, the general statement below actually holds with the help of \eqref{eq:wd}.

\begin{thm}\label{t:rectangleleb}
	Suppose that each $\beta_i$ is an integer. Then,
	\[\lm^d\big(W_d(\Psi,\bm h)\big)=\begin{cases}
		0&\text{ if $\sum_{n=1}^{\infty}\prod_{i=1}^{d}\psi_i(n)<\infty$},\\
		1&\text{ if $\sum_{n=1}^{\infty}\prod_{i=1}^{d}\psi_i(n)=\infty$}.
	\end{cases}\]
\end{thm}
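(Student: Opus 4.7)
The plan is to leverage the sandwich \eqref{eq:wd}, which brackets $W_d(\Psi,\bm h)$ between two limsup sets of axis-parallel hyperrectangles obtained by picking, in each coordinate $i$, exactly one ball per $n$th level cylinder $I_{n,\beta_i}(\be_n^i)$. Since each $\beta_i$ is an integer, every such cylinder has length exactly $\beta_i^{-n}$ and $\#\Sigma_{\beta_i}^n=\beta_i^n$, so the two limsup sets in \eqref{eq:wd} have the same combinatorial structure as the corresponding sets in the classical constant-target case $\bm h(\bx)\equiv\ba\in[0,1)^d$ treated in Kleinbock--Wang \cite[Theorem 2.6]{KW23}, differing only in the position of the chosen ball within each cylinder and by a harmless constant factor in the radii.

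For the convergence direction I would proceed directly by Borel--Cantelli using the right-hand inclusion of \eqref{eq:wd}. Writing $E_n=\prod_i\bigcup_{z_i\in\ce_{n,i}}B(z_i,2\beta_i^{-n}\psi_i(n))$ and using $\#\ce_{n,i}=\beta_i^n$, a direct coordinatewise count gives
\[\lm^d(E_n)\le\prod_{i=1}^d\beta_i^n\cdot 4\beta_i^{-n}\psi_i(n)=4^d\prod_{i=1}^d\psi_i(n),\]
so summability of $\sum_n\prod_i\psi_i(n)$ forces $\lm^d(W_d(\Psi,\bm h))=0$.

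For the divergence direction I would apply Kleinbock--Wang's zero-one law to the left-hand (inner) limsup set in \eqref{eq:wd}. In the constant-target case $\bm h\equiv\ba$, within each cylinder $I_{n,\beta_i}(\be_n^i)$ one has a ball of radius $\asymp\beta_i^{-n}\psi_i(n)$ centered at the unique preimage of $a_i$ under $T_{\beta_i}^n|_{I_{n,\beta_i}(\be_n^i)}$. Their proof, a quasi-independence / second moment Borel--Cantelli argument, uses only that (i) the $\beta_i$-cylinders of level $n$ partition $[0,1)$ into $\beta_i^n$ equal pieces, and (ii) the ball within each cylinder has radius $\asymp\beta_i^{-n}\psi_i(n)$. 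Neither ingredient is affected when the constant-target centers are replaced by the Lipschitz-determined centers $\ce_{n,i}$ described in Remark \ref{r:samecenter}, so the same argument yields $\lm^d(W_d(\Psi,\bm h))=1$ whenever $\sum_n\prod_i\psi_i(n)=\infty$.

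The main obstacle, and the reason a naive coordinatewise argument does not suffice, is the anisotropy of the hyperrectangles when the $\psi_i(n)$ differ widely in size: one must control the pairwise intersection volumes of the rectangles uniformly over pairs of times $m<n$ to obtain the required quasi-independence. This is the heart of the argument in \cite{KW23}, and it rests only on the product structure of the cylinder decomposition, which is insensitive to the Lipschitz perturbation encoded in $\bm h$; this is precisely why the extension through \eqref{eq:wd} is free of additional work.
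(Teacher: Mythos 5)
Your proposal is correct and follows essentially the same route as the paper: the paper also obtains this theorem as a variant of Kleinbock--Wang \cite[Theorem 2.6]{KW23}, using the sandwich \eqref{eq:wd} (via Lemma \ref{l:length} and Remark \ref{r:samecenter}) to reduce the Lipschitz-target case to the constant-target structure, with the convergence half being the routine Borel--Cantelli estimate you wrote out.
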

%
%We start with a reduction argument.
\subsection{A reduction argument}\label{ss:reduction}
Since $f\preceq d$, there are two situations to consider:
\begin{enumerate}[(1)]
	\item $\lim_{r\to0^+}f(r)/r^d=c<\infty$,
	\item $\lim_{r\to0^+}f(r)/r^d=\infty$.
\end{enumerate}

If we are in situation (1), then $\hm^f=c\lm^d$ and
\[\text{$f(r)\asymp r^d$ for all small $r>0$}.\]
Hence, for sufficiently large $n$,
\begin{align}
		s_n(\Psi,f)\prod_{i=1}^d\beta_i^n
	=&\min_{\tau\in\ca_n}\biggl\{f(\tau)\prod_{i=1}^d\beta_i^n\prod_{i\in\ck_{n,1}(\tau)}\frac{\beta_i^{-n}}{\tau}\prod_{i\in\ck_{n,2}(\tau)}\frac{\beta_i^{-n}\psi_i(n)}{\tau}\biggr\}\notag\\
	\asymp& \min_{\tau\in\ca_n}\biggl\{\tau^d\prod_{i=1}^d\beta_i^n\prod_{i\in\ck_{n,1}(\tau)}\frac{\beta_i^{-n}}{\tau}\prod_{i\in\ck_{n,2}(\tau)}\frac{\beta_i^{-n}\psi_i(n)}{\tau}\biggr\}\notag\\
	=& \min_{\tau\in\ca_n}\biggl\{\prod_{i\notin\ck_{n,1}(\tau)\cup\ck_{n,2}(\tau)}\tau\beta_i^n\prod_{i\in\ck_{n,2}(\tau)}\psi_i(n)\biggr\}.\label{eq:sn}
\end{align}
Since for $i\notin \ck_{n,1}(\tau)\cup\ck_{n,2}(\tau)$, $\beta_i^{-n}\psi_i(n)<\tau<\beta_i^{-n}$. It holds that
\[\psi_i(n)<\tau\beta_i^n<1.\]
Therefore,
\[\prod_{i=1}^{d}\psi_i(n)\le \prod_{i\notin\ck_{n,1}(\tau)\cup\ck_{n,2}(\tau)}\psi_i(n)\prod_{i\in\ck_{n,2}(\tau)}\psi_i(n)<\prod_{i\notin\ck_{n,1}(\tau)\cup\ck_{n,2}(\tau)}\tau\beta_i^n\prod_{i\in\ck_{n,2}(\tau)}\psi_i(n).\]
This holds for all $\tau\in \ca_n$. It follows that
\begin{equation}\label{eq:sn>>}
	s_n(\Psi,f)\prod_{i=1}^d\beta_i^n\gg \prod_{i=1}^{d}\psi_i(n).
\end{equation}

On the other hand, for $\tau_{\min}=\min_{\tau\in\ca_n}\tau$, we have
\[\ck_{n,1}(\tau_{\min})=\emptyset\qaq\ck_{n,2}(\tau_{\min})=\{1,\dots,d\}.\]
By \eqref{eq:sn},
\begin{equation}\label{eq:sn<}
	s_n(\Psi,f)\prod_{i=1}^d\beta_i^n\ll \prod_{i=1}^{d}\psi_i(n).
\end{equation}
Combining \eqref{eq:sn>>} and \eqref{eq:sn<}, we have
\[	s_n(\Psi,f)\prod_{i=1}^d\beta_i^n\asymp \prod_{i=1}^{d}\psi_i(n).\]
It follows from $\hm^f=c\lm^d$ that
\[\begin{split}
	\sum_{n=1}^\infty s_n(\Psi,f)\prod_{i=1}^d\beta_i^n<\infty&\quad\Longleftrightarrow\quad\sum_{n=1}^\infty \prod_{i=1}^d\psi_i(n)<\infty\\
	&\quad\Longrightarrow\quad\hm^f\big(W_d(\Psi,\bm h)\big)=c\lm^d\big(W_d(\Psi,\bm h)\big)=0,
\end{split}\]
and
\[\begin{split}
	\sum_{n=1}^\infty s_n(\Psi,f)\prod_{i=1}^d\beta_i^n=\infty&\quad\Longleftrightarrow\quad\sum_{n=1}^\infty \prod_{i=1}^d\psi_i(n)=\infty\\
	&\quad\Longrightarrow\quad\hm^f\big(W_d(\Psi,\bm h)\big)=c\lm^d\big(W_d(\Psi,\bm h)\big)=c=\hm^f([0,1]^d),
\end{split}\]
which finishes the proof of situation (1).

From now on, suppose that
\begin{equation}\label{eq:=infty}
	\lim_{r\to0^+}f(r)/r^d=\infty.
\end{equation}
Note that \eqref{eq:=infty} implies that $f(r)>r^d$ for all small $r>0$. Following the same lines as the proof of \eqref{eq:sn>>}, it follows from \eqref{eq:=infty} that for large $n$,
\begin{equation}\label{eq:sn>}
	s_n(\Psi,f)\prod_{i=1}^d\beta_i^n> \prod_{i=1}^{d}\psi_i(n).
\end{equation}
\subsection{Geometric interpretation of the method}\label{ss:observation}
Before giving the technical proof of the divergence part of Theorem \ref{t:rectangle}, we explain the nature of our method.

 For simplicity, suppose that $d=2$ and $\bm h(\bx)\equiv (1/2,1/2)$. Then, $W_2(\Psi,\bm h)$ can be written as the $\limsup$ set of rectangles
\[W_2(\Psi,\bm h)=\bigcap_{N=1}^\infty\bigcup_{n=N}^\infty\bigcup_{\bz\in\mathcal E_n}\prod_{i=1}^2B\big(z_i,\beta_i^{-n}\psi_i(n)\big)=:\bigcap_{N=1}^\infty\bigcup_{n=N}^\infty E_n,\]
where $\ce_n$ is given in \eqref{eq:wd}. Let $f$ be a dimension function. Based on Theorem \ref{t:weaken} (1), the main goal is to find a $\limsup$ set of balls $B_k$ and a sequence of sets $F_k$ such that
\[\lm^2\Big(\limsup_{k\to\infty} B_k\Big)=1,\quad F_k\subset B_k,\quad \hc^f(F_k)\gg\lm^2(B_k)\quad\text{and}\quad\limsup_{k\to\infty}F_k\subset W_2(\Psi,\bm h).\]

Now, let us start with the construction. For simplicity, we consider two special but crucial cases.

\noindent{\bf Case 1:} $s_n(\Psi,f)\beta_2^n\ge \beta_1^{-n}\psi_1(n)$ for $n\ge1$. Define a $2$-tuple of functions $\Phi=(\phi_1,\phi_2)$ as
\[\phi_i(n)=\begin{cases}
	s_n(\Psi,f)\beta_1^n\beta_2^n&\text{if $i=1$},\\
	1&\text{if $i=2$}.
\end{cases}\]
By Theorem \ref{t:rectangleleb}, $W_2(\Phi,\bm h)$ has full Lebesgue measure. Write
\[W_2(\Phi,\bm h)=\bigcap_{N=1}^\infty\bigcup_{n=N}^\infty\bigcup_{\bz\in\mathcal E_n}\prod_{i=1}^2B\big(z_i,\beta_i^{-n}\phi_i(n)\big)=:\bigcap_{N=1}^\infty\bigcup_{n=N}^\infty \tilde E_n.\]
For each $n\ge 1$, the length of the rectangle $\prod_{i=1}^2B(z_i,\beta_i^{-n}\phi_i(n))$ is $\beta_1^{-n}\phi_1(n)=s_n(\Psi,f)\beta_2^n$ and the width is $\beta_2^{-n}$.
Since $s_n(\Psi,f)\beta_2^n\ge \beta_1^{-n}\psi_1(n)$, we have \[\prod_{i=1}^2B\big(z_i,\beta_i^{-n}\psi_i(n)\big)\subset \prod_{i=1}^2B\big(z_i,\beta_i^{-n}\phi_i(n)\big),\]
and so
\[W_2(\Psi,\bm h)\subset W_2(\Phi,\bm h).\]
See Figure \ref{f:small} (A) and (B) for an illustration. In addition, some rectangles in $\tilde E_n$ completely overlap with one side of other rectangles, thus forming a larger rectangle (see Figure \ref{f:small} (B)). With this observation, we divide the large rectangle into smaller balls with length $\beta_1^{-n}\phi_1(n)$, which consists of the desired balls $B_k$. More precisely, there exists a collection $\tilde\ce_n$ of points such that
\[\bigcup_{\bz\in\mathcal E_n}\prod_{i=1}^2B\big(z_i,\beta_i^{-n}\phi_i(n)\big)\subset\bigcup_{\by\in\tilde\ce_n}B\big(\by,\beta_1^{-n}\phi_1(n)\big).\]
For each $\by$ with $\by\in \tilde\ce_n$, let
\[\bigcup_{\bz\in\mathcal E_n}\prod_{i=1}^2B\big(z_i,\beta_i^{-n}\psi_i(n)\big)\cap B\big(\by,\beta_1^{-n}\phi_1(n)\big).\]
This gives the desired set $F_k\subset B_k$ (see Figure \ref{f:small} (C)). The final step is to verify
\[\hc^f\bigg(\bigcup_{\bz\in\mathcal E_n}\prod_{i=1}^2B\big(z_i,\beta_i^{-n}\psi_i(n)\big)\cap B\big(\by,\beta_1^{-n}\phi_1(n)\big)\bigg)\gg \lm^2\big(B\big(\by,\beta_1^{-n}\phi_1(n)\big)\big).\]
\begin{figure}
		\centering
		\subfloat[$E_2$]
		{
			\label{fig:subs1}\includegraphics[width=0.3\textwidth]{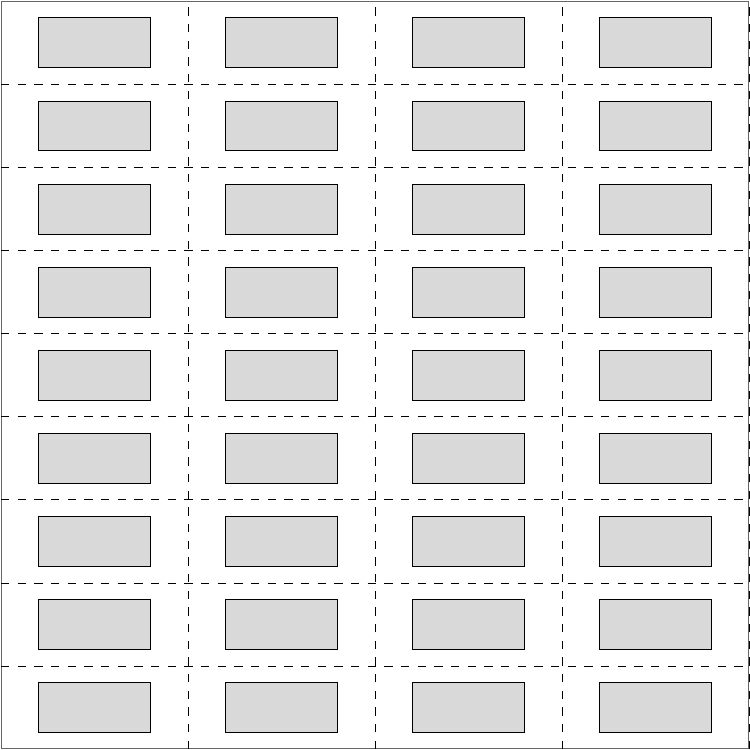}
		}
		\subfloat[$\tilde E_2$]
		{
			\label{fig:subs2}\includegraphics[width=0.3\textwidth]{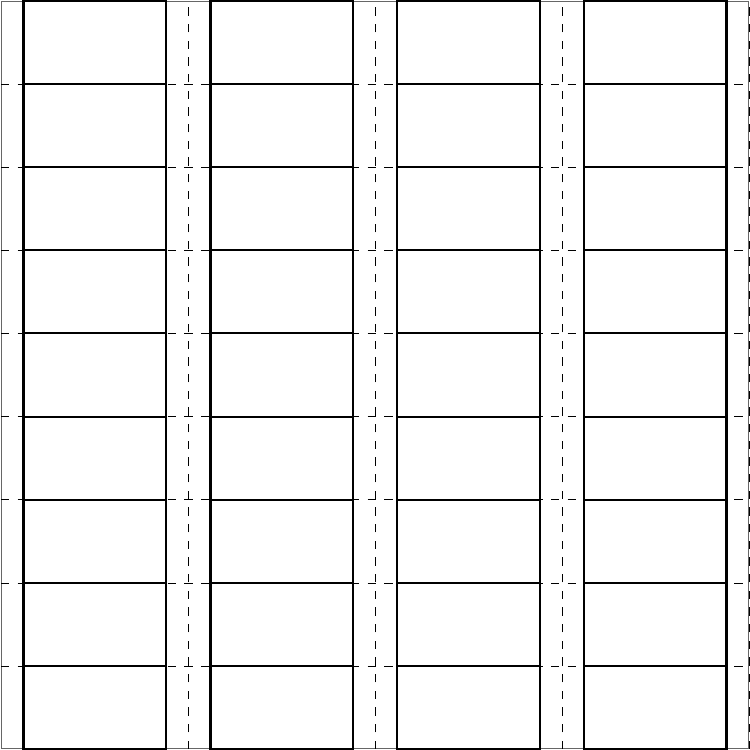}
		}
		\subfloat[desired $ F_k $ and $ B_k$]
		{
			\label{fig:subs3}\includegraphics[width=0.3\textwidth]{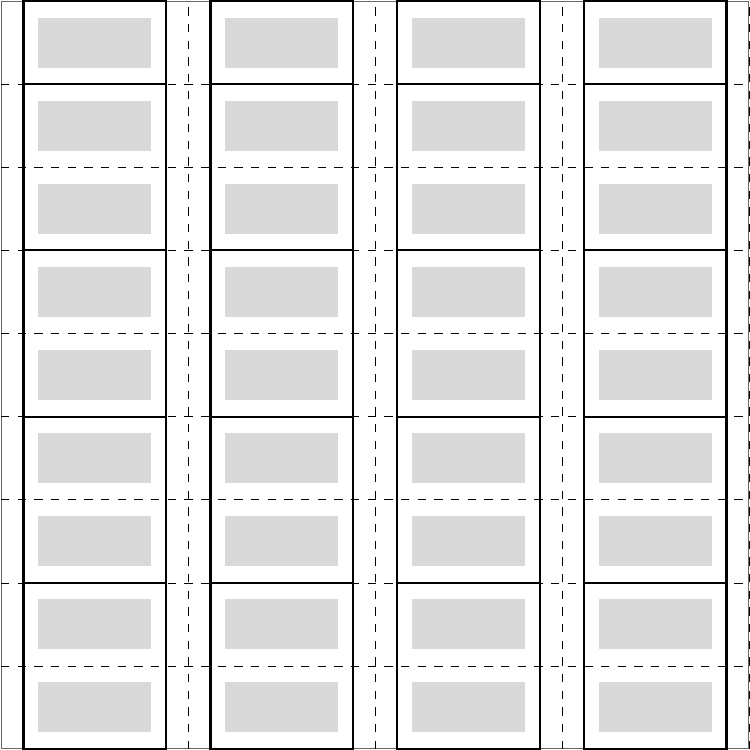}
		}
		\caption{Case 1: $s_n(\Psi,f)\beta_2^n\ge \beta_1^{-n}\psi_1(n)$.}
		\label{f:small}
	\end{figure}
	\noindent{\bf Case 2:} $s_n(\Psi,f)\beta_2^n< \beta_1^{-n}\psi_1(n)$ for $n\ge1$. In this case, one cannot define $\Phi$ in the same way as in Case 1, since we do not have $W_2(\Psi,\bm h)\subset W_2(\Phi,\bm h)$. Let $\Upsilon=(\upsilon_1,\upsilon_2)$ be defined by
	\[\upsilon_i(n)=\begin{cases}
		\psi_1(n)&\text{if $i=1$},\\
		s_n(\Psi,f)\beta_1^n\beta_2^n\psi_1(n)^{-1}&\text{if $i=2$}.
	\end{cases}\]
	By Theorem \ref{t:rectangleleb}, $W_2(\Upsilon,\bm h)$ has full Lebesgue measure. Write
	\[W_2(\Upsilon,\bm h)=\bigcap_{N=1}^\infty\bigcup_{n=N}^\infty\bigcup_{\bz\in\mathcal E_n}\prod_{i=1}^2B\big(z_i,\beta_i^{-n}\upsilon_i(n)\big)=:\bigcap_{N=1}^\infty\bigcup_{n=N}^\infty \hat E_n.\]
	As presented in Figure \ref{f:large} (A) and (B), we have
	\[W_2(\Psi,\bm h)\subset W_2(\Upsilon,\bm h).\]
	However, the geometric of the set $W_2(\Upsilon,\bm h)$ differs significantly from that in Case 1. To obtain the desired balls $B_k$, we divide the rectangle $\prod_{i=1}^2B(z_i,\beta_i^{-n}\upsilon_i(n))$ into smaller balls with length $\beta_2^{-n}\upsilon_2(n)=s_n(\Psi,f)\beta_1^n\psi_1(n)^{-1}$. More precisely, for each $\bz\in\ce_n$, there exists a collection $\hat\ce_n(\bz)$ of points such that
	\[\prod_{i=1}^2B\big(z_i,\beta_i^{-n}\upsilon_i(n)\big)\subset\bigcup_{\by\in\hat\ce_n(\bz)}B\big(\by,\beta_2^{-n}\upsilon_2(n)\big).\]
	For each $\by\in \hat\ce_n(\bz)$ with $\bz\in\ce_n$, let
	\[\prod_{i=1}^2B\big(z_i,\beta_i^{-n}\psi_i(n)\big)\cap B\big(\by,\beta_2^{-n}\upsilon_2(n)\big).\]
	This gives the desired $F_k\subset B_k$ (see Figure \ref{f:large} (C)). The final step is to verify
	\[\hm^f\bigg(\prod_{i=1}^2B\big(z_i,\beta_i^{-n}\psi_i(n)\big)\cap B\big(\by,\beta_2^{-n}\upsilon_2(n)\big)\bigg)\gg \lm^2\big(B\big(\by,\beta_2^{-n}\upsilon_2(n)\big)\big).\]
	\begin{figure}
		\centering
		\subfloat[$E_2$]
		{
			\label{fig:subl1}\includegraphics[width=0.3\textwidth]{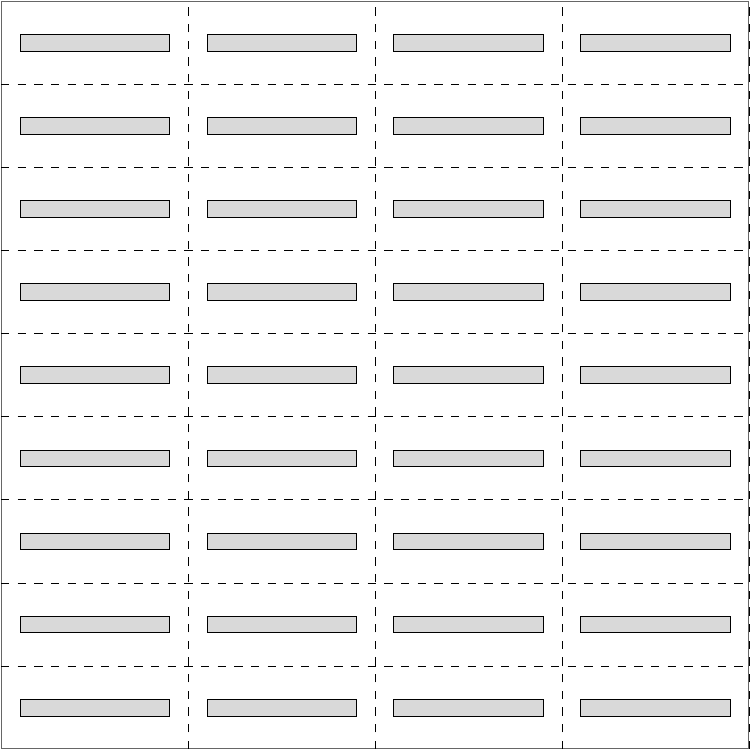}
		}
		\subfloat[$\hat E_2$]
		{
			\label{fig:subl2}\includegraphics[width=0.3\textwidth]{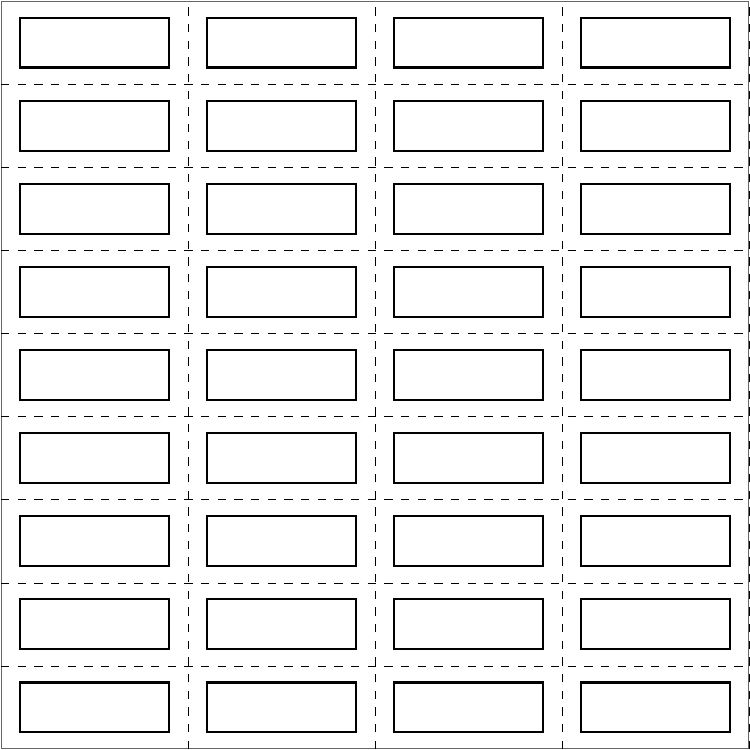}
		}
		\subfloat[desired $F_k$ and $B_k$]
		{
			\label{fig:subl3}\includegraphics[width=0.3\textwidth]{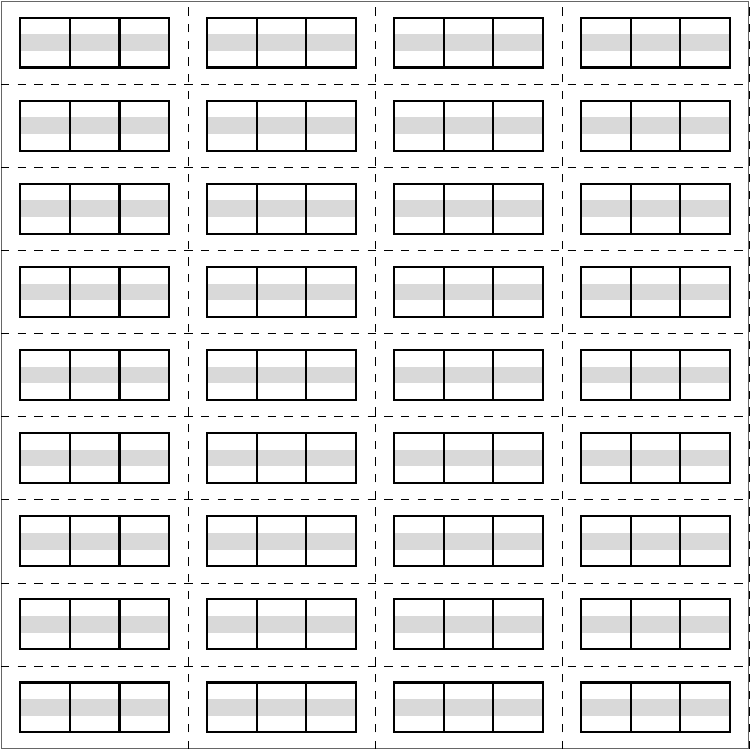}
		}
		\caption{Case 2: $s_n(\Psi,f)\beta_2^n< \beta_1^{-n}\psi_1(n)$.}
		\label{f:large}
	\end{figure}

	These two cases indeed exhaust all possibilities in $d=2$. In higher dimensions, the situation is much more complicated and requires a more in-depth analysis. Nonetheless, the geometric insights above provide valuable inspiration for extending the approach.

\subsection{Proof of Theorem \ref{t:rectangle}: Divergence case}\label{ss:completediv}
In what follows, suppose that \eqref{eq:=infty} holds, i.e.
\[\lim_{r\to0^+}f(r)/r^d=\infty.\]

Recall that $1<\beta_1\le\cdots\le\beta_d$. Let $0=k_0\le k_1<k_2<\cdots<k_s=d$ be the integers such that for $0\le j\le s-1$,
\begin{equation}
	\beta_{k_j}<\beta_{k_j+1}=\cdots=\beta_{k_{j+1}}<\beta_{k_{j+1}+1}.
\end{equation}
By pigeonhole principle, for any $0\le j\le s-1$ there is a permutation $i_{k_j+1},\dots, i_{k_j}$ of $k_{j}+1,\dots,k_{j+1}$ such that
\[\sum_{n\in P}s_n(\Psi,f)\prod_{i=1}^d\beta_i^n=\infty,\]
where
\begin{align}
	P=&\{n\in\N:\psi_{i_{k_j+1}}(n)\ge\cdots\ge\psi_{i_{k_{j+1}}}(n)\text{ for $0\le j\le s-1$}\}\notag\\
	&\cap \bigg\{n\in\N:n^{-2}\le s_n(\Psi,f)\prod_{i=1}^d\beta_i^n\le 1\bigg\}.\label{eq:sufficientlarge}
\end{align}
Note the permutation does not change $1<\beta_1\le \beta_2\le\cdots\le \beta_d$. Without loss of generalisity, we assume that for any $0\le j\le s-1$, the permutation $i_{k_j+1},\dots, i_{k_j}$ is exactly $k_{j}+1,\dots,k_{j+1}$. In other words, for $0\le j\le s-1$,
\begin{equation}\label{eq:psiincrease}
	\psi_{k_j+1}(n)\ge\cdots\ge\psi_{k_{j+1}}(n).
\end{equation}
It is worth mentioning that for $0\le j\le s-1$,
\[\text{$\psi_{k_j}(n)\ge \psi_{k_j+1}(n)$ does not necessarily hold.}\]
%Let $\tilde\Psi=(\tilde\psi_1,\dots,\tilde\psi_d)$ be defined by
%\[\tilde\Psi(n)=\begin{cases}
%	\Psi(n)&\text{if $n\in P$},\\
%	\textbf{0}& \text{otherwise}.
%\end{cases}\]
%Then, we have
%\[\sum_{n=1}^{\infty} s_n(\tilde\Psi,f)\prod_{i=1}^d\beta_i^n=\sum_{n\in P}s_n(\Psi,f)\prod_{i=1}^d\beta_i^n=\infty.\]
%To proof the divergence part of Theorem \ref{t:rectangle}, it suffices to show that $W_d(\tilde \Psi,\bm h)$ is of full Hausdorff $f$-measure, since $W_d(\tilde \Psi,\bm h)\subset W_d(\Psi,\bm h)$.

To simplify the notation, in what follows we write $s_n$ for $s_n(\Psi,f)$.
For any $n\ge \N$, let $0\le m=m(n)\le d-1$ be the smallest integer such that
\begin{equation}\label{eq:defmkj}
	\bigg(s_n\prod_{i=1}^{m}\psi_i(n)^{-1}\prod_{i=1}^{d}\beta_i^n\bigg)^{\frac{1}{k_j-m}}\ge\psi_{m+1}(n),
\end{equation}
where $k_j=k_j(n)$ is chosen so that $k_{j-1}\le m<k_j$ with $1\le j\le s$. If $m=0$, the product $\prod_{i=1}^{m}\psi_i(n)^{-1}$ is regarded as $1$.
Such $m$ exists for all large enough $n$, since for $m=d-1$ by \eqref{eq:sn>},
\[\bigg(s_n\prod_{i=1}^{m}\psi_i(n)^{-1}\prod_{i=1}^{d}\beta_i^n\bigg)^{\frac{1}{k_s-m}}=s_n\prod_{i=1}^{d-1}\psi_i(n)^{-1}\prod_{i=1}^{d}\beta_i^n\ge \psi_d(n).\]
Here and below, the dependence of $m$ and $k_j$ on $n$ will be implicitly understood and not explicitly stated.

\begin{lem}\label{l:increase}
	We have
	\begin{equation*}
		\begin{split}
			\bigg(s_n\prod_{i=1}^{m}\psi_i(n)^{-1}\prod_{i=1}^{d}\beta_i^n\bigg)^{\frac{1}{k_j-m}}\le\cdots\le \bigg(s_n\prod_{i=1}^{k_{j-1}}\psi_i(n)^{-1}\prod_{i=1}^{d}\beta_i^n\bigg)^{\frac{1}{k_j-k_{j-1}}}.
		\end{split}
	\end{equation*}
\end{lem}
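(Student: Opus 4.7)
The plan is to exploit a simple multiplicative identity linking consecutive terms in the claimed chain with $\psi_{\ell+1}(n)$. Abbreviating
\[
A_\ell:=\bigg(s_n\prod_{i=1}^{\ell}\psi_i(n)^{-1}\prod_{i=1}^{d}\beta_i^n\bigg)^{1/(k_j-\ell)}\qquad\text{for }k_{j-1}\le\ell\le k_j-1,
\]
the lemma asserts $A_m\le A_{m-1}\le\cdots\le A_{k_{j-1}}$, and the defining inequality \eqref{eq:defmkj} of $m$ reads $A_m\ge\psi_{m+1}(n)$. Separating the $i=\ell+1$ factor in the product gives the telescoping identity
\[
A_\ell^{k_j-\ell}=A_{\ell+1}^{k_j-\ell-1}\cdot\psi_{\ell+1}(n),
\]
which is the single structural fact driving the argument.

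From this identity one immediately reads off the implication $A_\ell\le\psi_{\ell+1}(n)\Rightarrow A_{\ell+1}\le A_\ell$: indeed, the hypothesis gives $A_\ell^{k_j-\ell}\ge A_{\ell+1}^{k_j-\ell-1}\cdot A_\ell$, hence $A_\ell^{k_j-\ell-1}\ge A_{\ell+1}^{k_j-\ell-1}$, and since $\ell<m\le k_j-1$ the exponent $k_j-\ell-1$ is strictly positive, so $A_\ell\ge A_{\ell+1}$. Thus it suffices to verify $A_\ell\le\psi_{\ell+1}(n)$ for every $\ell$ with $k_{j-1}\le\ell<m$.

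This is precisely the content of the minimality of $m$ in \eqref{eq:defmkj}. For any such $\ell$, the index $\ell$ lies in the same block $\{k_{j-1},\ldots,k_j-1\}$ as $m$ (because $k_{j-1}\le\ell<m<k_j$), so the value of $k_j$ appearing in the definition of $A_\ell$ is unchanged when $m$ is replaced by $\ell$. Minimality of $m$ then forces the strict inequality $A_\ell<\psi_{\ell+1}(n)$, and chaining the implication $A_\ell\le\psi_{\ell+1}(n)\Rightarrow A_{\ell+1}\le A_\ell$ over $\ell=m-1,m-2,\ldots,k_{j-1}$ delivers the lemma. No genuine obstacle is expected; the only point deserving attention is verifying that every index $\ell$ appearing in the chain shares the same $k_j$ with $m$, which is automatic thanks to the range restriction $\ell\ge k_{j-1}$ built into the statement.
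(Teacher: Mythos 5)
Your proposal is correct and follows essentially the same route as the paper: both rest on the telescoping identity $A_\ell^{k_j-\ell}=A_{\ell+1}^{k_j-\ell-1}\psi_{\ell+1}(n)$ (the paper writes it multiplicatively before taking roots) combined with the minimality of $m$ in \eqref{eq:defmkj}, which gives $A_\ell<\psi_{\ell+1}(n)$ for $k_{j-1}\le\ell<m$ since such $\ell$ share the same $k_j$. The only difference is cosmetic: the paper raises the resulting inequality to the power $1/((k_j-\ell)(k_j-\ell-1))$, while you divide by $A_\ell$ and take $(k_j-\ell-1)$th roots.
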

\begin{proof}
	Let $k_{j-1}\le \ell\le m-1$. We have
	\[\begin{split}
		&\bigg(s_n\prod_{i=1}^{\ell}\psi_i(n)^{-1}\prod_{i=1}^{d}\beta_i^n\bigg)^{k_j-(\ell+1)}\\
		=&\bigg(s_n\prod_{i=1}^{\ell+1}\psi_i(n)^{-1}\prod_{i=1}^{d}\beta_i^n\bigg)^{k_j-\ell}\bigg(s_n\prod_{i=1}^{\ell}\psi_i(n)^{-1}\prod_{i=1}^d\beta_i^n\bigg)^{-1}\psi_{\ell+1}(n)^{k_j-\ell}\\
		\ge&\bigg(s_n\prod_{i=1}^{\ell+1}\psi_i(n)^{-1}\prod_{i=1}^{d}\beta_i^n\bigg)^{k_j-\ell},
	\end{split}\]
	where the last inequality follows from the definition of $m$ (see \eqref{eq:defmkj}).
	Inequality raised to the power of $1/((k_l-\ell)(k_l-\ell-1))$ implies the lemma.
\end{proof}
Let
\begin{equation}\label{eq:omegan}
	\omega_n:=\beta_{m+1}^{-n}\bigg(s_n\prod_{i=1}^{m}\psi_i(n)^{-1}\prod_{i=1}^{d}\beta_i^n\bigg)^{\frac{1}{k_j-m}}=\bigg(s_n\prod_{i=1}^{m}\beta_i^n\psi_i(n)^{-1}\prod_{i=k_j+1}^d\beta_i^n\bigg)^{\frac{1}{k_j-m}},
\end{equation}
where the last equality follows from $\beta_{m+1}=\cdots=\beta_{k_j}$.
As a consequence of Lemma \ref{l:increase}, $\omega_n$ satisfies the following properties.
\begin{lem}\label{l:omeganp}
	For any sufficiently large $n$ with $n\in P$, we have
	\begin{enumerate}[\upshape(1)]
		\item For $1\le \ell\le m$, we have $\omega_n\le\beta_\ell^{-n}\psi_\ell(n)$;
		\item For $m+1\le \ell\le k_j$, we have $	\beta_\ell^{-n}\psi_\ell(n)\le \omega_n\le\beta_\ell^{-n}$;
		\item For $k_j+1\le \ell\le d$, we have $\omega_n\ge \beta_\ell^{-n}$.
	\end{enumerate}
\end{lem}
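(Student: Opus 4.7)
The plan is to verify each of the three inequalities in turn, drawing on \eqref{eq:defmkj}, the minimality of $m$, Lemma~\ref{l:increase} (which provides intra-cluster monotonicity), the $\psi$-ordering \eqref{eq:psiincrease}, and the two-sided control $n^{-2}\le s_n\prod_{i=1}^d\beta_i^n\le 1$ afforded by $n\in P$ via \eqref{eq:sufficientlarge}. Without loss of generality I assume $\psi_i(n)\le 1$, and I set $M:=s_n\prod_{i=1}^m\psi_i(n)^{-1}\prod_{i=1}^d\beta_i^n$, so that $\omega_n=\beta_{m+1}^{-n}M^{1/(k_j-m)}$.

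I would begin with (2). The lower bound at $\ell=m+1$ is just a rewriting of \eqref{eq:defmkj}, while the upper bound at $\ell=m+1$ is equivalent to $M\le 1$. In the generic case $m>k_{j-1}$ this follows from the minimality of $m$ at $\ell=m-1$ (giving $M<\psi_m(n)^{k_j-m}\le 1$); the boundary case $m=k_{j-1}$ admits a short variant using the cluster $j-1$ exponent, and $m=0$ is immediate from \eqref{eq:sufficientlarge}. For $m+1<\ell\le k_j$ both bounds extend automatically because $\beta_\ell=\beta_{m+1}$ and $\psi_\ell(n)\le\psi_{m+1}(n)$ by \eqref{eq:psiincrease}.

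For (1), the key observation is that for $k_{j-1}<\ell\le m$ Lemma~\ref{l:increase} combined with the minimality of $m$ at $\ell-1$ yields $M^{1/(k_j-m)}\le(s_n\prod_{i=1}^{\ell-1}\psi_i(n)^{-1}\prod_{i=1}^d\beta_i^n)^{1/(k_j-\ell+1)}<\psi_\ell(n)$, so $\omega_n<\beta_\ell^{-n}\psi_\ell(n)$ since $\beta_\ell=\beta_{m+1}$. For $\ell$ in a strictly earlier cluster the same scheme has to be iterated: an analog of Lemma~\ref{l:increase} holds verbatim inside each cluster $[k_{j''-1}+1,k_{j''}]$, and one pushes the bound across successive cluster boundaries by invoking the minimality of $m$ at each boundary, combined with polynomial-versus-exponential estimates from $n\in P$ to absorb the factors $(\beta_{m+1}/\beta_{k_{j''}})^n$ arising when the $\beta$-scale jumps.

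For (3), since $\beta_\ell\ge\beta_{k_j+1}>\beta_{m+1}$ for $\ell>k_j$ it suffices to treat $\ell=k_j+1$; the inequality $\omega_n\ge\beta_{k_j+1}^{-n}$ is algebraically equivalent to $M\ge(\beta_{m+1}/\beta_{k_j+1})^{n(k_j-m)}$, whose right-hand side decays exponentially in $n$, whereas $n\in P$ together with $\psi_i(n)\le 1$ forces $M\ge s_n\prod_{i=1}^d\beta_i^n\ge n^{-2}$, giving the claim for all sufficiently large $n\in P$. The main obstacle is the cross-cluster part of (1): Lemma~\ref{l:increase} is intrinsically intra-cluster, and controlling $\omega_n$ against $\beta_\ell^{-n}\psi_\ell(n)$ when $\ell$ lies in a cluster strictly earlier than the one containing $m$ requires simultaneously patching the intra-cluster analogs through cluster endpoints while absorbing the exponential jumps in $\beta$---delicate bookkeeping that is the technical heart of the argument.
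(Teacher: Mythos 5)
Your treatment of (2), of (3), and of the intra-cluster indices $k_{j-1}<\ell\le m$ in (1) is correct and is essentially the paper's own argument: the upper bound in (2) is exactly the verification that $M\le 1$ via minimality of $m$ at $m-1$ (with the boundary cases $m=k_{j-1}$ and $m=0$ you mention), the lower bound is \eqref{eq:defmkj} propagated through the cluster by \eqref{eq:psiincrease} and $\beta_{m+1}=\cdots=\beta_{k_j}$, the intra-cluster part of (1) is Lemma~\ref{l:increase} plus minimality at $\ell-1$, and (3) is the polynomial-versus-exponential comparison of $M\ge s_n\prod_{i=1}^{d}\beta_i^n\ge n^{-2}$ against $(\beta_{m+1}/\beta_{k_j+1})^{n(k_j-m)}$.

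The one place you deviate is the cross-cluster case $1\le\ell\le k_{j-1}$ of (1), which you flag as the ``technical heart'' and leave as unexecuted bookkeeping; this is also the only genuinely incomplete step, and the iteration you sketch (intra-cluster analogues of Lemma~\ref{l:increase} pushed across cluster boundaries) is unnecessary. The paper closes this case in one step, from ingredients you already listed. Since $\ell-1<m$, the minimality of $m$ at $\ell-1$ (with $k_l$ the cluster endpoint for $\ell-1$, i.e.\ $k_{l-1}\le\ell-1<k_l$) gives
\[\psi_\ell(n)>\bigg(s_n\prod_{i=1}^{\ell-1}\psi_i(n)^{-1}\prod_{i=1}^{d}\beta_i^n\bigg)^{\frac{1}{k_l-\ell+1}}\ge \bigg(s_n\prod_{i=1}^{d}\beta_i^n\bigg)^{\frac{1}{k_l-\ell+1}}\ge s_n\prod_{i=1}^{d}\beta_i^n\ge n^{-2},\]
using $\psi_i(n)\le 1$ and $s_n\prod_{i=1}^{d}\beta_i^n\le 1$ from \eqref{eq:sufficientlarge}. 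Since $\ell\le k_{j-1}$ forces $\beta_\ell<\beta_{m+1}$, and there are only finitely many possible ratios $\beta_\ell/\beta_{m+1}<1$, for all sufficiently large $n\in P$ one has
\[\omega_n\le\beta_{m+1}^{-n}=\big(\beta_\ell/\beta_{m+1}\big)^{n}\,\beta_\ell^{-n}<n^{-2}\beta_\ell^{-n}<\beta_\ell^{-n}\psi_\ell(n),\]
where $\omega_n\le\beta_{m+1}^{-n}$ is exactly your bound $M\le 1$ from (2). So no cluster-by-cluster propagation or absorption of $\beta$-jumps at each boundary is needed: the polynomial lower bound on $\psi_\ell(n)$ plus the single exponential gap between $\beta_\ell$ and $\beta_{m+1}$ suffices. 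Your sketched iteration could likely be made to work, but as written it is a plan rather than a proof of the only nontrivial case.
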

\begin{proof}
	(1) For any $n\in P$ and any $1\le \ell\le k_{j-1}$ with $k_{l-1}\le \ell-1<k_l$, by the definition of $m$ (see \eqref{eq:defmkj}),
	\[\psi_\ell(n)> \bigg(s_n\prod_{i=1}^{\ell-1}\psi_i(n)^{-1}\prod_{i=1}^{d}\beta_i^n\bigg)^{\frac{1}{k_l-\ell-1}}\ge s_n\prod_{i=1}^{d}\beta_i^n\ge n^{-2},\]
	where the last inequality follows from \eqref{eq:sufficientlarge}.
	Since $\ell\le k_{j-1}\le m$, we have $\beta_\ell<\beta_{m+1}$. So, for large $n$ with $n\in P$,
	\[\omega_n\le \beta_{m+1}^{-n}<\beta_\ell^{-n}n^{-2}<\beta_\ell^{-n}\psi_\ell(n).\]

	For $k_{j-1}<\ell\le m<k_j$, we have $\beta_\ell=\beta_{m+1}$. By Lemma \ref{l:increase} and \eqref{eq:defmkj},
	\[\begin{split}
		\omega_n&=\beta_{m+1}^{-n}\bigg(s_n\prod_{i=1}^{m}\psi_i(n)^{-1}\prod_{i=1}^{d}\beta_i^n\bigg)^{\frac{1}{k_j-m}}\\
		&\le \beta_\ell^{-n}\bigg(s_n\prod_{i=1}^{\ell-1}\psi_i(n)^{-1}\prod_{i=1}^{d}\beta_i^n\bigg)^{\frac{1}{k_j-\ell-1}}<\beta_{\ell}^{-n}\psi_\ell(n).
	\end{split}\]

	(2) For sufficiently large $n\in P$, since $\beta_{m+1}=\cdots=\beta_{k_j}$, by \eqref{eq:sufficientlarge} and \eqref{eq:defmkj},
	\[\begin{split}
		\beta_{m+1}^{-n}=\cdots=\beta_{k_j}^{-n}&\ge\beta_{m+1}^{-n}\bigg(s_n\prod_{i=1}^{m}\psi_i(n)^{-1}\prod_{i=1}^{d}\beta_i^n\bigg)^{\frac{1}{k_j-m}}\\
		&\ge \beta_{m+1}^{-n}\psi_{m+1}(n)\ge \cdots\ge\beta_{k_j}^{-n}\psi_{k_j}(n).
	\end{split}\]

	(3) For $k_j+1\le\ell\le d$, we have $\beta_{m+1}<\beta_\ell$. For large $n\in P$, by  \eqref{eq:sufficientlarge},
	\[\beta_{m+1}^{-n}\bigg(s_n\prod_{i=1}^{m}\psi_i(n)^{-1}\prod_{i=1}^{d}\beta_i^n\bigg)^{\frac{1}{k_j-m}}\ge \beta_{m+1}^{-n}n^{-2}>\beta_\ell^{-n},\]
	where we have used $\psi_i(n)\le 1$ for all $1\le i\le d$.
\end{proof}
Define a new $d$-tuple of functions $\Phi=(\phi_1,\dots,\phi_d)$ as follows: For $n\notin P$,
\[\phi_1(n)=\cdots=\phi_d(n)=0;\]
and for $n\in P$,
\[\phi_i(n)=\begin{dcases}
	\psi_i(n)/4& \text{if $1\le i\le m$},\\
	\beta_{m+1}^n\omega_n/4&\text{if $m+1\le i\le k_j$},\\
	1/4&\text{if $k_j+1\le i\le d$.}
\end{dcases}\]
Clearly, we have
\[\sum_{n=1}^{\infty}\prod_{i=1}^d\phi_i(n)=\sum_{n\in P} 4^{-d}s_n\prod_{i=1}^d\beta_i^n=\infty.\]
Therefore, by Theorem \ref{t:rectangleleb} the set $W_d(\Phi,\bm h)$ is of full Lebesgue measure.

Recall the definition of $\ce_n$ in \eqref{eq:wd}. Since we are dealing with the integral case, all the cylinders under consideration are full. Therefore, $\ce_n$ is independent of the $\Psi$ and $\Phi$. Therefore, by Lemma \ref{l:length},
\[\begin{split}
	W_d(\Phi,\bm h)&\subset\bigcap_{N=1}^\infty\bigcup_{n\in P:n\ge N}\bigcup_{\bz\in\mathcal E_n}\prod_{i=1}^{d}B\big(z_i,2\beta_i^{-n}\phi_i(n)\big)\\
	&\subset\bigcap_{N=1}^\infty\bigcup_{n\in P:n\ge N}\bigcup_{\bz\in\mathcal E_n}\bigg(\prod_{i=1}^mB\big(z_i,\beta_i^{-n}\psi_i(n)/2\big)\prod_{i=m+1}^{d}B(z_i,\omega_n)\bigg),
\end{split}\]
since by Lemma \ref{l:omeganp} (2) and (3), $\beta_i^{-n}\psi_i(n)\le\omega_n$ for $m+1\le i\le d$.
For any $\bz\in\mathcal E_n$, there is a finite collection $\mathcal B_n(\bz)$ of points such that for any $\by=(y_1,\dots,y_d)\in\mathcal B_n(\bz)$
\begin{equation}\label{eq:by}
	y_i\in B\big(z_i,\beta_i^{-n}\psi_i(n)/2\big)\text{ for $1\le i\le m$, and } y_i=z_i\text{ for $m+1\le i\le d$},
\end{equation}
and
\[\prod_{i=1}^mB\big(z_i,\beta_i^{-n}\psi_i(n)/2\big)\prod_{i=m+1}^{d}B(z_i,\omega_n)\subset\bigcup_{\by\in\mathcal B_n(\bz)}B(\by,\omega_n).\]
Then,
\[W_d(\Phi,\bm h)\subset\bigcap_{N=1}^\infty\bigcup_{n\in P:n\ge N}\bigcup_{\bz\in\mathcal E_n}\bigcup_{\by\in\mathcal B_n(\bz)}B(\by,\omega_n),\]
and so the rightmost $\limsup$ set defined by the collection
\begin{equation}\label{eq:limsupsetfullmea}
	\{B(\by,\omega_n):\by\in\mathcal B_n(\bz)\text{ with }\bz\in\mathcal E_n \text{ and }n\in P\}
\end{equation}
 of balls is of full Lebesgue measure. By \eqref{eq:wd},
\begin{equation}\label{eq:subsetmeasure}
	\bigcap_{N=1}^\infty\bigcup_{n\in P:n\ge N}\bigcup_{\bz\in\mathcal E_n}\bigcup_{\by\in\mathcal B_n(\bz)}\bigg(B(\by,\omega_n)\cap\bigcup_{\bz\in\mathcal E_n}\prod_{i=1}^{d}B\big(z_i,\beta_i^{-n}\psi_i(n)/2\big)\bigg)\subset W_d(\Psi,\bm h).
\end{equation}
By Theorem \ref{t:weaken} (1), to conclude that $W_d(\Psi,\bm h)$ has full Hausdorff $f$-measure, it is sufficient to prove that for all sufficiently large $n\in P$ and any $\by\in\cb_n(\bz)$ with $\bz\in\ce_n$,
\begin{equation}\label{eq:verify}
	\mathcal H_\infty^f\bigg(B(\by,\omega_n)\cap\bigcup_{\bz\in\mathcal E_n}\prod_{i=1}^{d}B\big(z_i,\beta_i^{-n}\psi_i(n)/2\big)\bigg)\gg \lm^d\big(B(\by,\omega_n)\big)\asymp \omega_n^d.
\end{equation}

From now on, fix a ball $B(\by,\omega_n)$ with $\by=(y_1,\dots,y_d)\in\mathcal B_n(\bz)$ and $\bz=(z_1,\dots,z_d)\in\mathcal E_n$. It follows from \eqref{eq:by} that:
\begin{enumerate}[\upshape(1)]
	\item For $1\le i\le m$, since $y_i\in B(z_i, \beta_i^{-n}\psi_i(n)/2) $ and $\omega_n\le\beta_i^{-n}\psi_i(n)$ (see Lemma \ref{l:omeganp} (1)),
	\[B(y_i,\omega_n)\cap\bigcup_{z_i\in\ce_{n,i}} B\big(z_i,\beta_i^{-n}\psi_i(n)/2\big)\]
	contains an interval of length $\omega_n$.
	\item For $m+1\le i\le k_j$, since $y_i=z_i$ and $\beta_i^{-n}\psi_i(n)\le \omega_n\le \beta_i^{-n}$ (see Lemma \ref{l:omeganp} (2)),
	\[B(y_i,\omega_n)\cap \bigcup_{z_i\in\mathcal E_{n,i}}B\big(z_i,\beta_i^{-n}\psi_i(n)/2\big)\]
	contains an interval of length $\beta_i^{-n}\psi_i(n)$.
	\item For $k_j+1\le i\le d$, since $y_i=z_i$ and $\beta_i^{-n}\psi_i(n)\le \beta_i^{-n}\le\omega_n$ (see Lemma \ref{l:omeganp} (3)),
	\[B(y_i,\omega_n)\cap \bigcup_{z_i\in\mathcal E_{n,i}}B\big(z_i,\beta_i^{-n}\psi_i(n)/2\big)\]
	contains $\asymp\omega_n \beta_i^n$ intervals, each of which is of length $\beta_i^{-n}\psi_i(n)$. Moreover, they are separated by a distance at least
	\begin{equation}\label{eq:separation}
		\beta_i^{-n}.
	\end{equation}
\end{enumerate}

Recall that $\ce_n=\ce_{n,1}\times\cdots\times \ce_{n,d}$. By the above discussion,
\begin{equation*}\label{eq:setcontent}
	B(\by,\omega_n)\cap\bigcup_{\bz\in\mathcal E_n}\prod_{i=1}^{d}B\big(z_i,\beta_i^{-n}\psi_i(n)/2\big)=\prod_{i=1}^d \bigcup_{z_i\in\ce_{n,i}}B(y_i,\omega_n)\cap B(z_i,\beta_i^{-n}\psi_i(n)/2)
\end{equation*}
contains one interval in $i$th direction with $1\le i\le k_j$, and contains $\asymp\omega_n \beta_i^n$ intervals in $i$th direction with $k_j+1\le i\le d$. Therefore, the above set contains
\[\asymp\prod_{i=k_j+1}^{d}\omega_n\beta_i^n\]
hyperrectangles, each of which is of lengths
\begin{equation}\label{eq:reclength}
	\underbrace{\omega_n,\dots,\omega_n}_{m},\beta_{m+1}^{-n}\psi_{m+1}(n),\dots,\beta_d^{-n}\psi_d(n).
\end{equation}

Let $\Delta$ be collection of these hyperrectangles.
Define a probability measure by
\begin{equation}\label{eq:mu}
	\mu=\frac{1}{\#\Delta}\sum_{R\in \Delta}\frac{\lm^d|_R}{\lm^d(R)}\asymp\frac{1}{\prod_{i=k_j+1}^{d}\omega_n\beta_i^n}\cdot\frac{1}{\omega_n^m\prod_{i=m+1}^{d}\beta_i^{-n}\psi_i(n)}\sum_{R\in \Delta}\lm^d|_R.
\end{equation}
Since $\omega_n<\beta_{m+1}^{-n}=\cdots=\beta_{k_j}^{-n}\le\beta_m^{-n}\cdots\le \beta_1^{-n}$ and $\omega_n\le \beta_i^{-n}\psi_i(n)$ for $1\le i\le m$, we define
\begin{equation}\label{eq:ca'}
	\ca_n':=\{\beta_{k_j+1}^{-n},\dots,\beta_d^{-n},\beta_{m+1}^{-n}\psi_{m+1}(n),\dots,\beta_d^{-n}\psi_d(n)\}.
\end{equation}

Now we estimate the $\mu$-measure of arbitrary balls $B(\bx,r)=\prod_{i=1}^dB(x_i,r)$ with $\bx\in R^*$ and $R^*\in\Delta$. The estimation is devided into three cases.

\noindent\textbf{Case 1:} $0<r<\min_{\tau\in\ca_n'}\tau=:\tau_{\min}$. It follows that $r<\beta_d^{-n}\le\cdots\le\beta_{k_j+1}^{-n}$. By the separation properties of $\Delta$ (see \eqref{eq:separation}),  we see that $B(\bx,r)$ only intersects the hyperrectangle $R^*$ to which $\bx$ belongs. By the definition of $\mu$ (see \eqref{eq:mu}),
\begin{align*}
	\mu\big(B(\bx,r)\big)&\asymp\frac{1}{\prod_{i=k_j+1}^{d}\omega_n\beta_i^n}\cdot\frac{1}{\omega_n^m\prod_{i=m+1}^{d}\beta_i^{-n}\psi_i(n)}\lm^d|_{R^*}\big(B(\bx,r)\big)\\
	&\ll\frac{1}{\prod_{i=k_j+1}^{d}\omega_n\beta_i^n}\cdot\frac{1}{\omega_n^m\prod_{i=m+1}^{d}\beta_i^{-n}\psi_i(n)}\cdot r^d\\
	&\ll\frac{1}{\prod_{i=k_j+1}^{d}\omega_n\beta_i^n}\cdot\frac{1}{\omega_n^m\prod_{i=m+1}^{d}\beta_i^{-n}\psi_i(n)}\cdot \frac{f(r)\tau_{\min}^{d}}{f(\tau_{\min})},
\end{align*}
where we use $f\preceq d$ in the last inequality.
Since $\tau_{\min}=\min_{\tau\in\mathcal A_{n}'}\tau=\min_{\tau\in\mathcal A_{n}}\tau$, we have
\[\ck_{n,1}(\tau_{\min})=\emptyset\quad\text{and}\quad\ck_{n,2}(\tau_{\min})=\{1,\dots,d\},\]
and so
\[s_n\le f(\tau_{\min})\prod_{i\in\ck_{n,1}(\tau_{\min})}\frac{\beta_i^{-n}}{\tau_{\min}}\prod_{i\in\ck_{n,2}(\tau_{\min})}\frac{\beta_i^{-n}\psi_i(n)}{\tau_{\min}}=f(\tau_{\min})\prod_{i=1}^d\frac{\beta_i^{-n}\psi_i(n)}{\tau_{\min}}\]
Thus,
\begin{equation}\label{eq:case1}
	\mu\big(B(\bx,r)\big)\ll\frac{1}{\prod_{i=k_j+1}^d\omega_n\beta_i^n}\cdot\frac{f(r)}{\omega_n^ms_n\prod_{i=1}^{m}\beta_i^n\psi_i(n)^{-1}}=\frac{f(r)}{\omega_n^d},
\end{equation}
where the equality follows from
\begin{align}
	\bigg(\prod_{i=k_j+1}^d\omega_n\beta_i^n\bigg)\omega_n^ms_n\prod_{i=1}^{m}\beta_i^n\psi_i(n)^{-1}&=\omega_n^{d-k_j+m}\bigg(s_n\prod_{i=1}^{m}\beta_i^n\psi_i(n)^{-1}\prod_{i=k_j+1}^d\beta_i^n\bigg)\notag\\
	&=\omega_n^d.\label{eq:identity}
\end{align}
Here we use \eqref{eq:omegan} in the last equality.

\noindent\textbf{Case 2:} Arrange the elements in $\ca_n'$ in non-descending order. Suppose that $\tau_\ell\le r<\tau_{\ell+1}$ with $\tau_\ell$ and $\tau_{\ell+1}$ are two consecutive terms in $\ca_n'$. By \eqref{eq:mu},
\begin{equation}\label{eq:step1}
	\mu\big(B(\bx,r)\big)
	\ll \frac{1}{\prod_{i=k_j+1}^{d}\omega_n\beta_i^n}\cdot\frac{1}{\omega_n^m\prod_{i=m+1}^{d}\beta_i^{-n}\psi_i(n)}\sum_{R:R\cap B(\bx,r)\ne\emptyset}\lm^d|_R\big(B(\bx,r)\big).
\end{equation}

Next, we estimate the number of hyperrectangles $R\in\Delta$ that intersects $B(\bx,r)$ and the $\lm^d$-measure of the corresponding intersection $B(\bx,r)\cap R$. For this purpose, define the sets
\[\ck_{n,1}(\tau_\ell)=\{i:\beta_i^{-n}\le \tau_\ell\},\ \ck_{n,2}(\tau_{\ell+1})=\{i:\beta_i^{-n}\psi_i(n)\ge \tau_{\ell+1}\},\]
\[\ck_{n,3}=\{1,\dots,d\}\setminus\big(\ck_{n,1}(\tau_\ell)\cup\ck_{n,2}(\tau_{\ell+1})\big).\]
It should be noticed the above sets we are going to use are slightly different to those given in Theorem \ref{t:rectangle}. The estimation is built upon the following observations:

\noindent{\textbf{Observation A}}: in the directions $i\in\ck_{n,1}(\tau_\ell)$,
\[r\ge \tau_\ell\ge \beta_i^{-n}.\]
So the total number of hyperrectangles that intersecting $B(\bx,r)$ along the $i$th direction is majorized by
\begin{equation}\label{eq:Ni}
	\ll r\beta_i^n.
\end{equation}
Since $\tau_\ell< \tau_{\ell+1}\le\max_{\tau\in\ca'_n}\tau< \beta_{m+1}^{-n}$ (by the definition of $\ca_n'$), we have $i>m+1$. Then, by \eqref{eq:reclength}, the length of $R$ in $i$th direction is $\beta_i^{-n}\psi_i(n)$. Therefore, for any $R\in\Delta$, its intersection with $B(\bx,r)$ in the $i$th direction is of length at most
\begin{equation}	\label{eq:mea1}
		\ll \beta_i^{-n}\psi_i(n).
\end{equation}

\noindent{\textbf{Observation B}}: in the directions $i\in\ck_{n,2}(\tau_{\ell+1})$,
\[r<\tau_{\ell+1}\le \beta_i^{-n}\psi_i(n)\le \beta_i^{-n}.\]
The ball $B(\bx,r)$ only intersect one hyperrectangle in the $i$th direction. Consequently, for any
$R\in\Delta$, the length of its intersection with $B(\bx,r)$ along $i$th direction is at most
\begin{equation}\label{eq:mea2}
	\ll r.
\end{equation}

\noindent{\textbf{Observation C}}: in the directions $i\in\ck_{n,3}$, since $\tau_\ell$ and $\tau_{\ell+1}$ are two consecutive terms in $\ca_n'$, we have $\beta_i^{-n}\ge\tau_{\ell+1}>\tau_\ell $, and so
\[\tau_\ell\le r<\tau_{\ell+1}\le \beta_i^{-n}.\]
This means that $B(\bx,r)$ only intersects one hyperrectangle in the $i$th direction. Hence, the length of the intersection $B(\bx,r)\cap R$ in $i$th direction is at most
\begin{equation}\label{eq:mea3}
	\ll \beta_i^{-n}\psi_i(n).
\end{equation}
Thus, by \eqref{eq:Ni}--\eqref{eq:mea3},
\[\begin{split}
	&\sum_{R:R\cap B(\bx,r)\ne\emptyset}\lm^d|_R\big(B(\bx,r)\big)\\
	\ll & \prod_{i\in\ck_{n,1}(\tau_\ell)}r\psi_i(n)\prod_{i\in\ck_{n,2}(\tau_{\ell+1})}r\prod_{i\in\ck_{n,3}}\beta_i^{-n}\psi_i(n)\\
	=&\prod_{i=1}^d\beta_i^{-n}\psi_i(n)\prod_{i\in\ck_{n,1}(\tau_\ell)}\frac{r}{\beta_i^{-n}} \prod_{i\in\ck_{n,2}(\tau_{\ell+1})}\frac{r}{\beta_i^{-n}\psi_i(n)}.
\end{split}\]
This together with \eqref{eq:step1} gives
\[\mu\big(B(x,r)\big)
\ll\frac{1}{\prod_{i=k_j+1}^{d}\omega_n\beta_i^n}\cdot\frac{1}{\omega_n^m\prod_{i=1}^{m}\beta_i^{n}\psi_i(n)^{-1}}\prod_{i\in\ck_{n,1}(\tau_\ell)}\frac{r}{\beta_i^{-n}} \prod_{i\in\ck_{n,2}(\tau_{\ell+1})}\frac{r}{\beta_i^{-n}\psi_i(n)}.\]
Write $k:=\#(\ck_{n,1}(\tau_\ell)\cup\ck_{n,2}(\tau_{\ell+1}))$. By the condition on $f$ (see Theorem \ref{t:rectangle}), either $k\preceq f$ or $f\preceq k$. Therefore, since $\tau_\ell\le r<\tau_{\ell+1}$, by \eqref{eq:prec} either
\[\frac{r^k}{f(r)}\le \frac{\tau_{\ell}^k}{f(\tau_{\ell})}\quad\text{or}\quad \frac{f(\tau_{\ell+1})}{\tau_{\ell+1}^k}\le \frac{f(r)}{r^k}.\]
Suppose we are in the formal case. Substituting $k=\#(\ck_{n,1}(\tau_\ell)\cup\ck_{n,2}(\tau_{\ell+1}))$, we have
\[\begin{split}
	&\mu\big(B(x,r)\big)\\
	\ll&\frac{1}{\prod_{i=k_j+1}^{d}\omega_n\beta_i^n}\cdot\frac{1}{\omega_n^m\prod_{i=1}^{m}\beta_i^{n}\psi_i(n)^{-1}}\cdot\frac{f(r)}{f(\tau_\ell)}\prod_{i\in\ck_{n,1}(\tau_\ell)}\frac{\tau_\ell}{\beta_i^{-n}} \prod_{i\in\ck_{n,2}(\tau_{\ell+1})}\frac{\tau_\ell}{\beta_i^{-n}\psi_i(n)}.
\end{split}\]
We claim that
\begin{equation}\label{eq:claim}
	\prod_{i\in\ck_{n,2}(\tau_{\ell+1})}\frac{\tau_\ell}{\beta_i^{-n}\psi_i(n)}=\prod_{i\in\ck_{n,2}(\tau_\ell)}\frac{\tau_\ell}{\beta_i^{-n}\psi_i(n)}.
\end{equation}
Clearly, $\ck_{n,2}(\tau_{\ell+1})\subset\ck_{n,2}(\tau_\ell)$. If $\ck_{n,2}(\tau_{\ell+1})=\ck_{n,2}(\tau_\ell)$, then there is nothing to be proved. Now, suppose that $\ck_{n,2}(\tau_{\ell+1})\subsetneqq\ck_{n,2}(\tau_\ell)$. Since $\tau_\ell<\tau_{\ell+1}$ are two consecutive terms in $\ca_n'$, one has
\[\tau_\ell=\beta_{i_0}^{-n}\psi_{i_0}(n) \quad\text{for some $i_0$},\]
which implies $\ck_{n,2}(\tau_\ell)=\ck_{n,2}(\tau_{\ell+1})\cup\{i_0\}$. Therefore, \eqref{eq:claim} follows. By the definiton of $s_n$,
\[\begin{split}
	&\mu\big(B(x,r)\big)\\
	\ll&\frac{1}{\prod_{i=k_j+1}^{d}\omega_n\beta_i^n}\cdot\frac{1}{\omega_n^m\prod_{i=1}^{m}\beta_i^{n}\psi_i(n)^{-1}}\cdot\frac{f(r)}{f(\tau_\ell)}\prod_{i\in\ck_{n,1}(\tau_\ell)}\frac{\tau_\ell}{\beta_i^{-n}} \prod_{i\in\ck_{n,2}(\tau_{\ell+1})}\frac{\tau_\ell}{\beta_i^{-n}\psi_i(n)}\\
	=&\frac{1}{\prod_{i=k_j+1}^{d}\omega_n\beta_i^n}\cdot\frac{1}{\omega_n^m\prod_{i=1}^{m}\beta_i^{n}\psi_i(n)^{-1}}\cdot\frac{f(r)}{f(\tau_\ell)}\prod_{i\in\ck_{n,1}(\tau_\ell)}\frac{\tau_\ell}{\beta_i^{-n}} \prod_{i\in\ck_{n,2}(\tau_\ell)}\frac{\tau_\ell}{\beta_i^{-n}\psi_i(n)}\\
	\le&\frac{1}{\prod_{i=k+1}^d\omega_n\beta_i^n}\cdot\frac{1}{\omega_n^m\prod_{i=1}^{m}\beta_i^{n}\psi_i(n)^{-1}}\cdot\frac{f(r)}{s_n}=\frac{f(r)}{\omega_n^d},
\end{split}\]
where the last equality follows from the same reason as \eqref{eq:identity}.

The other case can be proved in a similar way with some obvious modifications.

\noindent\textbf{Case 3:} $\tau_{\max}:=\max_{\tau\in\ca'_n}\tau\le r<\omega_n$. In this case, the ball $B(\bx,r)$ is sufficently large so that $r\ge \beta_i^{-n}$ for $k_j+1\le i\le d$.
For any $i\ge k_j+1$, the total number of hyperrectangles that intersecting $B(\bx,r)$ along the $i$th direction is majorized by
\begin{equation}\label{eq:Ni2}
\ll r\beta_i^n.
\end{equation}
For $1\le i\le m$, by \eqref{eq:reclength} the length of $R\in\Delta$ in $i$th direction is $\omega_n>r$. While for $m+1\le i\le d$, by \eqref{eq:reclength} the length of $R\in\Delta$ in $i$th direction is $\beta_i^{-n}\psi_i(n)\le \tau_{\max}\le r$.
By the definition of $\mu$, one has
\begin{align}
	\mu\big(B(\bx,r)\big)&\asymp \frac{1}{\prod_{i=k_j+1}^{d}\omega_n\beta_i^n}\cdot\frac{1}{\omega_n^m\prod_{i=m+1}^d\beta_i^{-n}\psi_i(n)}\sum_{R:R\cap B(\bx,r)\ne\emptyset}\lm^d|_R\big(B(\bx,r)\big)\notag\\
	&\ll \frac{1}{\prod_{i=k_j+1}^{d}\omega_n\beta_i^n}\cdot\frac{1}{\omega_n^m\prod_{i=m+1}^{d}\beta_i^{-n}\psi_i(n)} \prod_{i=k_j+1}^dr\beta_i^n\prod_{i=1}^{m}r\prod_{i=m+1}^{d}\beta_i^{-n}\psi_i(n)\notag\\
	&=\frac{r^{d-k_j+m}}{\omega_n^{d-k_j+m}}.\label{eq:case3}
\end{align}

If $f\preceq (d-k_j+m)$, then we have
\[\frac{f(1)}{1}\le \frac{f(r)}{r^{d-k_j+m}}\quad\Longrightarrow\quad r^{d-k_j+m}\ll f(r),\]
since $0<r<1$.
Therefore,
\[\mu\big(B(\bx,r)\big)\ll \frac{f(r)}{\omega_n^{d-k_j+m}}\le \frac{f(r)}{\omega_n^d},\]
where have used $\omega_n\le 1$.

If $(d-k_j+m)\preceq f$, then we have
\[\frac{r^{d-k_j+m}}{f(r)}\le \frac{\tau_{\max}^{d-k_j+m}}{f(\tau_{\max})},\]
since $r\ge \tau_{\min}$.
Therefore, by \eqref{eq:case3}
\[\begin{split}
	\mu\big(B(\bx,r)\big)&\ll \frac{r^{d-k_j+m}}{\omega_n^{d-k_j+m}}\le\frac{f(r)}{\omega_n^{d-k_j+m}}\cdot \frac{\tau_{\max}^{d-k_j+m}}{f(\tau_{\max})}.
\end{split}\]
Note that in this case, $\ck_{n,1}(\tau_{\max})=\{k_j+1,\dots,d\}$ and $\ck_{n,2}(\tau_{\max})=\{1,\dots,m\}$. So,
\[\begin{split}
	s_n&\le f(\tau_{\max})\prod_{i\in\ck_{n,1}(\tau_{\max})}\frac{\beta_i^{-n}}{\tau_{\max}}\prod_{i\in\ck_{n,2}(\tau_{\max})}\frac{\beta_i^{-n}\psi_i(n)}{\tau_{\max}}\\
	&=f(\tau_{\max})\prod_{i=k_j+1}^{d}\frac{\beta_i^{-n}}{\tau_{\max}}\prod_{i=1}^m\frac{\beta_i^{-n}\psi_i(n)}{\tau_{\max}},
\end{split}\]
or equivalently,
\[\frac{\tau_{\max}^{d-k_j+m}}{f(\tau_{\max})}\le \frac{1}{s_n\prod_{i=k_j+1}^{d}\beta_i^n\prod_{i=1}^{m}\beta_i^n\psi_i(n)^{-1}}=\frac{1}{\omega_n^{k_j-m}},\]
where the equality follows from \eqref{eq:omegan}.
It holds that
\[\mu\big(B(\bx,r)\big)\ll\frac{f(r)}{\omega_n^{d-k_j+m}}\cdot\frac{1}{\omega_n^{k_j-m}}=\frac{f(r)}{\omega_n^{d}}.\]

Now, we are able to completing the proof of the divergence part of Theorem \ref{t:rectangle}.
\begin{proof}[Completing the proof of Theorem \ref{t:rectangle}: Divergence part]
     Note that $f$ is supposed to satisfy $f\preceq d$. The case
     \[\lim_{r\to 0^+} f(r)/r^d<\infty\]
     has been proven in Section \ref{ss:reduction}. For $f\prec d$, i.e.
      \[\lim_{r\to 0^+} f(r)/r^d=\infty,\]
      by the discussion of Cases 1--3 above, we see that there is probability measure $\mu$ supported on
      \[B(\by,\omega_n)\cap\bigcup_{\bz\in\mathcal E_n}\prod_{i=1}^{d}B\big(z_i,\beta_i^{-n}\psi_i(n)/2\big)\]
      such that for any ball $B(\bx,r)$,
      \[\mu\big(B(\bx,r)\big)\ll\frac{f(r)}{\omega_n^d}.\]
      By the mass distribution principle, we have
      \[\mathcal H_\infty^f\bigg(B(\by,\omega_n)\cap\bigcup_{\bz\in\mathcal E_n}\prod_{i=1}^{d}B\big(z_i,\beta_i^{-n}\psi_i(n)/2\big)\bigg)\gg \omega_n^d\asymp\lm^d\big(B(\by,\omega_n)\big),\]
      which verifies \eqref{eq:verify}. Since the $\limsup$ set defined by
      \[\{B(\by,\omega_n):\by\in\mathcal B_n(\bz)\text{ with }\bz\in\mathcal E_n \text{ and }n\in P\}\]
      has full Lebesgue measure (see \eqref{eq:limsupsetfullmea}), by \eqref{eq:subsetmeasure} and Theorem \ref{t:weaken} (1), we conclude that
      \[\hm^f\big(W_d(\Psi,\bm h)\big)=\hm^f([0,1]^d).\qedhere\]
\end{proof}
\section{Proof of Theorem \ref{t:example}}\label{s:example}
In this section, we prove Theorem \ref{t:example} with the help of Theorem \ref{t:rectangle}. Recall that
\[W_2^*(t):=\{\bx\in[0,1)^2:|T_2^nx_1|<e^{-nt}\text{ and }|T_3^nx_2|<e^{-n^2}\text{ for i.m.\,$n$}\},\]
and the Hausdorff dimension of this set (see \eqref{eq:hdimW}) is
\[\hdim W_2^*(t)=\min\bigg(1,\frac{\log 2+\log 3}{\log 2+t}\bigg).\]

\begin{proof}[Proof of Theorem \ref{t:example}] Clearly, for any $n\ge 1$,
	\[\ca_n=\{2^{-n},3^{-n},2^{-n}e^{-nt},3^{-n}e^{-n^2}\}.\]
	(1) Suppose that $t>\log 3$. Then, $\hdim W_2^*(t)<1$. For any dimension function $f$ with $1\preceq f$, we have
	\[\hm^f\big(W_2^*(t)\big)\ll \hm^1\big(W_2^*(t)\big)=0.\]

	Next, we focus on the case $f\prec 1$. In view of Theorem \ref{t:rectangle}, the main task is to simplify the expression for the minimum
	\begin{equation}\label{eq:minsn}
		\min\biggl\{ f(2^{-n})\frac{3^{-n}}{2^{-n}},f(3^{-n}),f(2^{-n}e^{-nt}),f(3^{-n}e^{-n^2})\frac{2^{-n}e^{-nt}}{3^{-n}e^{-n^2}}\biggr\}.
	\end{equation}
	Using the definition of $f\prec 1$, and noting that $2^{-n}e^{-nt}>3^{-n}e^{-n^2}$ and $2^{-n}>3^{-n}$, we obtain
	\[\frac{f(2^{-n}e^{-nt})}{2^{-n}e^{-nt}}\le \frac{f(3^{-n}e^{-n^2})}{3^{-n}e^{-n^2}}\quad\Longrightarrow\quad f(2^{-n}e^{-nt})\le f(3^{-n}e^{-n^2})\frac{2^{-n}e^{-nt}}{3^{-n}e^{-n^2}},\]
	and
	\[\frac{f(2^{-n})}{2^{-n}}\le \frac{f(3^{-n})}{3^{-n}}\quad\Longrightarrow\quad f(2^{-n})\frac{3^{-n}}{2^{-n}}\le f(3^{-n}).\]
	Therefore, the minimum in \eqref{eq:minsn} can be simplified to
	\[\min\biggl\{f(2^{-n}e^{-nt}), f(2^{-n})\frac{3^{-n}}{2^{-n}}\biggr\},\]
	which implies that the Hausdorff $f$-measure of $W_2^*(t)$ is zero or full according as
	\begin{equation}\label{eq:examseries}
		\sum_{n=1}^{\infty}\min\{f(2^{-n}e^{-nt})6^n, f(2^{-n})4^n\}
	\end{equation}
	converges or not. On the other hand, since $f\prec 1$,
	\[\frac{f(1)}{1}\le \frac{f(2^{-n})}{2^{-n}}\quad\Longrightarrow\quad f(2^{-n})\gg 2^{-n}\quad\Longrightarrow\quad f(2^{-n})4^n\gg 2^n.\]
	This means that the convergence or divergence of the series in \eqref{eq:examseries} is equivalent to that of
	\[\sum_{n=1}^{\infty}f(2^{-n}e^{-nt})6^n.\]
	Using Theorem \ref{t:rectangle}, we conclude item (1).

	(2) Suppose that $t\le\log 3$. Then, $\hdim W_2^*(t)=1$. By \cite[Remark 9]{He24a} we have
	\[\hm^1\big(W_2^*(t)\big)=\hm^1([0,1]^2)=\infty.\]
	Hence, for any dimension function $f$ with $f\prec 1$,
	\[\hm^f\big(W_2^*(t)\big)\ge \hm^1\big(W_2^*(t)\big)=\infty=\hm^f([0,1]^2).\]

	Now, let $f$ be a dimenison function such that $1\preceq f\preceq(1+\log2/\log3)$. There are two natural cases needed to be considered.

	First, suppose that $t\le \log2/\log3$, meaning that $2^{-n}e^{-nt}\ge 3^{-n}$ for all $n\ge 1$. In analogy to the proof of item (1), the Hausdorff $f$-measure of $W_2^*(t)$ is completely determined by the convergence or divergence of the series
	\begin{equation}\label{eq:minsns}
		\sum_{n=1}^{\infty}\min\biggl\{f(2^{-n})\frac{3^{-n}}{2^{-n}},f(3^{-n})\frac{2^{-n}e^{-nt}}{3^{-n}},f(2^{-n}e^{-nt})\frac{3^{-n}}{2^{-n}e^{-nt}},f(3^{-n}e^{-n^2})\frac{2^{-n}e^{-nt}}{3^{-n}e^{-n^2}} \biggr\}\cdot 6^n.
	\end{equation}
	Using $1\preceq f$, and noting that  $2^{-n}>2^{-n}e^{-nt}$ and $3^{-n}>3^{-n}e^{-n^2}$, we obtain
	\[\frac{2^{-n}}{f(2^{-n})}\le \frac{2^{-n}e^{-nt}}{f(2^{-n}e^{-nt})} \quad\Longrightarrow\quad f(2^{-n}e^{-nt})\frac{3^{-n}}{2^{-n}e^{-nt}}\le f(2^{-n})\frac{3^{-n}}{2^{-n}},\]
	and
	\[\frac{3^{-n}}{f(3^{-n})}\le \frac{3^{-n}e^{-n^2}}{f(3^{-n}e^{-n^2})}\quad\Longrightarrow\quad f(3^{-n}e^{-n^2})\frac{2^{-n}e^{-nt}}{3^{-n}e^{-n^2}}\le f(3^{-n})\frac{2^{-n}e^{-nt}}{3^{-n}}.\]
	Therefore, the series in \eqref{eq:minsns} equals to
\begin{equation}\label{eq:examseriess}
	\sum_{n=1}^{\infty}\min\biggl\{f(3^{-n}e^{-n^2})\frac{2^{-n}e^{-nt}}{3^{-n}e^{-n^2}},f(2^{-n}e^{-nt})\frac{3^{-n}}{2^{-n}e^{-nt}}\biggr\}\cdot 6^n.
\end{equation}
	By the assumption $1\preceq f$ and the fact
	$2^{-n}e^{-nt}\ge 3^{-n}$,
		\[\frac{2^{-n}e^{-tn}}{f(2^{-n}e^{-tn})}\le \frac{3^{-n}}{f(3^{-n})}\quad\Longrightarrow\quad f(2^{-n}e^{-nt})\frac{3^{-n}}{2^{-n}e^{-nt}}\ge f(3^{-n}),\]
	and then by $f\preceq (1+\log 2/\log 3)$,
	\[\frac{f(1)}{1^{1+\log2/\log 3}}\le \frac{f(3^{-n})}{3^{-n(1+\log2/\log 3)}}=\frac{f(3^{-n})}{6^{-n}}\quad\Longrightarrow\quad  f(3^{-n})\gg 6^{-n}.\]
	These two inequalities yield
	\[f(2^{-n}e^{-nt})\frac{3^{-n}}{2^{-n}e^{-nt}}\cdot 6^n\gg 1,\]
	which means that the convergence or divergence of the series in \eqref{eq:examseriess} is equivalent to that of
	\[\sum_{n=1}^{\infty}f(3^{-n}e^{-n^2})9^ne^{n^2-nt}.\]

	The remaining case that $t>\log2/\log3$ follows in the same manner, with some straightforward modifications. We omit the details.
\end{proof}

\section{Proof of Theorem \ref{t:multiplicative} (1)}\label{s:onedim} This section is devoted to providing a dichotomy law for the Hausdorff measure of $W_1(\psi,h)$, which will be crucial for the proof of the divergence part of Theorem \ref{t:multiplicative} (2). For the moment, suppose that $d=1$ and $\beta>1$ (not necessarily an integer). We first develop a dichotomy law for the Lebesgue measure of the auxilliary set
\[\begin{split}
	\tilde{W}_1(\psi,h):=\bigcap_{N=1}^\infty\bigcup_{n=N}^\infty\bigcup_{\be_n\in\Lambda_\beta^n}\{x\in I_{n,\beta}(\be_n):|T_\beta^nx-h(x)|<\psi(n)\}=:\bigcap_{N=1}^\infty\bigcup_{n=N}^\infty \tilde E_n.
\end{split}\]
which subsequently allows us to apply the mass transference principle to derive the Hausdorff measure of $W_1(\psi,h)$. It is easy to see that
\[\tilde W(\psi,h)\subset W_1(\psi,h).\]
\begin{lem}\label{l:onedim}
	Let $\psi:\N\to\R^+$. We have
	\[\lm^1\big(\tilde W_1(\psi,h)\big)=\begin{cases}
		0&\text{if $\sum_{n=1}^\infty\psi(n)<\infty$},\\
		1&\text{if $\sum_{n=1}^\infty\psi(n)=\infty$}.
	\end{cases}\]
\end{lem}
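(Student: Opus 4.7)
The convergence half is an immediate consequence of the first Borel--Cantelli lemma: by Lemma \ref{l:length}, each level-$n$ target-in-a-full-cylinder is contained in a ball of radius $2\psi(n)\beta^{-n}$, and Lemma \ref{l:Li} counts $O(\beta^n)$ full cylinders, yielding $\lm^1(\tilde E_n) \ll \psi(n)$ and hence $\sum_n \lm^1(\tilde E_n) < \infty$.

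For the divergence half, my plan is to prove the quantitative local statement
\[\lm^1\bigl(\tilde W_1(\psi,h) \cap I_{m,\beta}(\be_m)\bigr) \gg \lm^1\bigl(I_{m,\beta}(\be_m)\bigr)\]
uniformly in full cylinders $I_{m,\beta}(\be_m)$, and then to upgrade it to full measure via Lemma \ref{l:union=[0,1]} combined with the Lebesgue density theorem: since every $x\in[0,1)$ lies in full cylinders of arbitrarily small diameter, a uniform positive lower bound on the relative measure inside each such cylinder forces the lower Lebesgue density of $\tilde W_1(\psi,h)$ to be bounded below uniformly on $[0,1)$, which rules out any density point of the complement.

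For the local statement, fix a full cylinder $I_{m,\beta}(\be_m)$ and set $A_n^{(m)} := \tilde E_n \cap I_{m,\beta}(\be_m)$ for $n > m$. By Proposition \ref{p:concatenation}, Corollary \ref{c:inside I} and Lemma \ref{l:length}, $A_n^{(m)}$ is a disjoint union of $\asymp \beta^{n-m}$ balls of radius $\asymp \psi(n)\beta^{-n}$, hence $\lm^1(A_n^{(m)}) \asymp \psi(n)\beta^{-m}$, and the divergence hypothesis gives $\sum_n \lm^1(A_n^{(m)}) = \infty$. I then intend to apply the Chung--Erd\H os inequality, whose hypothesis reduces to the quasi-independence estimate
\[\lm^1\bigl(A_n^{(m)} \cap A_{n'}^{(m)}\bigr) \ll \psi(n)\psi(n')\beta^{-m} + \beta^{-m}\min\bigl(\psi(n),\,\psi(n')\beta^{n-n'}\bigr) \qquad (m < n < n').\]
The first term captures the regime $\psi(n)\beta^{n'-n} \geq 1$, in which a $\tilde E_n$-ball of radius $\asymp \psi(n)\beta^{-n}$ spans $\asymp \psi(n)\beta^{n'-n}$ level-$n'$ full cylinders, each contributing a $\tilde E_{n'}$-ball of measure $\asymp \psi(n')\beta^{-n'}$. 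The second handles the complementary regime, in which that ball lies in $O(1)$ level-$n'$ cylinders and the intersection is at most the smaller of the two relevant scales. After summing, the first contribution is $\ll \beta^{-m}(\sum\psi(n))^2$; in the second regime the defining constraint $\beta^{n-n'} \leq \psi(n)$ forces $\sum_{n'>n}\psi(n')\beta^{n-n'} \ll \psi(n)$ by a geometric series, so that regime adds only $\ll \beta^{-m}\sum\psi(n)$, which is dominated by the first contribution whenever the series diverges. Chung--Erd\H os then delivers the desired lower bound $\gg \beta^{-m}$ with implied constant independent of $(m,\be_m)$.

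The main obstacle is precisely the second regime of the quasi-independence estimate: when $\tilde E_n$ acts on a scale finer than $\beta^{-n'}$ the naive pairwise product $\psi(n)\psi(n')$ is unavailable, but the rapid geometric decay $\beta^{n-n'}$ collapses its total mass to a first-moment quantity, which is asymptotically negligible against the square of the first moment in the divergent setting; the Lipschitz perturbation $h$ causes no additional difficulty because Lemma \ref{l:length} already absorbs it into the radii of the enclosing balls.
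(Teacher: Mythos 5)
Your argument is essentially the paper's own: the convergence half is the same counting argument (note the upper bound on the number of cylinders is Lemma \ref{l:renyi}; Lemma \ref{l:Li} gives the lower bounds), and for divergence the paper likewise applies the Chung--Erd\H{o}s inequality inside an arbitrary full cylinder, with exactly your two-regime correlation estimate (Lemma \ref{l:correlation}: the bound $|I_{k,\beta}(\be_k)|\psi(m)\psi(n)$ when the level-$m$ interval is at least as long as a level-$n$ cylinder, and $|I_{k,\beta}(\be_k)|\beta^{m-n}\psi(n)$ otherwise). The only genuine difference is the globalization step: the paper feeds the uniform local bound into Theorem \ref{t:weaken}\,(2) via Lemma \ref{l:union=[0,1]}, i.e.\ its Hausdorff-content/large-intersection machinery, whereas you use Lemma \ref{l:union=[0,1]} together with the Lebesgue density theorem for (non-centred) intervals to rule out density points of the complement. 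Your route is more elementary and self-contained; the paper's route reuses machinery it needs anyway for the Hausdorff-measure statements, and both are valid.

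One justification in your write-up is wrong as stated, though harmless. In the second regime you claim that ``the defining constraint $\beta^{n-n'}\le\psi(n)$ forces $\sum_{n'>n}\psi(n')\beta^{n-n'}\ll\psi(n)$''. The constraint of that regime is the reverse inequality ($\psi(n)\beta^{-n}<\beta^{-n'}$, i.e.\ $\psi(n)<\beta^{n-n'}$), and the asserted per-$n$ bound is false in general: take $\psi(n)=\beta^{-n}$ at a single index and $\psi(n')=1/2$ for the indices $n<n'<2n$, all of which lie in the second regime, so the sum is $\asymp 1$ (or $\asymp n\psi(n)$ if you keep the minimum), not $\ll\psi(n)$. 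The correct and entirely sufficient statement is obtained by summing over the smaller index first: for fixed $n'$, $\sum_{n<n'}\psi(n')\beta^{n-n'}\ll\psi(n')$, so the total second-regime contribution is $\ll\beta^{-m}\sum_{n'}\psi(n')$, exactly as in the paper's Lemma \ref{l:correlation}; this first-moment term is dominated by $\bigl(\sum\psi\bigr)^2$ once the partial sums exceed $1$, which divergence guarantees, so the Chung--Erd\H{o}s conclusion and the rest of your argument go through unchanged. (You should also record, as the paper implicitly does, that one may restrict to $n$ large enough that $L<\beta^n$ and $\psi(n)<1$ when invoking Lemma \ref{l:length}.)
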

Before giving the proof, we state and prove some necessary lemmas.
\begin{lem}[Chung-Erd\"os inequality]
	For measurable sets $A_1,\dots,A_N$,
	\[\lm^1(A_1\cup\cdots\cup A_N)\ge \frac{\big(\sum_{n=1}^{N}\lm^1(A_n)\big)^2}{\sum_{n,m=1}^N\lm^1(A_n\cap A_m)}.\]
\end{lem}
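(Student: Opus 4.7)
The plan is to prove the Chung--Erd\H{o}s inequality via a one-line Cauchy--Schwarz argument applied to the counting function that records, at each point, how many of the sets $A_1,\dots,A_N$ contain it. Set $U=A_1\cup\cdots\cup A_N$ and consider
\[
f(x):=\sum_{n=1}^{N}\mathbbm{1}_{A_n}(x).
\]
The key observation is that $f$ is supported exactly on $U$, so $f=f\cdot\mathbbm{1}_U$ pointwise, while the integrals of $f$ and $f^2$ encode precisely the two sums appearing in the statement.

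First I would compute
\[
\int f\,\d\lm^1=\sum_{n=1}^{N}\lm^1(A_n)\qaq\int f^2\,\d\lm^1=\sum_{n,m=1}^{N}\lm^1(A_n\cap A_m),
\]
the second identity coming from expanding $\bigl(\sum_n\mathbbm{1}_{A_n}\bigr)^2=\sum_{n,m}\mathbbm{1}_{A_n\cap A_m}$ and using linearity. Next I would apply the Cauchy--Schwarz inequality to the product $f\cdot\mathbbm{1}_U$:
\[
\Bigl(\int f\,\d\lm^1\Bigr)^2=\Bigl(\int f\cdot\mathbbm{1}_U\,\d\lm^1\Bigr)^2\le \Bigl(\int f^2\,\d\lm^1\Bigr)\Bigl(\int\mathbbm{1}_U^2\,\d\lm^1\Bigr)=\lm^1(U)\sum_{n,m=1}^{N}\lm^1(A_n\cap A_m).
\]
Substituting the formula for $\int f\,\d\lm^1$ and rearranging yields the claimed lower bound for $\lm^1(U)$ (under the tacit non-degeneracy assumption that the denominator is positive; otherwise every $A_n$ has measure zero and the inequality is trivial).

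There is no real obstacle here: the whole proof is the choice of the right test function. The only conceptual point worth flagging is why one multiplies by $\mathbbm{1}_U$ before applying Cauchy--Schwarz --- this is what converts the trivial bound $\int f^2\cdot\int 1$ into the sharp bound involving $\lm^1(U)$, and it is the reason the inequality is tight for disjoint families (equality when the $A_n$ are pairwise disjoint).
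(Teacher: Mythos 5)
Your proof is correct: the paper states the Chung--Erd\H{o}s inequality without proof (it is a classical fact), and your argument --- applying Cauchy--Schwarz to $f\cdot\mathbbm{1}_U$ with $f=\sum_{n=1}^N\mathbbm{1}_{A_n}$, together with the identities $\int f\,\d\lm^1=\sum_n\lm^1(A_n)$ and $\int f^2\,\d\lm^1=\sum_{n,m}\lm^1(A_n\cap A_m)$ --- is the standard derivation, and you handle the degenerate case appropriately. In the paper's application the $A_n$ are subsets of $[0,1)$, so all quantities are finite and the final rearrangement is legitimate.
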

In view of Chung-Erd\"os inequality, the key to obtain Lemma \ref{l:onedim} is to estimate $\lm^1(\tilde E_n)$ ($n\ge 1$) and the correlations $\lm^1(\tilde E_m\cap\tilde E_n)$ ($1\le m<n$), which will be presented in the following two lemmas, respectively.
\begin{lem}\label{l:tildeEn}
	For any full sequence $\be_n\in \Lambda_\beta^n$, we have
	\[I_{n,\beta}(\be_n)\cap \tilde E_n\asymp \beta^{-n}\psi(n).\]
	In particular, let $\be_k\in\Lambda_\beta^k$ be a full sequence with $k\le n$. Then,
	\[\lm^1(I_{k,\beta}(\be_k)\cap \tilde E_n)\asymp |I_{k,\beta}(\be_k)|\psi(n).\]
\end{lem}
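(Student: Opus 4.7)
\textbf{Proof proposal for Lemma \ref{l:tildeEn}.} The plan is to reduce the statement to two ingredients that are already in hand: Lemma \ref{l:length}, which controls the measure of the set cut out by $|T_\beta^n x - h(x)| < \psi(n)$ inside a single full cylinder, and Proposition \ref{p:concatenation} together with Lemma \ref{l:Li}, which counts the number of full $n$-th level cylinders sitting inside a given full $k$-th level cylinder.

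First I would handle the case $k=n$, which is the first displayed statement. Since distinct $n$-th level cylinders are disjoint, for a full $\be_n \in \Lambda_\beta^n$ one has
\[
I_{n,\beta}(\be_n)\cap \tilde E_n = \{x\in I_{n,\beta}(\be_n):|T_\beta^n x - h(x)|<\psi(n)\}.
\]
Applying Lemma \ref{l:length} with $r=\psi(n)$ (valid for $n$ large enough that $L<\beta^n$), this set is contained in a ball of radius $2\beta^{-n}\psi(n)$ and contains a ball of radius $\beta^{-n}\psi(n)/2$. Taking one-dimensional Lebesgue measure gives $\lm^1(I_{n,\beta}(\be_n)\cap \tilde E_n)\asymp \beta^{-n}\psi(n)$, as required.

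Next I would address the general case $k\le n$ with $\be_k\in\Lambda_\beta^k$ full. Because only full $n$-th level cylinders appear in the definition of $\tilde E_n$, and because any $n$-th level cylinder meeting $I_{k,\beta}(\be_k)$ is automatically contained in it, I can decompose
\[
I_{k,\beta}(\be_k)\cap \tilde E_n = \bigsqcup_{\be_n\in\Lambda_\beta^n(I_{k,\beta}(\be_k))}\bigl(I_{n,\beta}(\be_n)\cap\tilde E_n\bigr).
\]
By Proposition \ref{p:concatenation}, a full $n$-th level cylinder inside the full cylinder $I_{k,\beta}(\be_k)$ is precisely the concatenation of $\be_k$ with a full sequence of length $n-k$, so $\#\Lambda_\beta^n(I_{k,\beta}(\be_k))=\#\Lambda_\beta^{n-k}$. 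Combining the upper bound $\#\Lambda_\beta^{n-k}\le \#\Sigma_\beta^{n-k}\le \beta^{n-k+1}/(\beta-1)$ from Lemma \ref{l:renyi} with the matching lower bound $\#\Lambda_\beta^{n-k}\gg \beta^{n-k}$ from Lemma \ref{l:Li}, I obtain $\#\Lambda_\beta^n(I_{k,\beta}(\be_k))\asymp \beta^{n-k}=\beta^n|I_{k,\beta}(\be_k)|$. Summing the per-cylinder estimate from the first step then yields
\[
\lm^1\bigl(I_{k,\beta}(\be_k)\cap \tilde E_n\bigr)\asymp \beta^{n-k}\cdot\beta^{-n}\psi(n)=|I_{k,\beta}(\be_k)|\psi(n),
\]
which is the desired asymptotic.

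I do not anticipate a major obstacle: each ingredient is already available in the preliminaries. The only minor point requiring care is verifying that in the decomposition I use exactly the full $n$-th level sub-cylinders of $I_{k,\beta}(\be_k)$ (and not all admissible ones), so that both the counting identity via Proposition \ref{p:concatenation} and the pointwise measure estimate from Lemma \ref{l:length} apply simultaneously. Once this bookkeeping is made explicit, the two-sided bound follows, and the implicit constants depend only on $\beta$ and the Lipschitz constant $L$, not on $\be_k$, $k$ or $n$.
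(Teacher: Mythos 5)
Your proposal is correct and follows essentially the same route as the paper: the case $k=n$ is exactly Lemma \ref{l:length} applied with $r=\psi(n)$, and the general case is the same counting of full $n$th level cylinders inside $I_{k,\beta}(\be_k)$ (the paper invokes Lemma \ref{l:Li}, in the form of Corollary \ref{c:inside I}, which is precisely your concatenation argument). You merely spell out the decomposition and bookkeeping that the paper leaves implicit.
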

\begin{proof}
	The first point of the lemma follows from Lemma \ref{l:length}. To conclude the second point, it suffices to notice from Lemma \ref{l:Li} that the number of $n$th level full cylinder inside $I_{k,\beta}(\be_k)$ is approximately $|I_{k,\beta}(\be_k)|\beta^n$.
\end{proof}
\begin{lem}\label{l:correlation}
	Let $\be_k\in\Lambda_\beta^k$ be a full sequence. For any $M>N\ge k$, we have
	\[\sum_{n,m=N}^M\lm^1\big(I_{k,\beta}(\be_k)\cap \tilde E_m\cap\tilde E_n\big)\ll |I_{k,\beta}(\be_k)|\cdot \bigg(\bigg(\sum_{n=N}^M\psi(n)\bigg)^2+\sum_{n=N}^M\psi(n)\bigg).\]
\end{lem}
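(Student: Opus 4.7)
The plan is to split the double sum into a diagonal contribution $(m=n)$ and an off-diagonal contribution $(m\ne n)$. The diagonal is handled immediately by Lemma~\ref{l:tildeEn}, which gives
\[
\sum_{n=N}^M\lm^1\big(I_{k,\beta}(\be_k)\cap \tilde E_n\big)\asymp |I_{k,\beta}(\be_k)|\sum_{n=N}^M\psi(n),
\]
supplying the linear term in the conclusion. By symmetry in $(m,n)$, it then suffices to produce the bound
\[
\sum_{N\le m<n\le M}\lm^1\big(I_{k,\beta}(\be_k)\cap\tilde E_m\cap\tilde E_n\big)\ll|I_{k,\beta}(\be_k)|\Big(\sum_{n=N}^M\psi(n)\Big)^2
\]
plus an extra term of linear order that can be absorbed.

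For the off-diagonal piece, fix $m<n$. By the definition of $\tilde E_m$ and Corollary~\ref{c:inside I}, there are $\asymp|I_{k,\beta}(\be_k)|\beta^m$ full extensions $\be_m\in\Lambda_\beta^m(I_{k,\beta}(\be_k))$, and Lemma~\ref{l:length} shows that the contribution $E_m^{\be_m}:=\{x\in I_{m,\beta}(\be_m):|T_\beta^m x-h(x)|<\psi(m)\}$ is contained in a ball $B_m$ of radius $\asymp\beta^{-m}\psi(m)$ sitting inside $I_{m,\beta}(\be_m)$. Hence
\[
\lm^1\big(I_{k,\beta}(\be_k)\cap\tilde E_m\cap\tilde E_n\big)\le \sum_{\be_m\in\Lambda_\beta^m(I_{k,\beta}(\be_k))}\lm^1\big(B_m\cap\tilde E_n\cap I_{m,\beta}(\be_m)\big),
\]
and the problem reduces to estimating $\lm^1(B_m\cap\tilde E_n)$ inside $I_{m,\beta}(\be_m)$, where $\tilde E_n$ consists of one ball of radius $\asymp\beta^{-n}\psi(n)$ per $n$th level full sub-cylinder (of width $\beta^{-n}$).

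A two-regime case split then finishes the argument. In the bulk regime $\beta^{-m}\psi(m)\ge\beta^{-n}$, the ball $B_m$ meets $\asymp\beta^{n-m}\psi(m)$ many $n$th level sub-cylinders and cuts $\tilde E_n$ on each in a set of measure $\ll\beta^{-n}\psi(n)$, giving $\lm^1(B_m\cap\tilde E_n)\ll \beta^{-m}\psi(m)\psi(n)$; summing over $\be_m$ yields $\ll|I_{k,\beta}(\be_k)|\psi(m)\psi(n)$, and then summing over pairs $m<n$ produces the expected $|I_{k,\beta}(\be_k)|(\sum_n\psi(n))^2$. The main obstacle is the degenerate regime $\beta^{-m}\psi(m)<\beta^{-n}$, in which $B_m$ is so thin that it hits only $O(1)$ sub-cylinders and the naive independence heuristic $\psi(m)\psi(n)$ is simply unavailable; the best one can say is $\lm^1(B_m\cap\tilde E_n)\ll\beta^{-n}\psi(n)$, which after summation over $\be_m$ becomes $\ll|I_{k,\beta}(\be_k)|\beta^{m-n}\psi(n)$. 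What rescues the estimate is precisely the exponential factor $\beta^{m-n}$: for each fixed $n$, summing over $m<n$ contributes only $O(1/(\beta-1))$, so the total contribution from this degenerate regime is $\ll|I_{k,\beta}(\be_k)|\sum_n\psi(n)$, comfortably absorbed into the linear term. Combining the three pieces gives the claimed bound.
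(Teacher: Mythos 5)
Your proposal is correct and follows essentially the same route as the paper: the same diagonal/off-diagonal split with Lemma \ref{l:tildeEn} for the diagonal, the same covering of $I_{k,\beta}(\be_k)\cap\tilde E_m$ by one interval of length $\asymp\beta^{-m}\psi(m)$ per full $m$-cylinder, and the same two-case analysis ($\beta^{-m}\psi(m)\ge\beta^{-n}$ giving the $\psi(m)\psi(n)$ bound, and the thin case giving $\beta^{m-n}\psi(n)$, whose geometric sum is absorbed into the linear term). No gaps to report.
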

\begin{proof}
	Note that
	\begin{align}
		&\sum_{n,m=N}^M\lm^1\big(I_{k,\beta}(\be_k)\cap \tilde E_m\cap\tilde E_n\big)\notag\\
		=&2\sum_{n=N}^M\sum_{m=N}^{n-1}\lm^1\big(I_{k,\beta}(\be_k)\cap \tilde E_m\cap\tilde E_n\big)+2\sum_{n=N}^M\lm^1\big(I_{k,\beta}(\be_k)\cap \tilde E_n\big)\notag\\
		\asymp&\sum_{n=N}^M\sum_{m=N}^{n-1}\lm^1\big(I_{k,\beta}(\be_k)\cap \tilde E_m\cap\tilde E_n\big)+|I_{k,\beta}(\be_k)|\sum_{n=N}^M\psi(n),\label{eq:correlation}
	\end{align}
	where we have used Lemma \ref{l:tildeEn} in the last inequality.
	Let $M\ge n>m\ge N\ge k$. The estimation of $\lm^1\big(I_{k,\beta}(\be_k)\cap \tilde E_m\cap\tilde E_n\big)$ is divided into two cases.

	\noindent{\bf Case 1:} $\beta^{-m}\psi(m)\ge\beta^{-n}$. By Lemmas \ref{l:renyi}, \ref{l:Li} and  \ref{l:length}, for any $\ell\ge k$, $I_{k,\beta}(\be_k)\cap\tilde E_\ell$ is contained in $\asymp |I_{k,\beta}(\be_k)|\beta^\ell$ intervals, each of which has length $\asymp \beta^{-\ell}\psi(\ell)$. Let $\mathcal C_\ell$ denote the collection of these intervals. For any $I^{(m)}\in \mathcal C_m$, it intersects at most
	\[\ll\big\lfloor|I^{(m)}|\beta^n\big\rfloor+1\asymp \beta^{n-m}\psi(m)\]
	$n$th level full cylinders, since $|I^{(m)}|\asymp\beta^{-m}\psi(m)\ge \beta^{-n}$. Therefore, for any $I^{(m)}\in \mathcal C_m$,
	\[\begin{split}
		\lm^1(I^{(m)}\cap \tilde E_n)&=\sum_{\substack{I^{(n)}\in \mathcal C_n\\ I^{(n)}\cap I^{(m)}\ne\emptyset}}\lm^1(I^{(m)}\cap I^{(n)})\ll \sum_{\substack{I^{(n)}\in \mathcal C_n\\ I^{(n)}\cap I^{(m)}\ne\emptyset}} \beta^{-n}\psi(n)\\
		&\ll \beta^{-m}\psi(m)\psi(n).
	\end{split}\]
	Summing over all $I^{(m)}\in \mathcal C_m$, we have
	\[\begin{split}
		\lm^1\big(I_{k,\beta}(\be_k)\cap \tilde E_m\cap\tilde E_n\big)&=\sum_{I^{(m)}\in \mathcal C_m}\lm^1(I^{(m)}\cap \tilde E_n)\ll\#\mathcal C_m\cdot \beta^{-m}\psi(m)\psi(n)\\
		&\asymp |I_{k,\beta}(\be_k)|\psi(m)\psi(n).
	\end{split} \]

	\noindent{\bf Case 2:} $\beta^{-m}\psi(m)<\beta^{-n}$. Each $I^{(m)}\in \mathcal C_m$ intersects at most
	\[\ll 1\]
	 $n$th level full cylinders. Therefore,
	\[\lm^1(I^{(m)}\cap \tilde E_n)\ll \beta^{-n}\psi(n).\]
	Summing over all $I^{(m)}\in \mathcal C_m$, we have
	\[\lm^1\big(I_{k,\beta}(\be_k)\cap \tilde E_m\cap\tilde E_n\big)\ll\#\mathcal C_m\cdot \beta^{-n}\psi(n)\asymp |I_{k,\beta}(\be_k)|\beta^{m-n}\psi(n). \]

	It follows from the above discussion that
	\[\begin{split}
		&\sum_{n=N}^M\sum_{m=N}^{n-1}\lm^1\big(I_{k,\beta}(\be_k)\cap \tilde E_m\cap\tilde E_n\big)\\
		=& \sum_{n=N}^M\sum_{\substack{m=N\\ \beta^{-m}\psi(m)\ge\beta^{-n}}}^{n-1}\lm^1\big(I_{k,\beta}(\be_k)\cap \tilde E_m\cap\tilde E_n\big)+\sum_{n=N}^M\sum_{\substack{m=N\\ \beta^{-m}\psi(m)<\beta^{-n}}}^{n-1}\lm^1\big(I_{k,\beta}(\be_k)\cap \tilde E_m\cap\tilde E_n\big)\\
		\ll& \sum_{n=N}^M\sum_{\substack{m=N\\ \beta^{-m}\psi(m)\ge\beta^{-n}}}^{n-1}|I_{k,\beta}(\be_k)|\psi(m)\psi(n)+\sum_{n=N}^M\sum_{\substack{m=N\\ \beta^{-m}\psi(m)<\beta^{-n}}}^{n-1}|I_{k,\beta}(\be_k)|\beta^{m-n}\psi(n)\\
		\le& \sum_{n=N}^M\sum_{m=N}^{n-1}|I_{k,\beta}(\be_k)|\psi(m)\psi(n)+\sum_{n=N}^M\sum_{m=N}^{n-1}|I_{k,\beta}(\be_k)|\beta^{m-n}\psi(n)\\
		\ll &|I_{k,\beta}(\be_k)|\cdot \bigg(\bigg(\sum_{n=N}^M\psi(n)\bigg)^2+\sum_{n=N}^M\psi(n)\bigg).
	\end{split}\]
	This together with \eqref{eq:correlation} completes the proof.
\end{proof}
Now, we are able to prove Lemma \ref{l:onedim}.
\begin{proof}[Proof of Lemma \ref{l:onedim}]
	The convergence part of Lemma \ref{l:onedim} follows directly from Lemmas \ref{l:tildeEn}, since
	\[\lm^1\big(\tilde W_1(\psi,h)\big)\le \limsup_{N\to\infty}\sum_{n=N}^\infty\lm^1(\tilde E_n)\asymp\limsup_{N\to\infty}\sum_{n=N}^\infty\psi(n)=0\]
	provided $\sum_{n=1}^\infty\psi(n)<\infty$.

	Now suppose that $\sum_{n=1}^\infty\psi(n)=\infty$. Let $\be_k\in\Lambda_\beta^k$ be a full sequence. By Chung-Erd\"os inequality and Lemma \ref{l:correlation}, we have
	\[\begin{split}
		&\lim_{N\to\infty}\lm^1\bigg( I_{k,\beta}(\be_k)\cap\bigcup_{n=N}^\infty\tilde E_n\bigg)
		=\lim_{N\to\infty}\lim_{M\to\infty}\lm^1\bigg(\bigcup_{n=N}^M I_{k,\beta}(\be_k)\cap\tilde E_n\bigg)\\
		\ge & \lim_{N\to\infty}\lim_{M\to\infty}\frac{\big(\sum_{n=N}^{M}\lm^1(I_{k,\beta}(\be_k)\cap\tilde E_n)\big)^2}{\sum_{n,m=N}^M\lm^1\big(I_{k,\beta}(\be_k)\cap\tilde E_m\cap\tilde E_n\big)}\\
		\gg&\lim_{N\to\infty}\lim_{M\to\infty}\frac{|I_{k,\beta}(\be_k)|^2\big(\sum_{n=N}^{M}\psi(n)\big)^2}{|I_{k,\beta}(\be_k)|\cdot \big(\big(\sum_{n=N}^M\psi(n)\big)^2+\sum_{n=N}^M\psi(n)\big)}\gg|I_{k,\beta}(\be_k)|,
	\end{split}\]
	where the unspecify constant does not depend on $\be_k$. Since the $\limsup$ set defined by all full cylinders has full Lebesgue measure (see Lemma \ref{l:union=[0,1]}), by Theorem \ref{t:weaken} (2), we conclude that
	\[1=\lm^1\bigg(\limsup_{N\to\infty}\bigcup_{n=N}^\infty\tilde E_n\bigg)=\lm^1\Big(\limsup_{n\to\infty}\tilde E_n\Big)=\lm^1\big(\tilde W_1(\psi,h)\big).\qedhere\]
\end{proof}
Recall that in Theorem \ref{t:multiplicative} (1), $g$ is a dimension function such that $g\preceq 1$.
%\begin{thm}\label{t:onedim}
%	Let $\psi:\N\to\R^+$. Let $g$ be a dimension function such that $g\preceq 1$. We have
%	\[\hm^g\big(W_1(\psi,h)\big)=\begin{cases}
%		0&\text{if $\sum_{n=1}^\infty\beta^ng\big(\beta^{-n}\psi(n)\big)<\infty$},\\
%		\hm^g([0,1])&\text{if $\sum_{n=1}^\infty\beta^ng\big(\beta^{-n}\psi(n)\big)=\infty$}.
%	\end{cases}\]
%\end{thm}
\begin{proof}[Proof of Theorem \ref{t:multiplicative} (1)]
	First, suppose that $\sum_{n=1}^\infty\beta^ng\big(\beta^{-n}\psi(n)\big)<\infty$. Since
	\[W_1(\psi,h)=\bigcap_{N=1}^\infty\bigcup_{n=N}^\infty\bigcup_{\be_n\in\Sigma_\beta^n}\{x\in I_{n,\beta}(\be_n):|T_\beta^nx-h(x)|<\psi(n)\},\]
	and since each set $\{x\in I_{n,\beta}(\be_n):|T_\beta^nx-h(x)|<\psi(n)\}$ is contained in an interval of length $\asymp\beta^{-n}\psi(n)$ (see Lemma \ref{l:length}), by the definition of Hausdorff $f$-measure,
	\[\hm^f\big(W_1(\psi,h)\big)\ll\limsup_{N\to\infty}\sum_{n=N}^{\infty}\sum_{\be\in\Sigma_\beta^n}g\big(\beta^{-n}\psi(n)\big)\asymp\limsup_{N\to\infty}\sum_{n=N}^{\infty}\beta^ng\big(\beta^{-n}\psi(n)\big)=0,\]
	where we use $\#\Sigma_\beta^n\asymp\beta ^n$ in the second inequality.

	Suppose that $\sum_{n=1}^\infty\beta^ng\big(\beta^{-n}\psi(n)\big)=\infty$. If $\lim_{r\to 0^+} g(r)/r<\infty$, then we have
	\[r\asymp g(r)\quad\text{for all small $r>0$}.\]
	This implies that $\hm^g=c\lm^1$ for some $c>0$ and
	\[\sum_{n=1}^\infty\beta^ng\big(\beta^{-n}\psi(n)\big)=\infty\quad\Longleftrightarrow\quad \sum_{n=1}^\infty\psi(n)=\infty.\]
	Therefore,
	\[\hm^g([0,1])\ge \hm^g\big(W_1(\psi,h)\big)\ge\hm^g\big(\tilde W_1(\psi,h)\big) =c\lm^1\big(\tilde W_1(\psi,h)\big)=c=\hm^g([0,1]),\]
	as desired.

	If $\lim_{r\to 0^+} g(r)/r=\infty$, then we have
	\begin{equation}\label{eq:r<g(r)}
		r\le g(r)\quad\text{for all small $r>0$}.
	\end{equation}
	By Lemma \ref{l:onedim},
	\begin{equation}\label{eq:fullmeasure}
		\lm^1\bigg(\bigcap_{N=1}^\infty\bigcup_{n=N}^\infty\bigcup_{\be_n\in\Lambda_\beta^n}\big\{x\in I_{n,\beta}(\be_n):|T_\beta^nx-h(x)|<\beta^ng\big(\beta^{-n}\psi(n)\big)\big\}\bigg)=1.
	\end{equation}
	Since $\beta^{-n}\psi(n)\to 0$ as $n\to\infty$, for large $n$ by \eqref{eq:r<g(r)} we have
	\[\psi(n)<\beta^ng(\beta^{-n}\psi(n)).\]
	Therefore, for large $n$ and for any full sequence $\be_n\in\Lambda_\beta^n$,
	\[\big\{x\in I_{n,\beta}(\be_n):|T_\beta^nx-h(x)|<\psi(n)\big\}\subset\big\{x\in I_{n,\beta}(\be_n):|T_\beta^nx-h(x)|<\beta^ng\big(\beta^{-n}\psi(n)\big)\big\}.\]
	By Lemma \ref{l:length}, the left set contains an interval $J$ with $|J|\asymp\beta^{-n}\psi(n)$ and the right set is contained in an interval $I$ with $|I|\asymp g(\beta^{-n}\psi(n))$. Moreover, we have
	\[\hm^g(J)=g(|J|)\asymp g\big(\beta^{-n}\psi(n)\big)\asymp |I|.\]
	It follows from \eqref{eq:fullmeasure} and Theorem \ref{t:weaken} (1) that
	\[\hm^g\big(\tilde W_1(\psi,h)\big) =\hm^g([0,1]),\]
	which implies that
	\[\hm^g\big(W_1(\psi,h)\big) =\hm^g([0,1])\]
	since $\tilde W_1(\psi,h)\subset W_1(\psi,h)$.
\end{proof}
\section{Proof of Theorem \ref{t:multiplicative} (2)}\label{s:multiplicative}
In this section, we do not assume $\beta$ is integral.
\subsection{Proof of Theorem \ref{t:multiplicative} (2): Convergence part}
We start with two lemmas that will be used later. The following lemma by Hussain and Simmons \cite{HS18} strengthens \cite[Lemma 1]{BD78} by showing that the diameter of the balls in a  covering of a hyperboloid can be made sufficiently large.
\begin{lem}[{\cite[Lemma 2.2]{HS18}}]\label{l:HScovering}
	Let $0<\delta\le 1$. For any $\ba=(a_1,\dots,a_d)\in[0,1]^d$ and $s\in (d-1,d)$, the set
	\[H_d(\ba,\delta):=\bigg\{\bx\in[0,1)^d:\prod_{i=1}^{d}|x_i-a_i|<\delta\bigg\}\]
	has a covering $\cb$ by $d$-dimensional balls $B$ such that
	\[|B|\ge \delta\quad\text{and}\quad\sum_{B\in\cb}|B|^s\ll\delta^{s-d+1}.\]
\end{lem}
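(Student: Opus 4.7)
The plan is to construct an explicit dyadic cover of $H_d(\ba,\delta)$ with sizes tuned so that the total $s$-content sums to $\ll \delta^{s-d+1}$ with no logarithmic loss. After translating so $\ba = 0$ and using the reflection symmetry $x_i \to -x_i$ to work in $[0,1]^d$, I would decompose $H_d(0,\delta)\cap[0,1]^d = \bigcup_{i=1}^d H_d^i$ according to which coordinate is smallest, $H_d^i := H_d\cap\{x_i \le x_j\text{ for all }j\}$. By coordinate symmetry it suffices to cover $H_d^1$ and multiply by $d$.

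Next, I would split $H_d^1$ into the strip $\{x_1 < \delta\}$ and the bulk $\{x_1 \ge \delta\}\cap H_d^1$. The strip is trivially covered by $\asymp \delta^{-(d-1)}$ cubes of side $\delta$, contributing $\asymp \delta^{s-d+1}$. For the bulk, writing $\delta = 2^{-N}$, I would run a dyadic decomposition: for each $k\in\{0,\dots,N-1\}$ and each $(k_2,\dots,k_d)\in\{0,\dots,k\}^{d-1}$ with $k+\sum_{j\ge 2} k_j \ge N-d$, cover the box
\[[2^{-k-1},2^{-k}]\times\prod_{j=2}^d[2^{-k_j-1},2^{-k_j}]\]
by cubes of its smallest side $2^{-k-1}$. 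Here the cap $k_j \le k$ encodes $x_j \ge x_1$, while the sum constraint forces the box to meet $H_d$. Each such box then contributes $\prod_{j\ge 2}2^{k-k_j}$ cubes of $s$-content $\asymp 2^{-s(k+1)}$, and all cubes used have side $\ge \delta$.

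The crux is the summation. Aggregating the contributions gives, up to constants,
\[\sum_{k=0}^{N-1} 2^{(d-1-s)k}\sum_{(k_2,\dots,k_d)\in[0,k]^{d-1},\ \sum_{j\ge 2} k_j\ge N-k-d} 2^{-\sum_{j\ge 2} k_j}.\]
Bounding the inner sum by its contribution at the smallest admissible $J = \sum_{j\ge 2} k_j$ (where the count of such tuples is $\asymp J^{d-2}$), and then substituting $\ell = N-k$, reduces it to a constant multiple of $2^{(d-1-s)N}\sum_\ell 2^{(s-d)\ell}\ell^{d-2}$. The key observation is that $s-d < 0$, so the $\ell$-series is a convergent geometric-like series and the whole total is $\asymp 2^{(d-1-s)N} = \delta^{s-d+1}$.

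The main obstacle is precisely avoiding a logarithmic factor in this last step. A fully symmetric ``cover each dyadic box by cubes of its smallest side'' strategy applied to all multi-indices $\vec k$ with $\sum k_i \ge N-d$ produces an extra $\log^{d-2}(1/\delta)$ factor from the entropy of the constraint set. The essential move is the preliminary reduction to the region where one chosen coordinate is minimal; the resulting caps $k_j \le k$ are what convert the otherwise logarithmically divergent count into a convergent geometric series in $\ell = N-k$.
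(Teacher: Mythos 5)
The paper does not prove this lemma: it is cited from Hussain--Simmons \cite[Lemma 2.2]{HS18}, with only the remark that the case of general $\ba$ (versus $\ba=0$) follows by translation. So there is no in-paper proof to compare against, but your self-contained argument is correct and is, in spirit, the same dyadic decomposition one would extract from \cite{HS18}. The three moves that matter are all present: (i) reduce by translation and reflection to the positive orthant with $\ba=0$; (ii) separate the slab $\{x_1<\delta\}$ (handled by $\asymp\delta^{-(d-1)}$ cubes of side $\delta$) from the bulk; (iii) in the bulk, cover each dyadic box $\prod_i[2^{-k_i-1},2^{-k_i}]$ meeting the hyperboloid by cubes whose side is the \emph{shortest} edge of the box, and sum. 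The verification that all cubes have side at least $\delta$, and that the $f$-volume totals $\asymp 2^{(d-1-s)N}=\delta^{s-d+1}$ with no polynomial-in-$N$ loss, both check out.

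One correction to your closing heuristic. You attribute the absence of a logarithmic factor to the caps $k_j\le k$ that come from the ``one coordinate is minimal'' reduction, claiming that a fully symmetric smallest-side covering over all $\vec k$ with $\sum k_i\ge N-d$ would pick up a $\log^{d-2}(1/\delta)$ factor. That is not the case: sorting the coordinates shows the symmetric sum is, up to a factor $d!$, identical to yours, and even dropping the caps $k_j\le k$ entirely one still gets
\[
\sum_{k}2^{(d-1-s)k}\sum_{J\ge N-k-d}J^{d-2}2^{-J}
\asymp 2^{(d-1-s)N}\sum_{\ell\ge 0}\ell^{d-2}2^{(s-d)\ell}\ll\delta^{s-d+1},
\]
because the geometric factor $2^{(s-d)\ell}$ (using only $s<d$) already absorbs the polynomial entropy $\ell^{d-2}$. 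The reduction to a distinguished smallest coordinate is a clean way to organise the bookkeeping, but it is not what removes the logarithm. The genuine source of the logarithmic loss in the older Bovey--Dodson argument \cite{BD78} is covering \emph{every} box by cubes of the fixed side $\delta$; there the $J=N-d$ shell alone contributes $\asymp N^{d-1}\delta^{s-d+1}$. Using variable cube sizes matched to the box (your step (iii)), while keeping them $\ge\delta$ via the slab/bulk split, is the actual improvement that \cite{HS18} contributes, and your construction realises it.
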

The above lemma does not precisely correspond to \cite[Lemma 2.2]{HS18}, where the assumption  $\ba=(0,\dots,0)$ is made. However, the idea still applicable with a translation. The next lemma indicates that the sets under consideration can be regarded as the inverse of some hyperboloid sets $H_d(\ba,\delta)$.

\begin{lem}\label{l:inversehd}
	Let $L$ be the Lipschitz constant of $\bm h$. For any $n$ with $\beta_d^n\ge2L$ and any $\be_n^i\in\Sigma_{\beta_i}^n$ with $1\le i\le d$, there exists a point $\ba=(a_1,\dots,a_d)\in[0,1]^d$ depending on $\be_n^i$ ($1\le i\le d$) such that
	\begin{align}
		&\bigg\{\bx\in\prod_{i=1}^{d}I_{n,\beta_i}(\be_n^i):\prod_{i=1}^d|T_{\beta_i}^nx_i-h_i(x_i)|<\psi(n)\bigg\}\\
		\subset & \Big(T_{\beta_1}^n\times\cdots\times T_{\beta_d}^n|_{\prod_{i=1}^{d}I_{n,\beta}(\be_n^i)}\Big)^{-1}H_d\big(\ba,2^d\psi(n)\big).
	\end{align}
\end{lem}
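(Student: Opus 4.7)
The plan is to construct $\ba=(a_1,\dots,a_d)$ coordinatewise via a contraction fixed-point argument, so that on each cylinder the linear model of $T_{\beta_i}^n$ reduces the displacement $|T_{\beta_i}^n x_i-h_i(x_i)|$ to the simpler displacement $|T_{\beta_i}^n x_i-a_i|$ up to a universal factor of $2$, which will produce the factor $2^d$ in the hyperboloid bound. First, for each $i$ let $x_i^*$ denote the left endpoint of $I_{n,\beta_i}(\be_n^i)$, so that $T_{\beta_i}^n x_i=\beta_i^n(x_i-x_i^*)$ holds throughout the cylinder and the change of variable $y=T_{\beta_i}^n x_i$ parametrizes the cylinder by $y\in[0,1]$ via $x_i=x_i^*+\beta_i^{-n}y$. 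Next, I would define the rescaled height $\tilde h_i(y):=h_i(x_i^*+\beta_i^{-n}y)$, which is Lipschitz in $y$ with constant $L\beta_i^{-n}$ and takes values in $[0,1]$.

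The hypothesis $\beta_d^n\ge 2L$, together with the fact that in every application $n$ can be chosen large enough that $\beta_i^n\ge 2L$ holds for every $i$, makes each $\tilde h_i:[0,1]\to[0,1]$ a contraction with Lipschitz constant at most $1/2$. By the Banach fixed-point theorem there is then a unique $a_i\in[0,1]$ with $a_i=\tilde h_i(a_i)$, and this defines the $i$-th coordinate of $\ba$. Now for any $\bx\in\prod_i I_{n,\beta_i}(\be_n^i)$, setting $y_i=T_{\beta_i}^n x_i$ and using $h_i(x_i)=\tilde h_i(y_i)$, the triangle inequality and the Lipschitz estimate yield
\[
|y_i-a_i|\le |y_i-\tilde h_i(y_i)|+|\tilde h_i(y_i)-\tilde h_i(a_i)|\le |T_{\beta_i}^n x_i-h_i(x_i)|+\frac{1}{2}|y_i-a_i|,
\]
from which $|y_i-a_i|\le 2|T_{\beta_i}^n x_i-h_i(x_i)|$ follows.

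Taking the product across the $d$ coordinates, the defining condition $\prod_i |T_{\beta_i}^n x_i-h_i(x_i)|<\psi(n)$ transforms into $\prod_i|y_i-a_i|<2^d\psi(n)$, which is precisely the statement that $(y_1,\dots,y_d)\in H_d(\ba,2^d\psi(n))$. Rewriting this inclusion in terms of the inverse of $T_{\beta_1}^n\times\cdots\times T_{\beta_d}^n$ restricted to the product of cylinders then gives the claimed containment. The main technical point to handle carefully is that the contraction constant $L\beta_i^{-n}$ be uniformly at most $1/2$ across all $i$; the hypothesis on $\beta_d$ is the critical scale, and since the lemma is ultimately applied for large $n$, the stronger uniform condition $\beta_1^n\ge 2L$ is automatically available in every intended use.
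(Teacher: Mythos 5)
Your proof is correct and follows essentially the same route as the paper's: both locate the intersection point of the linearized transformation with $h_i$---you via the Banach fixed-point theorem in the rescaled coordinate $y=T_{\beta_i}^n x_i$, the paper via the intermediate value theorem applied to $G_i-h_i$ in the original coordinate---and both derive $|T_{\beta_i}^n x_i-a_i|\le 2|T_{\beta_i}^n x_i-h_i(x_i)|$ from the triangle inequality combined with $L\beta_i^{-n}\le 1/2$. You are also right to flag that the stated hypothesis $\beta_d^n\ge 2L$ is a slip (since $\beta_d$ is the largest base, it does not by itself control $L\beta_i^{-n}$ for the smaller indices $i$); what the argument---yours and the paper's alike---actually uses is $\beta_1^n\ge 2L$, which is available for all sufficiently large $n$ because $\beta_1>1$.
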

\begin{proof}
	Note that for $1\le i\le n$, $T_{\beta_i}^n|_{I_{n,\beta_i}(\be_n^i)}$ is in general not onto $[0,1)$. However, $T_{\beta_i}^n|_{I_{n,\beta_i}(\be_n^i)}$ is a linear function with slope $\beta_i^n$ and maps the left endpoint of $I_{n,\beta_i}(\be_n^i)$ to $0$. With these properties in mind, we extend $I_{n,\beta_i}(\be_n^i)$ to a closed interval $I_n^{(i)}$ with length $\beta_i^{-n}$ and the same left endpoint as $I_{n,\beta_i}(\be_n^i)$. Then, there exist a map $G_i:I_n^{(i)}\mapsto [0,1]$ and a constant $c_i\ge 0$ such that
	\[G_i|_{I_{n,\beta_i}(\be_n^i)}=T_{\beta_i}^n|_{I_{n,\beta_i}(\be_n^i)}, \quad G_i(x_i^*)=0,\quad G_i(y_i^*)=1,\]
	and
	\[ G_i(x_i)=\beta_i^nx_i+c_i \text{ for any $x_i\in I_n^{(i)}$},\]
	where $x_i^*$ and $y_i^*$ are, respectively, the left and right endpoints of $I_n^{(i)}$.

	Since $G_i(x_i^*)-h_i(x_i^*)=0-h_i(x_i^*)\le 0$ and $G_i(y_i^*)-h_i(y_i^*)=1-h_i(y_i^*)\ge 0$, there exists a point $z_i\in I_n^{(i)}$ such that
	\[G_i(z_i)-h_i(z_i)=0\quad\Longleftrightarrow\quad G_i(z_i)=h_i(z_i).\]
	Let $\ba=(h_1(z_1),\dots,h_d(z_d))$. Suppose that
	\[\bx\notin \Big(T_{\beta_1}^n\times\cdots\times T_{\beta_d}^n|_{\prod_{i=1}^{d}I_{n,\beta_i}(\be_n^i)}\Big)^{-1}H_d\big(\ba,2^d\psi(n)\big),\]
	or equivalently
	\[\bx\in\prod_{i=1}^{d}I_{n,\beta_i}(\be_n^i)\quad\text{but}\quad (T_{\beta_1}^n\times\cdots\times T_{\beta_d}^n)\bx\notin H_d\big(\ba,2^d\psi(n)\big).\]
	Using $G_i|_{I_{n,\beta_i}(\be_n^i)}=T_{\beta_i}^n|_{I_{n,\beta_i}(\be_n^i)}$ and $G_i(z_i)=h_i(z_i)$, we have
	\begin{equation}\label{eq:2dpsi}
		2^d\psi(n)<\prod_{i=1}^d|G_i(x_i)-h_i(z_i)|=\prod_{i=1}^d|G_i(x_i)-G_i(z_i)|=\prod_{i=1}^d\beta_i^n|x_i-z_i|.
	\end{equation}
	On the other hand, since $\beta_d^n\ge 2L$ we get $\beta_i^n-L\ge \beta_i^n/2$ for all $1\le i\le d$. Hence,
	\[\begin{split}
		\prod_{i=1}^{d}|T_{\beta_i}^nx_i-h_i(x_i)|&=\prod_{i=1}^{d}|G_i(x_i)-h_i(x_i)|=\prod_{i=1}^{d}|G_i(x_i)-G_i(z_i)+h_i(z_i)-h_i(x_i)|\\
		&\ge \prod_{i=1}^{d}\big(|G_i(x_i)-G_i(z_i)|-|h_i(z_i)-h_i(x_i)|\big)\\
		&\ge \prod_{i=1}^{d}\big(\beta_i^n|x_i-z_i|-L|z_i-x_i|\big)\ge \prod_{i=1}^{d}\beta_i^n/2|x_i-z_i|\\
		&>\psi(n),
	\end{split}\]
	where the last inequality follows from \eqref{eq:2dpsi}. This is means that
	\[\bx\notin\bigg\{\bx\in\prod_{i=1}^{d}I_{n,\beta_i}(\be_n^i):\prod_{i=1}^d|T_{\beta_i}^nx_i-h_i(x_i)|<\psi(n)\bigg\},\]
	which completes the proof.
\end{proof}
We now move on to the task of proving the convergence part of Theorem \ref{t:multiplicative}. The proof follows the ideas in \cite[\S 2.1]{HS18} and \cite[Appendix A]{LLVZ23} closely.
\begin{prop}
	Let $1<\beta_1\le\cdots\le \beta_d$. Let $f$ be a dimension function such that $(d-1)\prec f$ and $f\preceq s$ for some $s\in(d-1,d)$. Then,
	\[\sum_{n=1}^{\infty}\beta_d^{dn}\psi(n)^{-d+1}f\big(\beta_d^{-n}\psi(n)\big)<\infty\quad\Longrightarrow\quad\hm^f\big(W_d^\times(\psi,\bm h)\big)=0.\]
\end{prop}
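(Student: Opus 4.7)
\medskip
\noindent\textit{Proof proposal.} The plan is to build a nested sequence of covers of $W_d^\times(\psi,\bm h)$ that refines the cylinder partition at each level and then apply the hyperboloid covering lemma of Hussain--Simmons together with the doubling-type inequality supplied by $f\preceq s$. First I would write
\[W_d^\times(\psi,\bm h)\subset\bigcap_{N\ge 1}\bigcup_{n\ge N}\bigcup_{(\be_n^1,\dots,\be_n^d)}\Bigl\{\bx\in\prod_{i=1}^d I_{n,\beta_i}(\be_n^i):\prod_{i=1}^d|T_{\beta_i}^nx_i-h_i(x_i)|<\psi(n)\Bigr\},\]
where the union is over all tuples of $\beta_i$-admissible sequences. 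By Lemma \ref{l:renyi} the total number of such tuples at level $n$ is $\asymp\prod_{i=1}^d\beta_i^n$.

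Next, for each tuple I would invoke Lemma \ref{l:inversehd} to produce a centre $\ba\in[0,1]^d$ so that the piece inside $\prod_{i=1}^d I_{n,\beta_i}(\be_n^i)$ is contained in the preimage of the hyperboloid $H_d(\ba,2^d\psi(n))$ under the product map $T_{\beta_1}^n\times\cdots\times T_{\beta_d}^n$. Apply Lemma \ref{l:HScovering} with $s\in(d-1,d)$ as in the hypothesis to obtain a ball covering $\cb$ of $H_d(\ba,2^d\psi(n))$ with each $|B|\ge 2^d\psi(n)$ and $\sum_{B\in\cb}|B|^s\ll\psi(n)^{s-d+1}$. Since $T_{\beta_i}^n$ restricted to $I_{n,\beta_i}(\be_n^i)$ is linear with slope $\beta_i^n$, the preimage of a ball $B$ of diameter $|B|=\ell\ge\psi(n)$ is a hyperrectangle with side lengths $\ell\beta_1^{-n},\dots,\ell\beta_d^{-n}$. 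Its smallest side is $\ell\beta_d^{-n}$, so it can be covered by
\[\ll\prod_{i=1}^{d}\frac{\ell\beta_i^{-n}}{\ell\beta_d^{-n}}=\frac{\beta_d^{dn}}{\prod_{i=1}^d\beta_i^n}\]
balls of diameter $\ell\beta_d^{-n}$.

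Now the key step: use $f\preceq s$, which gives $f(\ell\beta_d^{-n})\le f(\psi(n)\beta_d^{-n})(\ell/\psi(n))^s$ because $\ell\beta_d^{-n}\ge\psi(n)\beta_d^{-n}$. Summing the $f$-mass over all small balls produced above, and then over $B\in\cb$, the contribution from one cylinder tuple is
\[\ll\frac{\beta_d^{dn}}{\prod_{i=1}^d\beta_i^n}\cdot\frac{f(\beta_d^{-n}\psi(n))}{\psi(n)^s}\sum_{B\in\cb}|B|^s\ll\frac{\beta_d^{dn}}{\prod_{i=1}^d\beta_i^n}\cdot\psi(n)^{-d+1}f(\beta_d^{-n}\psi(n)).\]
Multiplying by the number $\asymp\prod_{i=1}^d\beta_i^n$ of tuples and summing over $n\ge N$ yields
\[\hm^f\bigl(W_d^\times(\psi,\bm h)\bigr)\ll\limsup_{N\to\infty}\sum_{n=N}^\infty\beta_d^{dn}\psi(n)^{-d+1}f(\beta_d^{-n}\psi(n))=0\]
under the convergence hypothesis, once one checks that the diameters $\ell\beta_d^{-n}$ of the covering balls tend to zero as $n\to\infty$ (automatic since $\beta_d^{dn}\psi(n)^{-d+1}f(\beta_d^{-n}\psi(n))\to 0$ forces $\beta_d^{-n}$ to dominate).

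The main technical obstacle is the correct bookkeeping when re-covering the preimage hyperrectangles by balls: one must use that $\ell\ge\psi(n)$ (the lower bound from Lemma \ref{l:HScovering}) together with the monotonicity property of $f(r)/r^s$ to trade $f$-mass against the $s$-content produced by the hyperboloid covering. Everything else is routine cylinder accounting via Lemmas \ref{l:renyi} and \ref{l:length}, with the Lipschitz hypothesis used only to justify Lemma \ref{l:inversehd}.
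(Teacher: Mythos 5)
Your proposal is correct and follows essentially the same route as the paper: decompose by cylinder tuples, pass to the hyperboloid preimages via Lemma \ref{l:inversehd}, cover each hyperboloid by the Hussain--Simmons Lemma \ref{l:HScovering}, pull back to hyperrectangles recovered by $\asymp\beta_d^{dn}/\prod_i\beta_i^n$ balls of radius $\beta_d^{-n}|B|$, and trade $f$-mass for $s$-content using $f\preceq s$ and $|B|\ge\psi(n)$. The only cosmetic difference is your closing remark about the mesh of the covers, which is immediate since $|B|\le 1$ gives covering diameters $\ll\beta_d^{-N}\to 0$.
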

\begin{proof}
	By Lemma \ref{l:inversehd},
	\[\begin{split}
		W_d^\times(\Psi,\bm h)&=\bigcap_{N=1}^\infty\bigcup_{n=N}^\infty\bigcup_{\be_n^1\in\Sigma_{\beta_1}^n}\cdots\bigcup_{\be_n^d\in\Sigma_{\beta_d}^n}\bigg\{\bx\in\prod_{i=1}^{d}I_{n,\beta_i}(\be_n^i):\prod_{i=1}^d|T_{\beta_i}^nx_i-h_i(x_i)|<\psi(n)\bigg\}\\
		&\subset \bigcap_{N=1}^\infty\bigcup_{n=N}^\infty\bigcup_{\be_n^1\in\Sigma_{\beta_1}^n}\cdots\bigcup_{\be_n^d\in\Sigma_{\beta_d}^n}\Big(T_{\beta_1}^n\times\cdots\times T_{\beta_d}^n|_{\prod_{i=1}^{d}I_{n,\beta_i}(\be_n^i)}\Big)^{-1}H_d\big(\ba,2^d\psi(n)\big),
	\end{split}\]
	where $\ba$ depends on $\be_n^i$, $1\le i\le d$.
	Apply Lemma \ref{l:HScovering} with $\delta=2^d\psi(n)$, for any $s\in(d-1,d)$ there exists a covering $\cb_n$ of the hyperboloid $H_d(\ba,2^d\psi(n))$ by balls $B$ such that
	\begin{equation}\label{eq:coverofHd}
		|B|\ge 2^d\psi(n)\quad\text{and}\quad\sum_{B\in\cb_n}|B|^s\ll\psi(n)^{s-d+1}.
	\end{equation}
	It follows that for any $\be_n^i\in\Sigma_{\beta_i}^n$ with $1\le i\le d$,
	\[\Big(T_{\beta_1}^n\times\cdots\times T_{\beta_d}^n|_{\prod_{i=1}^{d}I_{n,\beta_i}(\be_n^i)}\Big)^{-1}H_d\big(\ba,2^d\psi(n)\big)\subset \bigcup_{B\in\cb_n}\Big(T_{\beta_1}^n\times\cdots\times T_{\beta_d}^n|_{\prod_{i=1}^{d}I_{n,\beta_i}(\be_n^i)}\Big)^{-1}B.\]
	For any $B\in\cb_n$, $(T_{\beta_1}^n\times\cdots\times T_{\beta_d}^n|_{\prod_{i=1}^{d}I_{n,\beta_i}(\be_n^i)})^{-1}B$ is contained in a hyperrectangle with sidelengths
	\[\beta_1^{-n}|B|,\dots,\beta_d^{-n}|B|.\]
	Since $1<\beta_1\le\cdots\le \beta_d$, such hyperrectangle can be covered by
	\[\asymp \prod_{i=1}^{d}\frac{\beta_i^{-n}|B|}{\beta_d^{-n}|B|}=\prod_{i=1}^{d}\beta_i^{-n}\beta_d^{n}\]
	balls of radius $\beta_d^{-n}|B|$.
	By the definition of Hausdorff $f$-measure,
	\[\begin{split}
		\hm^f\big(W_d^\times(\psi,\bm h)\big)&\ll\limsup_{N\to\infty}\sum_{n=N}^\infty\sum_{\be_n^1\in\Sigma_{\beta_1}^n}\cdots\sum_{\be_n^d\in\Sigma_{\beta_d}^n}\sum_{B\in\cb_n}\bigg(\prod_{i=1}^{d}\beta_i^{-n}\beta_d^{n}\bigg)f(\beta_d^{-n}|B|)\\
		&\asymp\limsup_{N\to\infty}\sum_{n=N}^\infty\beta_d^{dn}\sum_{B\in\cb_n}f(\beta_d^{-n}|B|),
	\end{split}\]
	where we use $\#\Sigma_{\beta_i}^n\asymp\beta_i^n$ in the last inequality. Since $f\preceq s$ and $|B|>2^d\psi(n)>\psi(n)$, for large $n$ we have
	\[\frac{f(\beta_d^{-n}|B|)}{(\beta_d^{-n}|B|)^s} \le\frac{f\big(\beta_d^{-n}\psi(n)\big)}{\big(\beta_d^{-n}\psi(n)\big)^s} \quad\Longrightarrow\quad f(\beta_d^{-n}|B|)\le |B|^s\psi(n)^{-s}f\big(\beta_d^{-n}\psi(n)\big).\]
	Therefore, by the last inequality and \eqref{eq:coverofHd},
	\[\begin{split}
		\hm^f\big(W_d^\times(\psi,\bm h)\big)&\ll\limsup_{N\to\infty}\sum_{n=N}^\infty\beta_d^{dn}\sum_{B\in\cb_n}f(\beta_d^{-n}|B|)\\
		&\le\limsup_{N\to\infty}\sum_{n=N}^\infty\beta_d^{dn}\sum_{B\in\cb_n}|B|^s\psi(n)^{-s}f\big(\beta_d^{-n}\psi(n)\big)\\
		&\ll\limsup_{N\to\infty}\sum_{n=N}^\infty\beta_d^{dn}\psi(n)^{s-d+1}\psi(n)^{-s}f\big(\beta_d^{-n}\psi(n)\big)\\
		&=\limsup_{N\to\infty}\sum_{n=N}^\infty\beta_d^{dn}\psi(n)^{-d+1}f\big(\beta_d^{-n}\psi(n)\big)=0,
	\end{split}\]
	where the last inequality follows from  $\sum_{n=1}^\infty\beta_d^{dn}\psi(n)^{-d+1}f\big(\beta_d^{-n}\psi(n)\big)<\infty$.
\end{proof}
\subsection{Proof of Theorem \ref{t:multiplicative} (2): Divergence part}
The proof will proceed using the following ``Slicing Lemma" for Hausdorff measures (see, for example, \cite[Lemma 4]{BV06b}).
\begin{lem}[Slicing Lemma]\label{l:slicing}
	Let $0<k<d$, and let $g$ be a dimension function and let $f(r)=r^kg(r)$. Let $A$ be a Borel subset of $[0,1)^d$ and suppose that the set
	\[\{\bx_1\in [0,1)^k:\hm^g(\{\bx_2\in [0,1)^{d-k}:(\bx_1,\bx_2)\in A\})=\infty\}\]
	has positive $\hm^k$-measure. Then $\hm^f(A)=\infty$.
\end{lem}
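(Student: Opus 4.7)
The plan is to prove the Slicing Lemma by a Fubini-type argument, exploiting the fact that in the maximum norm a ball in $\R^d$ is a cube whose projection onto the first $k$ coordinates is again a ball (a cube) of the same diameter. This clean product structure is exactly what makes the argument go through with $f(r)=r^kg(r)$.

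First I would argue by contradiction and assume $\hm^f(A)<\infty$. Fix $\delta>0$ and let $\{B_i\}$ be an arbitrary $\delta$-cover of $A$ by balls in $[0,1)^d$. Writing $B_i=B_i^{(1)}\times B_i^{(2)}$, where $B_i^{(1)}\subset[0,1)^k$ and $B_i^{(2)}\subset[0,1)^{d-k}$ are the coordinate projections, both projections are balls (cubes) with the same diameter $|B_i|$. For each $\bx_1\in[0,1)^k$, the fiber
\[A_{\bx_1}:=\{\bx_2\in[0,1)^{d-k}:(\bx_1,\bx_2)\in A\}\]
is therefore covered by $\{B_i^{(2)}:\bx_1\in B_i^{(1)}\}$, and each such $B_i^{(2)}$ has diameter at most $\delta$. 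This yields the pointwise slice estimate
\[\hm^g_\delta(A_{\bx_1})\le\sum_{i:\,\bx_1\in B_i^{(1)}}g(|B_i|).\]

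Next I would integrate this inequality against $\lm^k$ over $[0,1)^k$. Since $\lm^k(B_i^{(1)})=|B_i|^k$ and $r^kg(r)=f(r)$, Fubini--Tonelli gives
\[\int_{[0,1)^k}\hm^g_\delta(A_{\bx_1})\d\lm^k(\bx_1)\le\sum_i g(|B_i|)\,|B_i|^k=\sum_i f(|B_i|).\]
Taking the infimum over all $\delta$-covers $\{B_i\}$ on the right, the left-hand side (which depends only on $\delta$ and $A$) is bounded by $\hm^f_\delta(A)\le\hm^f(A)$. I would also need a brief remark that $\bx_1\mapsto\hm^g_\delta(A_{\bx_1})$ is $\lm^k$-measurable when $A$ is Borel, which follows because $\hm^g_\delta$ can be realised as a countable infimum of sums involving the sections, each of which is measurable in $\bx_1$ by standard arguments.

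Finally, letting $\delta\downarrow 0$ and applying the monotone convergence theorem (recall $\hm^g_\delta$ is non-decreasing as $\delta\downarrow 0$), I obtain
\[\int_{[0,1)^k}\hm^g(A_{\bx_1})\d\lm^k(\bx_1)\le\hm^f(A)<\infty.\]
By hypothesis, the set $E:=\{\bx_1\in[0,1)^k:\hm^g(A_{\bx_1})=\infty\}$ has positive $\hm^k$-measure, hence positive $\lm^k$-measure since on $\R^k$ the two measures are comparable. Then the left-hand side of the last display is $+\infty$, contradicting the bound. Thus $\hm^f(A)=\infty$, completing the proof. The main obstacle is the bookkeeping between $\hm^g_\delta$ and the chosen cover, together with the measurability of the slice functional; everything else is a direct application of Fubini combined with the product identity $f(r)=r^kg(r)$, which is what makes the max-norm hypothesis natural here.
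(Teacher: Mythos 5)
Your argument is correct, and in fact the paper does not prove this lemma at all: it is quoted directly from the literature (Lemma 4 of Beresnevich--Velani, ``Schmidt's theorem, Hausdorff measures, and slicing''), so there is no in-paper proof to compare against. What you wrote is essentially the standard covering/Fubini proof of that cited result, and it meshes well with this paper's conventions: since $[0,1]^d$ carries the maximum norm, a ball is a hypercube, its projection $B_i^{(1)}$ to the first $k$ coordinates is again a ball of the same diameter, and $\lm^k(B_i^{(1)}\cap[0,1)^k)\le |B_i|^k$, which is exactly what turns $\sum_i g(|B_i|)\lm^k(B_i^{(1)})$ into $\sum_i f(|B_i|)$ with $f(r)=r^kg(r)$. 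The chain ``fiber cover $\Rightarrow$ pointwise bound on $\hm^g_\delta(A_{\bx_1})$ $\Rightarrow$ integrate $\Rightarrow$ take infimum over covers $\Rightarrow$ let $\delta\downarrow 0$'' is sound, and the comparability of $\hm^k$ and $\lm^k$ on $\R^k$ finishes the contradiction.

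The only soft spot is the measurability of $\bx_1\mapsto\hm^g_\delta(A_{\bx_1})$, which you assert somewhat loosely; for Borel $A$ this is a genuinely delicate point (it is not obviously Borel). But it is also avoidable: your pointwise bound says $\hm^g_\delta(A_{\bx_1})\le\sum_i g(|B_i|)\mathbf{1}_{B_i^{(1)}}(\bx_1)$, and the right-hand side is manifestly measurable, so you can run the whole estimate with the upper (outer) integral of the left-hand side; for the final contradiction, replace monotone convergence by choosing a sequence $\delta_n\downarrow 0$, noting that the sets $\{\bx_1:\hm^g_{\delta_n}(A_{\bx_1})>M\}$ increase to a set of positive outer measure and using continuity from below of Lebesgue outer measure. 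With that minor repair (or a citation for the measurability of slice functions), your proof is complete and is the expected argument for this lemma.
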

We now establish the complementary divergence part for the Hausdorff measure of $W_d^\times(\psi,\bm h)$.
\begin{prop}\label{p:lowmultiplicative}
	Let $1<\beta_1\le\cdots\le \beta_d$. Let $f$ be a dimension function such that $(d-1)\prec f\preceq s$ for some $s\in(d-1,d)$. Then,
	\[\sum_{n=1}^{\infty}\beta_d^{dn}\psi(n)^{-d+1}f\big(\beta_d^{-n}\psi(n)\big)=\infty\quad\Longrightarrow\quad\hm^f\big(W_d^\times(\psi,\bm h)\big)=\hm^f([0,1]^d).\]
\end{prop}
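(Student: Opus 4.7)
The plan is to reduce the problem to the one-dimensional dichotomy law in Theorem \ref{t:multiplicative} (1) by way of the Slicing Lemma. First observe that, since $|T_{\beta_i}^nx_i-h_i(x_i)|<1$ holds for every $1\le i\le d-1$ (outside a Lebesgue-null set of endpoint cases), we have the inclusion
\[[0,1)^{d-1}\times W_1(\psi,h_d)\subset W_d^\times(\psi,\bm h).\]
Consequently it suffices to prove that the product set on the left has infinite Hausdorff $f$-measure.

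The correct auxiliary dimension function for the slicing step is $g(r):=f(r)/r^{d-1}$. The assumption $(d-1)\prec f\preceq s$ with $s\in(d-1,d)$ makes $g$ a genuine dimension function: $g(r)\to 0$ as $r\to 0^+$ follows from $(d-1)\prec f$; the monotonicity of $g$ reflects $(d-1)\preceq f$; and the comparison $f\preceq s\le d$ yields $g\preceq 1$, which is the prerequisite for invoking Theorem \ref{t:multiplicative} (1). Moreover $g\prec 1$, because
\[\frac{g(r)}{r}=\frac{f(r)}{r^d}\gg r^{s-d}\to\infty\qquad\text{as $r\to 0^+$,}\]
which forces $\hm^g([0,1])=\infty$. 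A direct algebraic check gives the identity
\[\beta_d^n\,g(\beta_d^{-n}\psi(n))=\beta_d^{dn}\,\psi(n)^{-(d-1)}\,f(\beta_d^{-n}\psi(n)),\]
so the divergence hypothesis for $W_d^\times(\psi,\bm h)$ is precisely the divergence hypothesis for $W_1(\psi,h_d)$ in the one-dimensional dichotomy with dimension function $g$. Applying Theorem \ref{t:multiplicative} (1) then yields $\hm^g(W_1(\psi,h_d))=\hm^g([0,1])=\infty$.

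Finally, apply the Slicing Lemma (Lemma \ref{l:slicing}) with $k=d-1$ and $f(r)=r^{d-1}g(r)$ to the Borel set $A:=[0,1)^{d-1}\times W_1(\psi,h_d)$. For every $\bx_1\in[0,1)^{d-1}$, the slice $\{x_d\in[0,1):(\bx_1,x_d)\in A\}$ equals $W_1(\psi,h_d)$ and hence has $\hm^g$-measure equal to $\infty$. Since this holds on a set of full (in particular positive) $\hm^{d-1}$-measure, the Slicing Lemma delivers $\hm^f(A)=\infty$, and therefore $\hm^f\big(W_d^\times(\psi,\bm h)\big)=\infty$. On the other hand, $f\preceq s<d$ forces $f\prec d$ (again because $f(r)/r^d\gg r^{s-d}\to\infty$), so $\hm^f([0,1]^d)=\infty$ as well; combining the two identities gives $\hm^f(W_d^\times(\psi,\bm h))=\hm^f([0,1]^d)$. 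The only conceptual step in the argument is the identification of the right auxiliary dimension function $g(r)=f(r)/r^{d-1}$; once this is in place, Theorem \ref{t:multiplicative} (1) does all the one-dimensional work in the fibre and the Slicing Lemma assembles the result, so no real obstacle arises.
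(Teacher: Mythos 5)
Your proposal is correct and follows essentially the same route as the paper: the inclusion $[0,1)^{d-1}\times W_1(\psi,h_d)\subset W_d^\times(\psi,\bm h)$, the auxiliary dimension function $g(r)=f(r)/r^{d-1}$ fed into Theorem \ref{t:multiplicative} (1), and the Slicing Lemma with $k=d-1$. Your explicit verification that $g$ is a dimension function with $g\prec 1$ (and that $\hm^f([0,1]^d)=\infty$) is slightly more detailed than the paper's, but the argument is the same.
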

\begin{proof}
	Notice that
	\begin{align}
		W_d(\psi,\bm h)&\supset\{\bx\in[0,1)^d:|T_{\beta_d}^nx_d-h_d(x_d)|<\psi(n)\text{ for i.m.\,$n$}\}\notag\\
		&=[0,1)^{d-1}\times \{x_d\in[0,1):|T_{\beta_d}^nx_d-h_d(x_d)|<\psi(n)\text{ for i.m.\,$n$}\}.\label{eq:product}
	\end{align}
	With a slightly abuse of notation, write the bottom-right $\limsup$ set as $W_1(\psi,h)$. Let $g(r)=f(r)/r^{d-1}$. By the assumption $d-1\prec f\preceq s$ for some $s\in (d-1,d)$, $g$ is a dimension function such that $g\prec 1$. Since
	\[\sum_{n=1}^{\infty}\beta_d^ng\big(\beta_d^{-n}\psi(n)\big)=\sum_{n=1}^{\infty}\beta_d^n\cdot\frac{f\big(\beta_d^{-n}\psi(n)\big)}{\big(\beta_d^{-n}\psi(n)\big)^{d-1}}=\infty,\]
	by Theorem \ref{t:multiplicative} (1),
	\[\hm^g\big(W_1(\psi,h)\big)=\infty.\]
	Applying the Slicing Lemma with $k=d-1$, we deduce from \eqref{eq:product} that
	\[\hm^f\big(W_d(\psi,\bm h)\big)=\infty=\hm^f([0,1]^d).\qedhere\]
\end{proof}
%\subsection*{Acknowledgments}
%The author would like to thank Prof. Lingmin Liao for many useful suggestions.

\end{document}